\theoremstyle{plain} 
\newtheorem{theorem}{\indent\sc Theorem}[section]
\newtheorem{lemma}[theorem]{\indent\sc Lemma}
\newtheorem{corollary}[theorem]{\indent\sc Corollary}
\newtheorem{proposition}[theorem]{\indent\sc Proposition}
\newtheorem{claim}[theorem]{\indent\sc Claim}
\theoremstyle{definition} 
\newtheorem{definition}[theorem]{\indent\sc Definition}
\newtheorem{remark}[theorem]{\indent\sc Remark}
\newtheorem{example}[theorem]{\indent\sc Example}
\begin{document}

\title[Toward a higher codimensional Ueda theory]
{Toward a higher codimensional Ueda theory} 

\author[T. Koike]{Takayuki Koike} 

\subjclass[2010]{ 
Primary 32J25; Secondary 14C20. 
}
%
\keywords{ 
Flat line bundles, Ueda's theory, the blow-up of the three dimensional projective space at eight points. 
}
\address{
Graduate School of Mathematical Sciences, The University of Tokyo \endgraf
3-8-1 Komaba, Meguro-ku, Tokyo, 153-8914 \endgraf
Japan
}
\email{tkoike@ms.u-tokyo.ac.jp}

\maketitle

\begin{abstract}
Ueda's theory is a theory on a flatness criterion around a smooth hypersurface of a certain type of topologically trivial holomorphic line bundles. 
We propose a codimension two analogue of Ueda's theory. 
As an application, we give a sufficient condition for the anti-canonical bundle of the blow-up of the three dimensional projective space at $8$ points to be non semi-ample however admit a smooth Hermitian metric with semi-positive curvature. 
\end{abstract}

\section{Introduction}
Ueda's theory is a theory on a flatness criterion around a smooth hypersurface of a certain type of topologically trivial holomorphic line bundles. 
We propose a codimension two analogue of Ueda's theory.  
Namely we shall describe a sufficient condition for the line bundle $\mathcal{O}_X(S)$ to be flat on a neighborhood of $C$ in $X$, where $S$ is a smooth hypersurface of a complex manifold $X$ and $C$ is a compact smooth hypersurface of $S$. 

Let us recall briefly Ueda's theory. 
Let $S$ be a smooth compact hypersurface of a complex manifold $X$ with the topologically trivial normal bundle $N_{S/X}$. 
Then $\mathcal{O}_X(S)$ is topologically trivial on a tubular neighborhood of $S$ in $X$. 
For such a pair $(S, X)$, Ueda defined an obstruction class $u_n(S, X)\in H^1(S, N_{S/X}^{-n})$ for each $n\geq 1$, which enjoys the property that $\mathcal{O}_X(S)$ is not flat around $S$ if there exists an integer $n\geq 1$ such that $u_n(S, X)\not=0$ (\cite{U}, \cite{N}, see also \S 2.1 here). 
By using these obstruction classes, he gave a sufficient condition for $\mathcal{O}_X(S)$ to be flat around $S$ as follows. 
Let $\mathcal{E}_0(S)$ be the set of the all torsion elements of ${\rm Pic}^0(S)$. 
Fix an invariant distance $d$ of ${\rm Pic}^0(S)$ (i.e. $d$ is a distance of ${\rm Pic}^0(S)$ such that, for each $E_1, E_2$, and $G\in {\rm Pic}^0(S)$, $d(E_1^{-1}, E_2^{-1})=d(E_1, E_2)$ and  $d(E_1\otimes G, E_2\otimes G)=d(E_1, E_2)$ hold). 
By using this distance $d$, define 
\[
\mathcal{E}_1(S):=\{E\in{\rm Pic}^0(S)\mid \exists \alpha\in\mathbb{R}_{>0}\ {\rm s.t.}\ \forall n\in\mathbb{Z}_{>0},\ d(\mathcal{O}_C, E^n)\geq (2n)^{-\alpha}\}. 
\]
Clearly this definition of the set $\mathcal{E}_1(S)$ does not depend on the choice of an invariant distance $d$. 
Note that the Lebesgue measure of ${\rm Pic}^0(S)\setminus \mathcal{E}_1(S)$ is zero, however $\mathcal{E}_1(S)$ is the union of countably many nowhere dense closed subsets of ${\rm Pic}^0(S)$. 
Ueda showed that $\mathcal{O}_X(S)$ is flat around $S$ 
when $N_{S/X}$ is an element of $\mathcal{E}_0(S)\cup\mathcal{E}_1(S)$ and $u_n(S, X)=0$ holds for each $n\geq 1$ (\cite[Theorem 3]{U}, see also Theorem \ref{thm:ueda_3} here). 

Now let us return to our subject. 
Let $X$ be a complex manifold, $S$ a smooth hypersurface of $X$, and $C$ be a smooth compact hypersurface of $S$. 
Assume that the normal bundle $N_{S/X}$ is flat around $C$. 
For such a triple $(C, S, X)$, we define a new obstruction class $u_{n, m}(C, S, X)\in H^1(C, N_{S/X}|_C^{-n}\otimes N_{C/S}^{-m})$ for each $n\geq 1, m\geq 0$ as a codimension two version of Ueda's obstruction classes (see \S 3 for the definition of $u_{n, m}(C, S, X)$). 
These new obstruction classes enjoy the property that $\mathcal{O}_X(S)$ is not flat around $C$ if there exists a pair of integers $n\geq 1$ and $m\geq 0$ such that $u_{n, m}(C, S, X)\not=0$.
By using these obstruction classes, we can describe a sufficient condition for $\mathcal{O}_X(S)$ to be flat around $C$ as follows. 

\begin{theorem}\label{main}
Let $X$ be a complex manifold, $S$ a smooth hypersurface of $X$, and $C$ be a smooth compact K\"ahler hypersurface of $S$ such that $N_{S/X}|_V$ is flat, where $V$ is a sufficiently small neighborhood of $C$ in $S$. 
Assume one of the following three conditions holds: 
$(i)$ $N_{C/S}\in\mathcal{E}_0(C)$ and $N_{S/X}|_C\in\mathcal{E}_0(C)$, 
$(ii)$ $N_{C/S}=N_{S/X}|_C\in\mathcal{E}_1(C)$, 
$(iii)$ $N_{S/X}|_C\in\mathcal{E}_0(C)$ and there exists a strongly $1$-convex neighborhood $V$ of $C$ in $S$ such that $C$ is the maximal compact analytic subset of $V$. 
Further assume that $u_{n, m}(C, S, X)=0$ holds for all  $n\geq 1, m\geq 0$. 
Then there exists a neighborhood $W$ of $C$ in $X$ such that $\mathcal{O}_X(S)|_W$ is flat. 
\end{theorem}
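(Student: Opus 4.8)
The plan is to adapt Ueda's strategy to the two-step situation $C\subset S\subset X$. Rather than handling $\mathcal{O}_X(S)$ directly, I would construct a system of local defining functions of $S$ on a neighborhood of $C$ in $X$ whose ratios are locally constant; the existence of such a system is equivalent to flatness of $\mathcal{O}_X(S)$ there. Fix a finite open cover $\{U_j\}$ of a neighborhood of $C$ in $X$ with coordinates for which $S\cap U_j=\{w_j=0\}$ and, on the $U_j$ meeting $C$, $C\cap U_j=\{w_j=z_j=0\}$. Using flatness of $N_{S/X}|_V$, I first arrange that the transition functions $t_{jk}:=w_j/w_k$ of $\mathcal{O}_X(S)$ satisfy $t_{jk}\equiv c_{jk}$ modulo $I_S:=(w_j)$ with $c_{jk}$ locally constant. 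The induction is run on the two-step filtration obtained by first taking powers of $I_S$ --- with $I_S^n/I_S^{n+1}$ restricting over $V$ to $N_{S/X}^{-n}|_V$ --- and then, over $V$, the powers of the ideal of $C$; its graded pieces are indexed by pairs $(n,m)$ with $n\geq 1$, $m\geq 0$ and are isomorphic to $N_{S/X}|_C^{-n}\otimes N_{C/S}^{-m}$.

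For the formal step, suppose $\{w_j\}$ has been chosen so that $\{t_{jk}\}$ is locally constant modulo the step indexed by $(n,m)$. The leading discrepancy is then a \v{C}ech $1$-cocycle with values in $N_{S/X}|_C^{-n}\otimes N_{C/S}^{-m}$ whose class is $u_{n,m}(C,S,X)$ --- this is exactly the computation behind the definition in \S 3. Since $u_{n,m}(C,S,X)=0$ for all $n\geq 1$, $m\geq 0$, the cocycle is a coboundary $\{f_j-f_k\}$ with $f_j$ in the $(n,m)$-th step, and replacing $w_j$ by $(1+f_j)w_j$ absorbs it without disturbing the agreement already achieved at lower order. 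Iterating over all $(n,m)$, ordered by $n+m$, yields a \emph{formal} solution: defining functions whose ratios are locally constant to infinite order along $C$.

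The main obstacle is to promote this to a convergent solution, and this is where the three hypotheses enter, each reproducing one of Ueda's convergence mechanisms. In all cases one must bound the solution operators of the $\bar\partial$-equations (equivalently, of the \v{C}ech coboundary) on $C$ for the bundles $N_{S/X}|_C^{-n}\otimes N_{C/S}^{-m}$; here the K\"ahler hypothesis on $C$ is used, via Hodge theory, to represent these $H^1$-groups and to estimate the operators in terms of the distance in $\mathrm{Pic}^0(C)$ from $N_{S/X}|_C^{-n}\otimes N_{C/S}^{-m}$ to $\mathcal{O}_C$. In case $(ii)$ the equality $N_{C/S}=N_{S/X}|_C=:E$ collapses the double index to a single weight $k=n+m$, the bundle at step $(n,m)$ is $E^{-k}$, and the inequality $d(\mathcal{O}_C,E^k)\geq(2k)^{-\alpha}$ defining $\mathcal{E}_1(C)$ gives only a polynomial-in-$k$ loss; running Ueda's Siegel-type estimate --- with the cover and coordinates chosen so that the geometric growth of the nonlinear terms dominates that loss --- gives convergence on a fixed $W$. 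This is the most delicate case, and the equality of the two normal bundles is precisely what makes the small-denominator analysis a single one-parameter problem.

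In case $(i)$, $N_{S/X}|_C$ and $N_{C/S}$ generate a finite subgroup of $\mathrm{Pic}^0(C)$, so only finitely many bundles occur, the solution operators are uniformly bounded, and the series converges on a fixed neighborhood by a majorant estimate (equivalently, one pulls back to the finite \'etale cover trivializing both normal bundles and reduces to the trivial case). In case $(iii)$, the strong $1$-convexity of $V$ with $C$ as its maximal compact analytic subset lets the $C$-normal expansion be controlled directly on $V$: such a $V$ is a proper modification of a Stein space, so $H^1(V,\mathcal{F})$ is finite-dimensional and injects into its completion along $C$, whence the vanishing of the graded obstructions $u_{n,m}(C,S,X)$ forces the obstructions over $V$ to vanish; since $N_{S/X}|_V$ is flat with torsion restriction to $C$ it has finite order, and after passing to the cyclic \'etale cover trivializing it the remaining problem is the one-variable, trivial-normal-bundle case, again finished by a majorant estimate. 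In every case, once convergent defining functions with locally constant ratios are produced on a neighborhood $W$ of $C$ in $X$, $\mathcal{O}_X(S)|_W$ is flat by definition, which is the assertion.
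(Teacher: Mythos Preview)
Your outline is correct and matches the paper's strategy: build local defining functions of $S$ with constant ratios by a formal-then-convergent scheme, using the vanishing of $u_{n,m}$ for the formal step and the three hypotheses to bound the coboundary solution operators. Two points of comparison are worth flagging. First, the induction in the paper runs in the \emph{lexicographic} order on $(n,m)$ (first $n$, then $m$), not by $n+m$; this is the order under which $u_{n,m}$ is well-defined (a ``system of order $(n,m)$'' in \S3 means all lex-lower coefficients vanish), and it guarantees that the order-$(n,m)$ correction only creates lex-higher terms --- the total weight $n+m$ enters only in the small-denominator estimate of case~$(ii)$, where the paper bounds the two-variable majorant $A(X,Y)$ by its diagonal $A(X,X)$ and then invokes Siegel's lemma. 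Second, in case~$(iii)$ the paper does not pass to an \'etale cover. It works directly on $V$: Rossi's theorem (Proposition~2.6) promotes the vanishing of all $u_{n,m}$ to vanishing of the actual obstruction class in $H^1(V,N_{S/X}|_V^{-n})$, and a Kodaira--Spencer-type bound on $V$ (Lemma~2.7) supplies constants $K(N_{S/X}|_V^{-n})$; the torsion hypothesis on $N_{S/X}|_C$ makes only finitely many of these occur, giving a uniform $K$ and a one-variable majorant $A(X)$. Your cyclic-cover reduction to the trivial-normal-bundle case also works (flatness descends by averaging the pulled-back flat metric over the Galois group), and has the virtue of needing the bound for only one bundle, but it is a genuine variation on what the paper does.
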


Note that, when $C$ is a curve, the condition $(iii)$ above is equivalent to the condition that 
$N_{C/S}$ is negative and $N_{S/X}|_C\in\mathcal{E}_0(C)$ (see \S 2.2 here for the details). 
We will prove Theorem \ref{main} by considering a codimension two analogue of the argument used in the proof of Ueda's theorem \cite[Theorem 3]{U}. 

Let us explain our motivation here. 
Let $X$ be a smooth projective manifold and $S$ be a hypersuface of $X$ such that the line bundle $\mathcal{O}_X(S)$ is nef. 
Our original interest is the existence (or non-existence) of smooth Hermitian metrics on $\mathcal{O}_X(S)$ with semi-positive curvature. 
When the base locus $C:=\mathbb{B}(S)$ of the linear system $|S|$ is a hypersurface of $X$, we gave some sufficient conditions for $\mathcal{O}_X(S)$ to (or not to) admit a smooth Hermitian metric with semi-positive curvature 
by considering some flatness criteria for $\mathcal{O}_X(S)$ around $C$ in \cite{K2}, \cite{K3} (Note that \cite{K3} is essentially based on Ueda's theory). 
Especially, in \cite[3.4]{K2}, we showed that $\mathcal{O}_X(S)$ admits a smooth Hermitian metric with semi-positive curvature when $\mathcal{O}_X(S)$ is flat around $C$. 
Now let us consider the case where the codimension of $C$ is greater than one. 
In this case, we can also apply the same argument as in \cite[3.4]{K2} when $\mathcal{O}_X(S)$ is flat around $C$ (see the proof of Corollary \ref{main_cor} in \S5.2 here). 
Thus we need a flatness criterion for $\mathcal{O}_X(S)$ around $C$, 
which is the motivation for considering the situation as in Theorem \ref{main}. 

One of the most important applications of Ueda's theory to algebro-geometric situations is on the semi-positivity of  the anti-canonical bundle of the blow-up of $\mathbb{P}^2$ at $9$ points (\cite{B} and \cite{U}, see also \S 5 here). 
As an analogue, the following corollary follows from Theorem \ref{main}. 

\begin{corollary}\label{main_cor}
Let $C_0\subset\mathbb{P}^3$ be a complete intersection of two quadric surfaces of $\mathbb{P}^3$ and let $p_1, p_2, \dots, p_8\in C_0$ be $8$ points different from each other. 
Assume $\mathcal{O}_{\mathbb{P}^3}(-2)|_{C_0}\otimes \mathcal{O}_{C_0}(p_1+p_2+\dots+p_8)\in \mathcal{E}_1(C_0)$. 
Then the anti-canonical bundle of the blow-up of $\mathbb{P}^3$ at $\{p_j\}_{j=1}^8$ is not semi-ample, however admits a smooth Hermitian metric with semi-positive curvature. 
\end{corollary}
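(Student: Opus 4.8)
The plan is to apply Theorem \ref{main} to a triple $(C,S,X)$ built from the pencil of quadrics through $\{p_j\}_{j=1}^8$, and then to deduce the two assertions from Theorem \ref{main} and from the argument of \cite[3.4]{K2} in its higher codimensional form. Write $\pi\colon X\to\mathbb{P}^3$ for the blow-up, $E_j=\pi^{-1}(p_j)$, and $H=\pi^*\mathcal{O}_{\mathbb{P}^3}(1)$. Then $-K_X=4H-2\sum_jE_j=L^{\otimes 2}$, where $L:=2H-\sum_jE_j$, and $H^0(X,L)=H^0(\mathbb{P}^3,\mathcal{I}_{\{p_j\}}(2))$. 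Since $C_0$ is an elliptic normal quartic, the hypothesis forces the degree $0$ line bundle $N:=\mathcal{O}_{\mathbb{P}^3}(2)|_{C_0}\otimes\mathcal{O}_{C_0}(-p_1-\cdots-p_8)$ to be non-torsion (its inverse is the element assumed to lie in $\mathcal{E}_1(C_0)$, and $\mathcal{E}_1$ contains no torsion element); hence $\mathcal{O}_{C_0}(2)\otimes\mathcal{O}_{C_0}(-\sum p_j)$ has no sections, so the evaluation map shows $H^0(\mathbb{P}^3,\mathcal{I}_{\{p_j\}}(2))$ is exactly the pencil $\langle Q_1,Q_2\rangle=H^0(\mathbb{P}^3,\mathcal{I}_{C_0}(2))$. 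Consequently $\mathbb{B}(|L|)$ is the strict transform $\widetilde{C_0}$ of $C_0$, which is isomorphic to $C_0$; I call it $C$.

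Next I would set $S:=\widetilde{Q}$, the strict transform of a general member $Q$ of $\langle Q_1,Q_2\rangle$. A general such $Q$ is a smooth quadric surface on which every $p_j$ is a smooth point, so $S=\mathrm{Bl}_{\{p_j\}}Q$ is a smooth projective surface with $\mathcal{O}_X(S)=L$, and $C\subset S$ is a smooth compact curve, hence a Kähler hypersurface of $S$. Then I would compute the two relevant normal bundles. Because $C_0$ is the complete intersection of $Q$ with a second quadric, $\mathcal{O}_Q(C_0)=\mathcal{O}_{\mathbb{P}^3}(2)|_Q$; pulling back along $S\to Q$ and twisting down by the points lying over the $p_j$ gives $N_{C/S}=\mathcal{O}_S(C)|_C=N$, while restricting $L$ to $C$ gives $N_{S/X}|_C=L|_C=N$ as well. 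Thus $N_{C/S}=N_{S/X}|_C=N\in\mathcal{E}_1(C)$ (using that $\mathcal{E}_1$ is stable under $E\mapsto E^{-1}$). The same computation performed on $S$ rather than on $C$ — or, more directly, the identity $C=S\cap S'$ for a second general member $S'$ of $|L|$ — shows $N_{S/X}=\mathcal{O}_X(S)|_S\cong\mathcal{O}_S(C)$ as line bundles on $S$.

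The key point is that all obstruction classes vanish for cohomological reasons. For every $k\geq 1$ the bundle $N^{-k}$ has degree $0$ and is non-trivial, so $H^1(C,N^{-k})\cong H^0(C,N^{k})^{*}=0$ by Serre duality on the elliptic curve $C$. Hence Ueda's classes $u_n(C,S)\in H^1(C,N_{C/S}^{-n})=H^1(C,N^{-n})$ vanish for all $n\geq 1$; combined with $N_{C/S}\in\mathcal{E}_1(C)$, Ueda's theorem (\cite[Theorem 3]{U}, i.e.\ Theorem \ref{thm:ueda_3}) gives that $\mathcal{O}_S(C)$, and therefore $N_{S/X}$, is flat on a neighbourhood $V$ of $C$ in $S$. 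Likewise $u_{n,m}(C,S,X)\in H^1(C,N_{S/X}|_C^{-n}\otimes N_{C/S}^{-m})=H^1(C,N^{-(n+m)})=0$ for all $n\geq 1$, $m\geq 0$. So all the hypotheses of Theorem \ref{main}, with case $(ii)$, are met, and there is a neighbourhood $W$ of $C$ in $X$ on which $\mathcal{O}_X(S)=L$ is flat; hence $-K_X=L^{\otimes 2}$ is flat on $W$.

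Finally I would deduce the two conclusions. For non semi-ampleness: for every $m\geq 1$ the restriction $(-mK_X)|_C=N^{\otimes 2m}$ is a non-trivial degree $0$ bundle on the elliptic curve $C$, so $H^0(C,(-mK_X)|_C)=0$; thus every section of $-mK_X$ vanishes along $C$, i.e.\ $C\subseteq\mathbb{B}(|{-mK_X}|)$ for all $m$, so $-K_X$ is not semi-ample. For the metric: $L$ is nef (it is base point free on $X\setminus C$, and $L\cdot\widetilde{C_0}=0$, $L\cdot\ell=1$ for a line $\ell\subset E_j$), hence so is $-K_X=2L$; since $-K_X$ is flat on the neighbourhood $W$ of its base locus, one glues, by a regularized maximum, the flat metric near $C$ with the semi-positively curved quotient metric $|\cdot|^2/(|Q_1|^2+|Q_2|^2)$ of $L$ over $X\setminus C$ exactly as in \cite[3.4]{K2} — which, as noted in the introduction, applies verbatim when the base locus has codimension greater than one — obtaining a smooth Hermitian metric on $-K_X$ with semi-positive curvature. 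I expect the only genuinely delicate part to be the second paragraph: pinning down $S$, $C=\mathbb{B}(|L|)$, and the normal bundle computations that yield $N_{C/S}=N_{S/X}|_C=N$; once that is in place, the vanishing of all obstruction classes is immediate and the rest is a matter of quoting Theorem \ref{main} and \cite[3.4]{K2}.
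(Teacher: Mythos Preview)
Your proposal is correct and follows essentially the same approach as the paper: you set up the triple $(C,S,X)$ with $S$ the strict transform of a quadric from the pencil, verify $N_{C/S}=N_{S/X}|_C=N\in\mathcal{E}_1(C)$, use the vanishing $H^1(C,N^{-k})=0$ on the elliptic curve to kill both Ueda's classes and the $u_{n,m}$, apply Theorem~\ref{thm:ueda_3} and then Theorem~\ref{main}$(ii)$, and finally glue the flat metric near $C$ with the quotient metric coming from the pencil via the regularized minimum/maximum argument of \cite[3.4]{K2}. Your write-up is in fact somewhat more explicit than the paper's (you spell out the base-locus computation and the non-semi-ampleness), but the strategy is identical.
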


Note that, when $\mathcal{O}_{\mathbb{P}^3}(-2)|_{C_0}\otimes \mathcal{O}_{C_0}(p_1+p_2+\dots+p_8)\in \mathcal{E}_0(C_0)$, 
the anti-canonical line bundle of the blow-up of $\mathbb{P}^3$ at $\{p_j\}_{j=1}^8$ is semi-ample, and thus it admits a smooth Hermitian metric with semi-positive curvature. 
When $\mathcal{O}_{\mathbb{P}^3}(-2)|_{C_0}\otimes \mathcal{O}_{C_0}(p_1+p_2+\dots+p_8)\not\in \mathcal{E}_0(C_0)$, 
the stable base locus of the anti-canonical line bundle is the strict transform of $C_0$ and thus it is not semi-ample (however it is nef). 
Note also that it is shown by Lesieutre and Ottem that the anti-canonical bundle $-K_X$ of the blow-up of $\mathbb{P}^3$ at very general $8$ points is a nef line bundle such that the set of curves $C$ with $-K_X. C=0$ is countably infinite \cite{LO}, 
and thus it gives an affirmative answer to the question of Totaro \cite{T}. 
For the details of this example, see \S 5 here. 

The organization of the paper is as follows. 
In \S 2, we will review Ueda's theory and give an explanation on some fundamental results which will be needed in the proof of Theorem \ref{main}. 
In \S 3, the obstruction class $u_{n, m}(C, S, X)$ will be defined for each $n\geq 1, m\geq 0$, and some fundamental properties of them will be shown. 
In \S 4, Theorem \ref{main} will be proven. 
In \S 5, some examples will be given and Corollary \ref{main_cor} will be proven. 

\vskip3mm
{\bf Acknowledgment. } 
The author would like to give heartful thanks to his supervisor Prof. Shigeharu Takayama whose comments and suggestions were of inestimable value for my study. 
He also thanks Prof. Tetsuo Ueda and Prof. Yoshinori Gongyo for helpful comments and warm encouragements. 
He is supported by the Grant-in-Aid for Scientific Research (KAKENHI No.25-2869) and the Grant-in-Aid for JSPS fellows. 
This work is supported by the Program for Leading Graduate
Schools, MEXT, Japan. 

\section{Preliminaries}
\subsection{A review of Ueda's theory}

Let $X$ be a complex manifold and $L$ be a holomorphic line bundle on $X$. 
We say that $L$ is a flat line bundle if each of the transition functions $t_{jk}\in \Gamma(U_{jk}, \mathcal{O}_{U_{jk}}^*)$ of $L$ can be taken as a complex constant with modulus $1$, 
where $\{U_j\}$ is a suitable open covering of $X$ and $U_{jk}:=U_j\cap U_k$. 
This condition is equivalent to the condition that $L$ can be regarded as an element of $H^1(X, U(1))\ (\subset H^1(X, \mathcal{O}_X^*))$, where $U(1):=\{z\in \mathbb{C}\mid |z|=1\}$. 
It can be said directly from the definition that a flat line bundle admits 
a metric which can locally be regarded as a constant function by using a suitable local frame of the line bundle (we call such a metric a flat metric). 
Since the (Chern) curvature tensor of a flat metric is $0$, 
it can be said that the first Chern class of a flat line bundle is trivial, 
and thus a flat line bundle is topologically trivial. 
When the manifold $X$ is compact and K\"ahler, it is known that the converse also holds (see \cite[\S 1]{U} for example). 
However, in general, a topologically trivial holomorphic line bundle need not admit the flat structure. 

Now let us start reviewing Ueda's theory along \cite[\S2]{U} for a complex manifold $X$ and a smooth hypersurface $S\subset X$ whose normal bundle $N_{S/X}$ is flat. 
Take a sufficiently fine open covering $\{V_j\}$ of $S$. 
From the assumption, 
$N_{C/X}=\{(V_{jk}, t_{jk})\}$
holds in $H^1(C, U(1))$ for some constants $t_{jk}\in U(1)$, where $V_{jk}:=V_j\cap V_k$. 
Let $W$ be a sufficiently small tubular neighborhood of $S$ in $X$ and $\{W_j\}$ be a sufficiently fine open covering of $W$. Without loss of generality, we may assume that the index sets of $\{V_j\}$ and $\{W_j\}$ coincide and $W_j\cap S=V_j$ holds. 
We choose local coordinates $(w_j, z_j)$ of $W_j$ satisfying the conditions that $z_j$ is a coordinate of $V_j$, $\{w_j=0\}=V_j$ holds on $W_j$, and that $(w_j/w_k)|_{V_{jk}}\equiv t_{jk}$ holds on $V_{jk}$ for all $j$ and $k$. 
Let $n$ be a positive integer. 
We say that a system $\{(W_j, w_j)\}$ is {\it of order $n$} if ${\rm mult}_{V_{jk}}(t_{jk}w_k-w_j)\geq n+1$ holds on each $W_{jk}$. When there exists a system $\{(W_j, w_j)\}$ of order $n$, the Taylor expansion of $t_{jk}w_k$ for the variable $w_j$ on $W_{jk}$ around $w_j=0$ can be written in the form 
\[
t_{jk}w_k=w_j+f_{jk}^{(n+1)}(z_k)\cdot w_j^{n+1}+O(w_j^{n+2})
\]
for some holomorphic function $f^{(n+1)}_{jk}$ defined on $V_{jk}$. 
Here we remark that, for all $m>n$, a system $\{(W_j, w_j)\}$ of order $m$ is also a system of order $n$ and in this case the function $f^{(m+1)}_{jk}$ is the constant function $0$. 
It is known that $\{(V_{jk}, f^{(n+1)}_{jk})\}$ satisfies the cocycle condition (\cite[p. 588]{U}, see also the proof of Proposition \ref{prop:cocycle} here). 

\begin{definition}
Suppose that there exists a system of order $n$. 
Then the cohomology class 
\[
u_n(S, X):=\{(V_{jk}, f^{(n+1)}_{jk})\}\in H^1(S, N_{S/X}^{-n})
\]
is called {\it the $n$-th Ueda class} of the pair $(S, X)$. 
\end{definition}

The $n$-th Ueda class does not depend on the choice of local coordinates system up to non-zero constant multiples (\cite[1.3]{N}, see also the proof of Proposition \ref{prop:u_nm_well-def} here). 
It is known that $u_n(S, X)=0$ if and only if there exists a system of order $n+1$. 
Thus only one phenomenon of the following occurs. 
\begin{itemize}
\item There exists an integer $n\in\mathbb{Z}_{>0}$ such that $u_m(S, X)$ can be defined only when $m\leq n$, $u_m(S, X)=0$ holds for all $m<n$, and $u_n(S, X)\not=0$ holds. 
\item For every integer $n\in\mathbb{Z}_{>0}$, $u_n(S, X)$ can be defined and it is equal to zero. 
\end{itemize}

\begin{definition}[{\cite[p. 589]{U}}]\label{type}
We denote ``${\rm type}\,(S, X)=n$'' and say that the pair $(S, X)$ is {\it of finite type} when $u_m(S, X)$ can be defined only when $m\leq n$, $u_m(S, X)=0$ holds for all $m<n$, and $u_n(S, X)\not=0$ holds.  
In the other case, we denote ``${\rm type}\,(S, X)=\infty$'' and say that the pair $(S, X)$ is {\it of infinite type}. 
\end{definition}

Ueda showed the following theorem. 

\begin{theorem}[{\cite[Theorem 3]{U}}]\label{thm:ueda_3}
Let $X$ be a complex manifold and $S$ be a smooth compact K\"ahler hypersurface of $X$. 
Assume that $N_{S/X}\in\mathcal{E}_0(S)\cup\mathcal{E}_1(S)$ and ${\rm type}\,(S, X)=\infty$. 
Then there exists a neighborhood $V$ of $S$ in $X$ such that the line bundle $\mathcal{O}_V(S)$ is flat. 
\end{theorem}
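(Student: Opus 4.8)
The plan is to upgrade the hypothesis ${\rm type}\,(S,X)=\infty$ into an honest flat structure on $\mathcal{O}_X(S)$ near $S$ by iteratively improving coordinate systems and then passing to a limit. Recall that $u_n(S,X)=0$ is equivalent to the existence of a system of order $n+1$; hence, inductively, there is a system $\{(W_j,w_j)\}$ of order $n$ for every $n\geq 1$. The goal is to produce holomorphic local defining functions $w_j^{(\infty)}$ of $S$ on a (possibly smaller) neighborhood $V$ such that $t_{jk}w_k^{(\infty)}=w_j^{(\infty)}$ holds \emph{exactly} on $V_{jk}$. Writing the tautological section of $\mathcal{O}_X(S)$ on $W_j$ as $w_j^{(\infty)}\cdot e_j$ for a local frame $e_j$, one then gets $e_j=(w_k^{(\infty)}/w_j^{(\infty)})\,e_k=t_{jk}^{-1}e_k$ with $t_{jk}^{-1}\in U(1)$, so $\mathcal{O}_V(S)$ is flat.

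The inductive step from order $n$ to order $n+1$ is governed by the Ueda class. Given a system of order $n$ one has $t_{jk}w_k=w_j+f^{(n+1)}_{jk}(z_k)\,w_j^{n+1}+O(w_j^{n+2})$, and $\{f^{(n+1)}_{jk}\}$ is a $1$-cocycle representing $u_n(S,X)\in H^1(S,N_{S/X}^{-n})$, which vanishes by assumption. So there are holomorphic functions $g_j$ on $V_j$ with $f^{(n+1)}_{jk}=t_{jk}^{-n}g_k-g_j$ on $V_{jk}$ (i.e. $\{g_j\}$ solves the \v{C}ech coboundary equation for $N_{S/X}^{-n}$), and then $w_j\mapsto w_j-g_j(z_j)\,w_j^{n+1}$, with $g_j$ extended independently of $w_j$ on $W_j$, is a system of order $n+1$. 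The quantitative heart of the matter is to bound the correction: on relatively compact subcharts $V_j'\Subset V_j$ one needs $\max_j\|g_j\|_{V_j'}\leq A_n\cdot\max_{jk}\|f^{(n+1)}_{jk}\|_{V_{jk}}$, where $A_n$ is the operator norm of a right inverse of the coboundary map on $1$-coboundaries valued in $N_{S/X}^{-n}$.

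Estimating $A_n$ is where the Kähler hypothesis and the condition $N_{S/X}\in\mathcal{E}_0(S)\cup\mathcal{E}_1(S)$ enter. Solving the coboundary equation with a fixed partition of unity gives a smooth solution whose $\bar\partial$ assembles into a global $\bar\partial$-closed, $\bar\partial$-exact $(0,1)$-form $\beta$ with values in $N_{S/X}^{-n}$; correcting to a holomorphic solution amounts to solving $\bar\partial u=\beta$, and since $S$ is compact Kähler, Hodge theory yields $\|u\|_{L^2}\leq\lambda_1(n)^{-1/2}\|\beta\|_{L^2}$, where $\lambda_1(n)>0$ is the smallest positive eigenvalue of the $\bar\partial$-Laplacian on $\mathcal{O}_S(N_{S/X}^{-n})$ (with elliptic constants uniform in $n$, the metric being flat), so $A_n\lesssim 1+\lambda_1(n)^{-1/2}$. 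When $N_{S/X}\in\mathcal{E}_0(S)$ the bundles $N_{S/X}^{-n}$ run through a finite set, so $A_n$ is bounded. When $N_{S/X}\in\mathcal{E}_1(S)$, a local computation on the Picard torus near the origin gives $\lambda_1(n)\gtrsim d(\mathcal{O}_S,N_{S/X}^n)^2\geq(2n)^{-2\alpha}$, so $A_n=O(n^{\alpha})$ grows only polynomially.

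Finally I would run a majorant argument. Trivialize each $W_j$ as $V_j'\times\{|w_j|<r\}$; performing the $n$-th correction $w_j\mapsto w_j-g_jw_j^{n+1}$ costs only a factor $(1-\varepsilon_n)$ in the polydisc radius, with $\varepsilon_n$ bounded by $A_n$ times a Cauchy estimate for $f^{(n+1)}_{jk}$, and the latter decays geometrically because each correction only alters terms of order $\geq n+1$ in $w_j$. One arranges the radii so that $\sum_n\varepsilon_n<\infty$; then the infinite composition of coordinate changes converges uniformly on $\bigcup_j V_j'\times\{|w_j|<r/2\}$ to holomorphic $w_j^{(\infty)}$ (still local defining functions of $S$, being close to the identity near $S$) with $t_{jk}w_k^{(\infty)}=w_j^{(\infty)}$, giving the flat neighborhood $V$. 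I expect the main obstacle to be precisely this convergence bookkeeping: balancing the polynomial growth of the small-denominator constants $A_n$ — the Siegel-type loss coming from $N_{S/X}$ being only Diophantine rather than torsion — against the geometric gain from passing to higher-order jets, uniformly over the finitely many charts and while controlling how the domains of holomorphy shrink. The compact Kähler hypothesis is needed exactly to secure the Hodge-theoretic bound on $A_n$.
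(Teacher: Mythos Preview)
The paper does not itself prove this theorem (it is quoted from \cite{U}), but Ueda's method is reproduced in the proof of Theorem~\ref{main}, so one can compare against \S4 together with Lemmata~\ref{lem:ueda_4} and~\ref{lem:siegel}. Your overall plan and your Hodge-theoretic route to $A_n=O(n^\alpha)$ are correct; the latter is essentially how Lemma~\ref{lem:ueda_4} is proved, and the paper simply quotes that lemma as a black box.

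The gap is in the convergence step. You claim the Cauchy estimate for $f^{(n+1)}_{jk}$ ``decays geometrically because each correction only alters terms of order $\geq n+1$'', but the $f^{(n+1)}_{jk}$ appearing at stage $n$ is not an original Taylor coefficient: it has been modified by all previous corrections $w_j\mapsto w_j-g_j^{(m)}w_j^{m+1}$, each of which feeds nonlinear terms controlled by $A_m\|f^{(m+1)}\|$ into all higher coefficients. The resulting recursion does not close when $A_m$ grows like $m^\alpha$; this is exactly the small-divisor problem, and summability of your $\varepsilon_n$ does not resolve it because each $\varepsilon_n$ already depends on the earlier ones through the modified coefficients. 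Ueda's fix, visible in \S4.1--4.2 here, is to abandon the infinite composition and instead posit a \emph{single} change of variables $w_j=u_j+\sum_{\nu\geq 2}F_j^{(\nu)}(z_j)\,u_j^\nu$, determine the $F_j^{(\nu)}$ inductively with bounds $|F_j^{(\nu)}|\leq A_\nu$, encode these bounds in a majorant functional equation for the generating series $A(X)=X+\sum_{\nu\geq 2} A_\nu X^\nu$, and then invoke Siegel's lemma (Lemma~\ref{lem:siegel}) to show $A(X)$ has positive radius of convergence. That lemma is the missing ingredient that absorbs the polynomial loss $n^\alpha$; without it, or an equivalent KAM-type mechanism, your iteration need not converge in the $\mathcal{E}_1$ case.
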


\begin{remark}
In \cite{U}, the above Theorem \ref{thm:ueda_3} is stated only for the case where $X$ is a surface. 
However, the proof of \cite[Theorem 3]{U} does not depends on the dimensions of $S$ and $X$, 
thus we obtain the above Theorem \ref{thm:ueda_3}. 
\end{remark}

\subsection{Some fundamental results}

In this subsection, we give some preliminary results needed in the proof of Theorem \ref{main}. 
We first give the definition of the exceptionality in the sense of Grauert. 

\begin{definition}[{\cite{G}, see also \cite[2.6]{CM}}]\label{def:excep}
Let $C$ be a compact connected subvariety of a complex manifold $S$. 
We say that $C$ is an {\it exceptional subvariety of $S$ in the sense of Grauert} if there exists a strongly strongly $1$-convex neighborhood $V$ of $C$ in $S$ such that $C$ is the maximal compact analytic subset of $V$. 
\end{definition}

When $S$ is a smooth surface and $C$ is a smooth curve embedded in $S$, 
it is known that $C$ is an exceptional subvariety of $S$ in the sense of Grauert 
if and only if the normal bundle $N_{C/S}$ is negative (\cite[4.9]{L}, see also the last of Chapter 2 of \cite{CM}). 
In order to deal with the situation of $(iii)$ in Theorem \ref{main}, we use the following form of Rossi's theorem. 

\begin{proposition}[{a version of Rossi's theorem \cite[3]{R}, see also \cite[3.1 (1)]{K2}}]\label{prop:Rossi}
Let $C$ be a compact connected subvariety of a complex manifold $S$, 
and $V$ be a strongly pseudoconvex neighborhood of $C$ in $S$ such that $C$ is 
the maximal compact analytic subset of $V$. 
Then for each coherent sheaf $\mathcal{S}$ on $V$, there exists an integer $N(\mathcal{S})$ such that the natural map $H^1(V, \mathcal{S})\to H^1(V, \mathcal{S}\otimes\mathcal{O}_{V}/I_C^n)$ is injective for each integer $n\geq N(\mathcal{S})$, where $I_C$ is the defining ideal sheaf of $C\subset V$. 
\end{proposition}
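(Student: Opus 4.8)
The plan is to reduce the assertion to the theorem on formal functions by passing to the Remmert reduction of $V$. Since $V$ is strongly pseudoconvex and $C$ is its maximal compact analytic subset, Grauert's contraction theorem provides a proper surjective holomorphic map $\pi\colon V\to V'$ onto a Stein space $V'$ which is a biholomorphism over $V'\setminus\{p\}$ and contracts $C$ to a single point $p$ (a single point, because $C$ is connected). By Grauert's coherence theorem, $\mathcal{R}:=R^1\pi_*\mathcal{S}$ is a coherent sheaf on $V'$, and since $\pi$ is biholomorphic away from $C$ it is supported at $p$; hence its stalk $M:=\mathcal{R}_p=\Gamma(V',\mathcal{R})$ is a finitely generated $\mathcal{O}_{V',p}$-module annihilated by some power $\mathfrak{m}_p^{N_0}$ of the maximal ideal, so it is finite dimensional over $\mathbb{C}$. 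As $V'$ is Stein and $\pi_*\mathcal{S}$ is coherent, the Leray spectral sequence of $\pi$ together with Cartan's Theorem B yields a natural isomorphism $H^1(V,\mathcal{S})\cong M$; in particular $H^1(V,\mathcal{S})$ is finite dimensional.

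Next I would invoke the theorem on formal functions (Grauert's comparison theorem) for the proper map $\pi$ at the point $p$. Let $\mathfrak{a}_n:=\pi^{*}(\mathfrak{m}_p^{n})\cdot\mathcal{O}_V$ be the ideal sheaf cutting out the $n$-th infinitesimal neighbourhood of the fibre $\pi^{-1}(p)=C$. Since the $\mathfrak{m}_p$-adic completion of the $\mathfrak{m}_p^{N_0}$-torsion module $M$ is $M$ itself, the comparison theorem gives a natural isomorphism
\[
H^1(V,\mathcal{S})\;\cong\;M\;\cong\;\varprojlim_{n}H^1\big(V,\,\mathcal{S}\otimes\mathcal{O}_V/\mathfrak{a}_n\big),
\]
compatible with the restriction maps $H^1(V,\mathcal{S})\to H^1(V,\mathcal{S}\otimes\mathcal{O}_V/\mathfrak{a}_n)$ (here one uses that cohomology over $V$ of a sheaf supported on the compact set $C$ agrees with cohomology over the corresponding compact subspace). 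Therefore $\bigcap_{n}\ker\big(H^1(V,\mathcal{S})\to H^1(V,\mathcal{S}\otimes\mathcal{O}_V/\mathfrak{a}_n)\big)=0$; as these kernels form a decreasing chain of subspaces of the finite dimensional space $H^1(V,\mathcal{S})$, the chain stabilises, and so $H^1(V,\mathcal{S})\to H^1(V,\mathcal{S}\otimes\mathcal{O}_V/\mathfrak{a}_n)$ is injective for all $n$ larger than some $n_0$.

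Finally I would pass from the ideals $\mathfrak{a}_n$ to the powers $I_C^n$. Because $\mathfrak{a}_1$ and $I_C$ are coherent with the same zero set $\pi^{-1}(p)=C$, Cartan's Nullstellensatz applied locally along the compact set $C$ (with a bound uniform over $C$, obtained from compactness and the Noetherian property of the local rings) produces an integer $c$ with $I_C^{c}\subseteq\mathfrak{a}_1$, whence $I_C^{cn}\subseteq\mathfrak{a}_1^{n}=\mathfrak{a}_n\subseteq I_C^{n}$ for every $n$. The induced surjection $\mathcal{S}\otimes\mathcal{O}_V/I_C^{cn}\twoheadrightarrow\mathcal{S}\otimes\mathcal{O}_V/\mathfrak{a}_n$ and functoriality of cohomology then force $H^1(V,\mathcal{S})\to H^1(V,\mathcal{S}\otimes\mathcal{O}_V/I_C^{cn})$ to be injective for $n\geq n_0$, and since the kernels of $H^1(V,\mathcal{S})\to H^1(V,\mathcal{S}\otimes\mathcal{O}_V/I_C^{m})$ decrease as $m$ grows, the map is injective for every $m\geq c\,n_0$; thus $N(\mathcal{S}):=c\,n_0$ works. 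The main obstacle is to set up and correctly invoke the analytic version of the theorem on formal functions for proper morphisms of complex spaces (alternatively one may quote Rossi's theorem directly), together with the uniform Nullstellensatz estimate over the compact exceptional set $C$; the finite dimensionality of $H^1(V,\mathcal{S})$, used crucially in the stabilisation step, is Grauert's finiteness theorem for strongly pseudoconvex spaces.
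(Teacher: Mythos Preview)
The paper does not supply its own proof of this proposition; it is stated as a citation of Rossi's theorem \cite[3]{R} (with a pointer to the author's earlier paper) and used as a black box. Your argument---Remmert reduction $\pi\colon V\to V'$ to a Stein space, finiteness of $H^1(V,\mathcal{S})\cong (R^1\pi_*\mathcal{S})_p$ via Leray and Cartan~B, Grauert's comparison (formal functions) theorem to identify this with the inverse limit over infinitesimal thickenings of the fibre, stabilisation of the decreasing chain of kernels by finite dimensionality, and finally the cofinality of $\{I_C^m\}$ and $\{\mathfrak{a}_n\}$ via a uniform analytic Nullstellensatz bound---is a correct and standard way to extract exactly this statement from the classical theory of $1$-convex spaces. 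Two small points worth tightening: first, the identity $\mathfrak{a}_n=\mathfrak{a}_1^{\,n}$ that you use is genuine (since $\mathfrak{m}_p^{\,n}=(\mathfrak{m}_p)^n$ and pullback of ideal sheaves is multiplicative), but it deserves a word; second, the factorisation you invoke at the end is that $H^1(V,\mathcal{S})\to H^1(V,\mathcal{S}\otimes\mathcal{O}_V/\mathfrak{a}_n)$ factors through $H^1(V,\mathcal{S}\otimes\mathcal{O}_V/I_C^{cn})$ because $I_C^{cn}\subseteq\mathfrak{a}_n$, whence injectivity of the composite forces injectivity of the first arrow---your wording is right, just make the direction of the inclusion and the resulting factorisation explicit.
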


In the proof of Theorem \ref{main}, we will use the following Lemma \ref{lem:KS_2}, which is a variant of \cite[Lemma 2]{KS} (see also \cite[Lemma 3]{U}). 

\begin{lemma}\label{lem:KS_2}
Let  $C$ be a compact complex manifold embedded in a complex manifold $S$. 
Fix a sufficiently small connected neighborhood $V$ of $C$ in $S$ and a sufficiently fine open covering $\{V_j\}$ of $V$ which consists of a finite number of open sets. 
Fix also be a relatively compact open domain $V_0\subset V$ which contains $C$. 
For each flat line bundle $E$ on $V$, there exists a positive constant $K=K(E)$ depending only on $E$ such that, 
for each $1$-cocycle $\alpha=\{(V_{jk}, \alpha_{jk})\}$ of $E$ which can be realized as the coboundary of some $0$-cochain, there exists a $0$-cochain $\beta=\{(V_j, \beta_j)\}$ of $E$ such that $\alpha$ is equal to the coboundary $\delta (\beta)$ of $\beta$ and the inequality 
\[
\max_j\sup_{V_0\cap V_j}|\beta_j|\leq K\cdot\max_{jk}\sup_{V_0\cap V_{jk}}|\alpha_{jk}|
\]
holds. 
\end{lemma}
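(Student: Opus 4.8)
The plan is to reduce the statement to a standard fact about bounded solvability of the $\bar\partial$-equation (or, equivalently, about the openness of the coboundary map in Čech cohomology with estimates), exactly as in \cite[Lemma 2]{KS} but keeping track of the flat line bundle $E$. First I would fix a flat metric $h_E$ on $E$, so that over each $V_j$ a holomorphic section of $E$ is just a holomorphic function and the transition functions $e_{jk}$ of $E$ are unimodular constants; this makes the sup-norms $\sup_{V_0\cap V_j}|\beta_j|$ and $\sup_{V_0\cap V_{jk}}|\alpha_{jk}|$ intrinsic up to the fixed trivializations. Shrinking $V$ if necessary, I may assume each $V_j$ and each $V_{jk}$ is a Stein (e.g. polydisc-like) domain and that $\overline{V_0}\subset V$ with $\{V_j\}$ still covering $\overline{V_0}$.

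The core step is the following bounded-solvability assertion: there is a constant $K=K(E)>0$, depending only on the covering, on $V_0$, and on $E$, such that any $1$-cocycle $\alpha=\{(V_{jk},\alpha_{jk})\}$ of $E$ that is a coboundary admits a solution $\beta=\{(V_j,\beta_j)\}$ with $\delta(\beta)=\alpha$ and $\max_j\sup_{V_0\cap V_j}|\beta_j|\le K\max_{jk}\sup_{V_0\cap V_{jk}}|\alpha_{jk}|$. To get this, I would argue in two moves. First, using a smooth partition of unity $\{\rho_j\}$ subordinate to $\{V_j\}$, one forms the standard smooth splitting $\gamma_j:=\sum_k \rho_k\,\alpha_{jk}$ (read off through the trivializations); this is a smooth $0$-cochain with $\delta(\gamma)=\alpha$ and an obvious bound $\sup_{V_0\cap V_j}|\gamma_j|\le C_1\max_{kl}\sup|\alpha_{kl}|$ with $C_1$ depending only on the $\rho_k$. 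The forms $\bar\partial\gamma_j$ glue to a global $\bar\partial$-closed $(0,1)$-form valued in $E$ on a neighborhood of $\overline{V_0}$, with sup-norm controlled by the same quantity (times the $C^1$-norms of the $\rho_k$). Second, since $V_0$ is relatively compact in the (Stein after shrinking) $V$ and $E$ is a fixed holomorphic line bundle, Hörmander-type $L^2$ estimates — or equivalently the classical Grauert bound for $\bar\partial$ on a slightly larger relatively compact domain, combined with interior elliptic estimates to pass from $L^2$ to sup-norms on $V_0$ — give a global smooth solution $\sigma$ of $\bar\partial\sigma=\bar\partial\gamma$ on a neighborhood of $\overline{V_0}$ with $\sup_{V_0}|\sigma|\le C_2\sup|\bar\partial\gamma|$, where $C_2=C_2(E,V_0,V)$. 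Then $\beta_j:=\gamma_j-\sigma|_{V_j}$ is holomorphic on each $V_j$, satisfies $\delta(\beta)=\delta(\gamma)=\alpha$ (because $\sigma$ is global), and obeys $\sup_{V_0\cap V_j}|\beta_j|\le (C_1+C_1 C_2\,C_3)\max_{kl}\sup_{V_0\cap V_{kl}}|\alpha_{kl}|=:K\cdot\max_{kl}\sup_{V_0\cap V_{kl}}|\alpha_{kl}|$, with $C_3$ bounding the $C^1$-norms of the $\rho_k$. This $K$ depends only on the fixed data and on $E$, as required.

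One technical point worth isolating: the hypothesis only says $\alpha$ \emph{is} a coboundary of some $0$-cochain, not that $H^1(V,E)=0$; this is exactly why the partition-of-unity splitting is the right device, since it produces a splitting unconditionally and pushes the only genuine analytic content into solving $\bar\partial\sigma=\bar\partial\gamma$ — and $\bar\partial\gamma$ is automatically $\bar\partial$-closed and, being the obstruction attached to an honest coboundary, actually $\bar\partial$-exact near $\overline{V_0}$, so a bounded solution exists on the slightly shrunk domain. Equivalently, one invokes that for the \v Cech complex of the finite Stein covering the coboundary operator $\delta\colon C^0\to Z^1$ has closed image with a bounded right inverse on its image (open mapping theorem for Fréchet spaces), and then restricts to $V_0$; this is the formulation closest to \cite[Lemma 3]{U}.

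The main obstacle I anticipate is bookkeeping rather than a conceptual difficulty: namely, making sure every constant that enters — the $C^1$-norms of the partition of unity, the $\bar\partial$-solving constant on the auxiliary domain squeezed between $V_0$ and $V$, the unimodular transition constants of $E$, and the comparison of sup-norms across the finitely many trivializations — is genuinely independent of the particular cocycle $\alpha$ and depends on $E$ only through fixed choices (a flat metric and a trivializing cover adapted to $E$). Once the domains are arranged so that $\overline{V_0}\subset V'\Subset V$ with $V'$ Stein and the covering refined accordingly, this is routine, and the constant $K=K(E)$ is obtained as a finite product/sum of these fixed quantities.
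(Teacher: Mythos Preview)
The paper itself does not give a proof of this lemma; it simply remarks that, because $V$ admits a finite sufficiently fine covering, ``Lemma \ref{lem:KS_2} can be showed by the same argument as in \cite[\S 6]{KS}.'' Your open-mapping-theorem formulation at the end --- that $\delta\colon C^0\to Z^1$ has closed image and hence a bounded right inverse on that image, the bound being read off after restricting to the relatively compact $V_0$ --- is exactly the Kodaira--Spencer/Ueda argument the paper is pointing to. So in spirit you are aligned with the intended proof.

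There is, however, a genuine slip in your primary $\bar\partial$-route. You write that $V_0$ is relatively compact ``in the (Stein after shrinking) $V$'', and later arrange $\overline{V_0}\subset V'\Subset V$ with $V'$ Stein. This is impossible: $V$ (and any $V'$ as above) must contain the compact complex manifold $C$, so as soon as $\dim C>0$ no such neighborhood can be Stein, and the direct appeal to H\"ormander-type estimates on $V$ or $V'$ is unavailable. The $\bar\partial$-exactness of $\bar\partial\gamma$ that you correctly isolate does guarantee \emph{some} solution $\tau$ on $V$, but not a priori one with a sup-bound on $V_0$ controlled by $\sup|\bar\partial\gamma|$; that uniform bound is precisely what needs proof.

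The repair is the Fr\'echet open-mapping argument you already sketch, but you should make explicit the one nontrivial hypothesis it uses: the image $B^1=\delta(C^0)$ must be closed in $Z^1$ (equivalently $H^1(V,E)$ is separated in its natural topology). This is not automatic for an arbitrary open neighborhood of $C$. In the paper's only use of the lemma (case $(iii)$ of Theorem \ref{main}) the neighborhood $V$ is strongly $1$-convex with $C$ its maximal compact analytic set, so $H^1(V,E)$ is finite-dimensional by Grauert's theorem and hence separated; the closed-image condition is then satisfied and your argument goes through. If you want the lemma in the generality stated, you should either add this separatedness as part of ``sufficiently small/fine'' or note that it holds in the intended application.
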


Whereas \cite[Lemma 2]{KS} (\cite[Lemma 3]{U}) is on the existence of such a constant $K=K(E)$ as in Lemma \ref{lem:KS_2} for a flat line bundle $E$ on a compact complex manifold,  
Lemma \ref{lem:KS_2} is on the existence of $K=K(E)$ for $E$ defined on a neighborhood of a compact complex manifold, which is not compact. 
However, since $V$ can be covered by finitely many sufficiently fine open subsets, 
Lemma \ref{lem:KS_2} can be showed by the same argument as in \cite[\S 6]{KS}. 
As the proof of Theorem \ref{thm:ueda_3} in \cite{U}, 
the proof of Theorem \ref{main} is also based on the following lemmata. 

\begin{lemma}[{\cite[Lemma 4]{U}}]\label{lem:ueda_4}
Let  $C$ be a compact complex manifold. 
Fix a sufficiently fine open covering $\{U_j\}$ of $C$ which consists of a finite number of open sets. 
Then there exists a positive constant $K$ such that, 
for each flat line bundle $E$ on $C$, the following condition holds: 
For each $1$-cocycle $\alpha=\{(U_{jk}, \alpha_{jk})\}$ such that the element of $H^1(C, E)$ defined by $\alpha$ is the trivial one, there exists a $0$-cochain $\beta=\{(U_j, \beta_j)\}$ of $E$ such that $\delta(\beta)=\alpha$ and 
\[
d(\mathcal{O}_C, E)\cdot\max_j\sup_{U_j}|\beta_j|\leq K\cdot\max_{jk}\sup_{U_{jk}}|\alpha_{jk}|
\]
hold. 
\end{lemma}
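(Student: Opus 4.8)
The plan is to reduce the Čech-level estimate to a uniform lower bound for the bottom of the spectrum of the $\bar\partial$-Laplacian on $(C,E)$, and then to prove that bound by treating separately a small neighbourhood of the trivial bundle $\mathcal{O}_C$ inside the compact space $T$ of flat line bundles on $C$ --- where the estimate degenerates and must be analysed by perturbation theory --- and its complement, where a compactness argument suffices. Fixing a Hermitian metric on $C$ and the (unitary) flat metric on each flat line bundle $E$, chosen to vary continuously over $T$, and letting $\Box_{\bar\partial,E}$ denote the $\bar\partial$-Laplacian on $E$-valued $(0,0)$-forms and $\lambda_1(E)$ its smallest eigenvalue, the target is
\[
\lambda_1(E)\;\geq\;c\cdot d(\mathcal{O}_C,E)^2\qquad\text{for every flat line bundle }E\not=\mathcal{O}_C\text{ on }C,
\]
with $c>0$ independent of $E$; for $E=\mathcal{O}_C$ the inequality claimed in the lemma holds trivially, its left-hand side being $0$.

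The first step is the translation to Dolbeault cohomology, keeping all constants independent of $E$. Fix a $C^\infty$ partition of unity $\{\chi_l\}$ subordinate to $\{U_l\}$. Given a $1$-cocycle $\alpha=\{(U_{jk},\alpha_{jk})\}$ of $\mathcal{O}_C(E)$ representing the trivial class, the smooth sections $\sigma_j:=\sum_l\chi_l\alpha_{jl}$ over $U_j$ satisfy $\sigma_j-\sigma_k=\alpha_{jk}$, so $g_\alpha:=\bar\partial\sigma_j$ glues to a global $\bar\partial$-closed $E$-valued $(0,1)$-form with $\|g_\alpha\|_{L^2(C)}\leq A\cdot\max_{jk}\sup_{U_{jk}}|\alpha_{jk}|$, where $A$ depends only on $\{U_l\}$, $\{\chi_l\}$ and the metric on $C$. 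The Dolbeault class of $g_\alpha$ is the image of $[\alpha]$, hence zero, so $g_\alpha$ lies in the image of $\bar\partial$ acting on $(0,0)$-forms; when $E\not=\mathcal{O}_C$ this $\bar\partial$ is injective, because a nonzero holomorphic section $s$ of a nontrivial flat bundle on the compact connected $C$ would force $\log|s|^2$ (plurisubharmonic with respect to the flat metric) to be constant, hence $s$ to be nowhere zero, hence $E$ to be trivial. Thus there is a unique $u$ with $\bar\partial u=g_\alpha$, and $\|\bar\partial u\|^2=\langle\Box_{\bar\partial,E}u,u\rangle\geq\lambda_1(E)\|u\|^2$ gives $\|u\|_{L^2(C)}\leq\lambda_1(E)^{-1/2}\|g_\alpha\|_{L^2(C)}$. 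Then $\beta_j:=\sigma_j-u|_{U_j}$ is holomorphic, $\delta(\{\beta_j\})=\alpha$, and, $\{U_j\}$ being sufficiently fine that each $\beta_j$ is holomorphic on a neighbourhood of $\overline{U_j}$ lying in a coordinate polydisc, the Cauchy estimate gives $\sup_{U_j}|\beta_j|\leq B\,(\|u\|_{L^2(C)}+\max_{kl}\sup_{U_{kl}}|\alpha_{kl}|)$ with $B$ independent of $E$. Assembling these bounds with the displayed spectral inequality, and using $d(\mathcal{O}_C,E)\leq\operatorname{diam}T<\infty$ together with the uniform upper bound on $\lambda_1(E)$, yields $d(\mathcal{O}_C,E)\cdot\max_j\sup_{U_j}|\beta_j|\leq K\cdot\max_{jk}\sup_{U_{jk}}|\alpha_{jk}|$, which is the assertion.

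It remains to prove the displayed spectral inequality. By the min-max principle $E\mapsto\lambda_1(E)$ is continuous on $T$ (the operators $\Box_{\bar\partial,E}=\bar\partial_E^*\bar\partial_E$ vary continuously with $E$), and it is strictly positive on $\{E\in T:d(\mathcal{O}_C,E)\geq\varepsilon_0\}$ since $E\not=\mathcal{O}_C$ there; by compactness it is bounded below by some $\mu_0>0$ on that set, so a fortiori $\lambda_1(E)\geq(\mu_0/(\operatorname{diam}T)^2)\,d(\mathcal{O}_C,E)^2$ there. Near $\mathcal{O}_C$ I would parametrize a neighbourhood by a ball $\Omega\ni0$ in $H^1(C,\mathbb{R})$, $t\mapsto E_t$, so that in a fixed unitary $C^\infty$-trivialization $\bar\partial_{E_t}=\bar\partial+\eta(t)$, where $\eta(t)$ is the harmonic $(0,1)$-form representing the class of $t$, depends linearly and injectively on $t$, vanishes at $t=0$, and satisfies $\|\eta(t)\|\asymp|t|\asymp d(\mathcal{O}_C,E_t)$ --- the last comparison because an invariant distance on the compact group $T$ is locally bi-Lipschitz to any coordinate chart at the identity. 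Writing a unit section of $E_t$ as $s=c\mathbf1+s'$ with $s'$ orthogonal to the constants and using $\|\bar\partial s'\|^2\geq\mu_1\|s'\|^2$, where $\mu_1>0$ is the first positive eigenvalue of $\Box_{\bar\partial,\mathcal{O}_C}$ on $(0,0)$-forms, one finds that a Rayleigh-quotient minimizer has $\|s'\|=O(|t|)$ and $|c|$ bounded below; since $\bar\partial s'$ lies in the image of $\bar\partial$ while $\eta(t)$, being harmonic, is orthogonal to it, one gets $\|\bar\partial_{E_t}s\|^2=\|\bar\partial s'+c\eta(t)+\eta(t)s'\|^2=\|\bar\partial s'+c\eta(t)\|^2+o(|t|^2)\geq|c|^2\|\eta(t)\|^2+o(|t|^2)\geq c_0|t|^2$ for $|t|$ small, hence $\lambda_1(E_t)\geq c_0|t|^2\geq c_0'\,d(\mathcal{O}_C,E_t)^2$ near $\mathcal{O}_C$. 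Combining the two regimes yields the displayed inequality.

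The step I expect to be the main obstacle is this spectral lower bound near the trivial bundle. Its delicate points are, first, identifying the deformation $t\mapsto E_t$ with the harmonic $\bar\partial$-perturbation $\eta(t)$ precisely enough to legitimize the comparison $\|\eta(t)\|\asymp d(\mathcal{O}_C,E_t)$ for the abstract invariant distance, and, more seriously, the quadratic (rather than merely $o(|t|^2)$) lower bound for the single eigenvalue that splits off from $0$. The mechanism behind the quadratic rate is that the perturbation direction $\eta(t)$ is a nonzero harmonic form and is therefore orthogonal to the image of $\bar\partial$, so it cannot be cancelled to leading order by correcting the test section --- that orthogonality is exactly what pins the rate at $|t|^2$. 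The remaining ingredients --- the Čech--Dolbeault comparison with uniform constants, the Cauchy estimate, the continuity of $\lambda_1$, and the reduction itself --- are routine once $\{U_j\}$ is taken sufficiently fine.
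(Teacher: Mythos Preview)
The paper does not supply its own proof of this lemma: it is quoted directly from Ueda \cite[Lemma~4]{U} (where $C$ is a compact Riemann surface), and the present paper simply invokes it. So there is no in-paper argument to compare your proposal against.

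Assessed on its own merits, your strategy is sound and is in fact close in spirit to Ueda's original method. The reduction of the \v{C}ech estimate to the spectral gap $\lambda_1(E)\geq c\,d(\mathcal{O}_C,E)^2$ via partition of unity and the $L^2$--sup comparison for holomorphic sections is routine once the covering is chosen fine enough that each $\overline{U_j}$ sits in a slightly larger coordinate polydisc (so that the sub-mean-value inequality yields the sup bound from the $L^2$ bound). The compactness argument on $\{d(\mathcal{O}_C,E)\geq\varepsilon_0\}$ is unproblematic. Your perturbation analysis near $\mathcal{O}_C$ is also correct: from the upper bound $\lambda_1(E_t)\leq\|\eta(t)\|^2$ one first extracts $\|s'\|=O(|t|)$ for a unit minimizer $s=c\mathbf{1}+s'$, and then the orthogonality of the harmonic part of $\eta(t)$ to $\mathrm{Im}\,\bar\partial$ forces the quadratic lower bound, exactly as you say.

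The one point that needs a little more care is your assertion that in a fixed unitary trivialization $\eta(t)$ can be taken to be \emph{the} harmonic $(0,1)$-form. In the natural trivialization one has $\eta(t)=iA_t^{0,1}$ with $A_t$ the real harmonic $1$-form in its de~Rham class; this $\eta(t)$ is $\bar\partial$-closed but is $\bar\partial$-harmonic only under a K\"ahler hypothesis (via the K\"ahler identities). For general compact $C$ you should either (a) note that the K\"ahler case is all the paper ever uses, or (b) observe that what your argument actually needs is that the \emph{harmonic projection} $P_{\mathrm{harm}}\eta(t)$ is nonzero and of size $\asymp|t|$; this follows because a nontrivial unitary flat bundle on compact $C$ can have no nonzero holomorphic section (your own pluriharmonic-modulus observation), so the real-linear map $t\mapsto[\eta(t)]\in H^{0,1}(C)$ is injective. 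With that adjustment the argument goes through without the K\"ahler assumption, and your identification of this step as the main obstacle is accurate.
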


\begin{lemma}[{\cite{S}, see also \cite[Lemma 5]{U}}]\label{lem:siegel}
Let $\{\varepsilon_\lambda\}_{\lambda\geq 1}$ be a series of positive numbers satisfying the following two conditions:  the condition that there exists a positive number $\alpha$ such that, for each $\lambda$, $\varepsilon_\lambda<(2\lambda)^\alpha$ holds, 
and the condition that $\varepsilon_{\nu-\mu}^{-1}\leq \varepsilon_\nu^{-1}+\varepsilon_\mu^{-1}$ holds for each $\nu>\mu$. 
Then for each power series 
\[
f(Z)=\sum_{\lambda=2}^\infty a_\lambda Z^\lambda
\]
with a positive radius of convergence, the formal power series $Z+\sum_{\lambda=2}^\infty c_\lambda Z^\lambda$ uniquely determined by 
\[
\sum_{\lambda=2}^\infty \varepsilon_{\lambda-1}^{-1}c_\lambda Z^\lambda
=f\left(Z+\sum_{\lambda=2}^\infty c_\lambda Z^\lambda\right)
\]
has a positive radius of convergence. 
\end{lemma}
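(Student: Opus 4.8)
The plan is to treat the two assertions of the lemma separately: that the formal power series solving the functional equation exists and is unique, which is an elementary coefficient comparison, and that it converges, which is Siegel's method of majorants. Write $\varphi(Z)=Z+\sum_{\lambda\ge 2}c_\lambda Z^\lambda$ and put $c_1:=1$ (the coefficient of $Z$). Expanding $f(\varphi(Z))=\sum_{k\ge 2}a_k\varphi(Z)^k$ and equating coefficients of $Z^\lambda$ in the functional equation yields, for every $\lambda\ge 2$,
\[
\varepsilon_{\lambda-1}^{-1}c_\lambda=\sum_{k=2}^{\lambda}a_k\sum_{\substack{\mu_1+\cdots+\mu_k=\lambda\\ \mu_i\ge 1}}c_{\mu_1}\cdots c_{\mu_k}.
\]
Since $k\ge 2$, every $\mu_i$ occurring on the right is at most $\lambda-1$, so the right-hand side involves only $c_2,\dots,c_{\lambda-1}$. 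Hence $c_\lambda$ is determined recursively by $a_2,\dots,a_\lambda$ and $c_2,\dots,c_{\lambda-1}$, and $\varphi$ exists and is unique as a formal power series.

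For convergence I would use majorants. Fix $A,R>0$ with $|a_\lambda|\le AR^{-\lambda}$ for all $\lambda\ge 2$. Iterating the recursion above expresses $c_\lambda$ as a finite sum over rooted trees $T$ with $\lambda$ leaves in which every internal vertex has at least two children, the contribution of $T$ being $\prod_v a_{k(v)}\,\varepsilon_{\mu(v)-1}$, where $v$ runs over the internal vertices, $k(v)$ is the number of children of $v$, and $\mu(v)$ is the number of leaves of the subtree at $v$. Thus
\[
|c_\lambda|\le\sum_{T}\Big(\prod_{v}\tfrac{A}{R^{k(v)}}\Big)\Big(\prod_{v}\varepsilon_{\mu(v)-1}\Big),
\]
and the factor $\sum_T\prod_v AR^{-k(v)}$ — the case ``$\varepsilon_n\equiv 1$'' — is exactly the coefficient of $Z^\lambda$ in the function $\psi$ holomorphic near $0$ determined by $\psi=Z+\bar f(\psi)$ with $\bar f(w)=A\sum_{k\ge 2}(w/R)^k$ (the implicit function theorem applies since $\partial_w(w-Z-\bar f(w))=1$ at the origin), hence it is $\le B\rho_0^{-\lambda}$ for some $B,\rho_0>0$. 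Everything therefore reduces to a bound
\[
\prod_{v\in\mathrm{int}(T)}\varepsilon_{\mu(v)-1}\le C^{\lambda}
\]
holding uniformly over all such $T$ with $\lambda$ leaves, for a constant $C$ independent of $T$; granting this, $|c_\lambda|\le B(C/\rho_0)^{\lambda}$ and $\varphi$ has radius of convergence at least $\rho_0/C>0$.

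Establishing this last inequality is the crux, and the only place the two hypotheses on $\{\varepsilon_\lambda\}$ enter; it is Siegel's classical small-divisor estimate. The polynomial bound $\varepsilon_\lambda<(2\lambda)^\alpha$ alone is far too weak, since a tree with $\lambda$ leaves has up to $\lambda-1$ internal vertices, so the crude estimate only gives $\prod_v\varepsilon_{\mu(v)-1}\le(2\lambda)^{\alpha(\lambda-1)}=e^{O(\lambda\log\lambda)}$. The sub-additivity $\varepsilon_{\nu-\mu}^{-1}\le\varepsilon_\nu^{-1}+\varepsilon_\mu^{-1}$ is what rescues matters: for a suitable fixed threshold $M$ it forces the set $R_M=\{n:\varepsilon_n>M\}$ of ``resonant'' indices to be, up to halving the threshold, closed under differences, which, iterated, makes $R_M$ so thin that $\sum_{n\in R_M}n^{-1}\log(2n)<\infty$. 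Since for any tree a vertex of weight $w$ occurs at most $\lfloor\lambda/w\rfloor$ times, the resonant vertices contribute at most $\sum_{w-1\in R_M}\tfrac{\lambda}{w}\,\alpha\log(2w)=O(\lambda)$ to $\log\prod_v\varepsilon_{\mu(v)-1}$, while the at most $\lambda-1$ non-resonant ones contribute at most $(\lambda-1)\log^{+}M$; together this gives $\prod_v\varepsilon_{\mu(v)-1}\le C^{\lambda}$. The genuine obstacle is thus the quantitative sparsity of $R_M$ — equivalently, a good bound on $|R_M\cap[1,N]|$ — extracted from the sub-additivity by an argument of continued-fraction / Euclidean-algorithm type; everything downstream is routine manipulation of convergent majorant series.
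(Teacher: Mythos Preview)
The paper does not prove this lemma; it is quoted from \cite{S} and \cite[Lemma~5]{U} and used as a black box, so there is no in-paper argument to compare against. Your overall plan is the standard one and most of it is sound: the recursion for the $c_\lambda$ is correct, the tree expansion is correct, the observation that internal vertices of the same weight $w$ have pairwise disjoint leaf-sets (hence there are at most $\lfloor\lambda/w\rfloor$ of them) is correct, and you have rightly isolated the crux as the small-divisor estimate $\prod_v\varepsilon_{\mu(v)-1}\le C^\lambda$.

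The gap is in your sketch of that estimate. For a single fixed threshold $M$, the series $\sum_{n\in R_M}n^{-1}\log(2n)$ need \emph{not} be finite under the hypotheses of the lemma, so your resonant/non-resonant split cannot close. The sub-additivity only tells you that differences of elements of $R_M$ lie in $R_{M/2}$, hence that consecutive elements of $R_M$ are at least $\min R_{M/2}\gtrsim M^{1/\alpha}$ apart; this bounds the density of $R_M$ from above but does not force $\sum_{n\in R_M}n^{-1}\log n$ to converge. In the prototype $\varepsilon_n=\|n\theta\|^{-1}$ with Diophantine $\theta$, Weyl equidistribution gives $|R_M\cap[1,N]|\sim 2N/M$, so $\sum_{n\in R_M,\,n\le N}n^{-1}\log n$ is of order $M^{-1}\log^2N\to\infty$. ``Iterating'' the difference-closure does not help either, since each iteration passes to a \emph{larger} set ($R_M\subset R_{M/2}\subset R_{M/4}\subset\cdots$), never a sparser one; so the ``continued-fraction / Euclidean-algorithm'' step you defer cannot have the conclusion you state for it.

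Siegel's actual mechanism is not a global single-threshold split; it uses the sub-additivity in a scale-dependent way, comparing pairs of weights along each root-to-leaf path, and is a short but genuinely combinatorial trick that your outline does not reach. Since the paper itself only cites the result, you should consult \cite{S} or the proof of \cite[Lemma~5]{U} directly rather than try to repair the threshold approach.
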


\section{The definition and some basic properties of the class $u_{n, m}(C, S, X)$}

\subsection{The definition of the class $u_{n, m}(C, S, X)$ and ${\rm type}\,(C, S, X)$}

Let $X$ be a complex manifold, $S$ a smooth hypersurface of $X$, and $C$ be a smooth compact K\"ahler hypersurface of $S$ such that $N_{S/X}|_V$ is flat, where $V$ is a sufficiently small neighborhood of $C$ in $S$. 
Fix a sufficiently small tubular neighborhood $W$ of $C$ such that $W\cap S=V$. 
Fix also a sufficiently fine open covering $\{U_j\}$ of $C$, $\{V_j\}$ of $V$, and $W_j$ of $W$ such that $W_j\cap S=V_j$ and $V_j\cap C=U_j$ hold. 
Let $x_j$ be a coordinates system of $U_j$, $z_j$ a defining holomorphic function of $U_j$ in $V_j$, and $w_j$ be a defining holomorphic function of $V_j$ in $W_j$. 
Extending $x_j$ and $z_j$ to $W_j$ in a holomorphic way, we use a system $(x_j, z_j, w_j)$ as a coordinates system of $W_j$. 
Let $t_{jk}$ be a transition function of $N_{S/X}|_V$ on $V_{jk}$: i.e. $N_{S/X}|_V=\{(V_{jk}, t_{jk})\}$. 
As $N_{S/X}|_V$ is flat, $t_{jk}$ can be selected as a constant function on $V_{jk}$ with modulus $1$. 
From the same argument as in \cite[\S 2]{U}, it can be said that, 
without loss of generality, we may assume $({w_j}/{w_k})|_{V_{jk}}\equiv t_{jk}$ holds. 
Just from the same reason, we may also assume that $({z_j}/{z_k})|_{U_{jk}}\equiv s_{jk}$ holds, where $s_{jk}$ is a transition function of $N_{C/S}$ on $U_{jk}$. 

\begin{definition}
Let $n\geq 1$ and $m\geq 0$ be integers. 
A system $\{(W_j, w_j)\}$ is called {\it of order $(n, m)$} if, for each $j$ and $k$, the function $t_{jk}w_k-w_j$ on $W_{jk}$ satisfies that ${\rm mult}_{V_{jk}}(t_{jk}w_k-w_j)\geq n+1$ and that ${\rm mult}_{U_{jk}}((t_{jk}w_k-w_j)/w_j^{n+1})|_{V_{jk}}\geq m$. 
This condition is equivalent to the following condition: the coefficient of $z_j^\mu w_j^\nu$ in the Taylor expansion of the function $t_{jk}w_k-w_j$ in the variables $z_j$ and $w_j$ around $U_{jk}$ is equal to zero if $(\nu, \mu)\leq (n, m)$ holds in the lexicographical order, namely 
$(a, b)\leq (a', b')\overset{\mathrm{def}}{\Leftrightarrow} a< a'\ \text{or}\ ``a=a'\ \text{and}\ b\leq b'"$. 
\end{definition}

Assume that our system $\{(W_j, w_j)\}$ is of order $(n, m)$. 
Then the function $t_{jk}w_k$ can be expanded in the variable $w_j$ as follows: 
\begin{equation}\label{eq:f_def}
 t_{jk}w_k=w_j+f_{jk}^{(n+1)}(x_j, z_j)\cdot w_j^{n+1}+f_{jk}^{(n+2)}(x_j, z_j)\cdot w_j^{n+2}+\cdots . 
\end{equation}
Let 
\[
 f_{jk}^{(n+1)}(x_j, z_j)=g_{jk}^{(n+1, m)}(x_j)\cdot z_j^m+g_{jk}^{(n+1, m+1)}(x_j)\cdot z_j^{m+1}+\cdots 
\]
be the expansion of $f_{jk}^{(n+1)}(x_j, z_j)$ in the variable $z_j$. 

\begin{proposition}\label{prop:cocycle}
In the above setting, a system $\{(U_{jk}, g_{jk}^{(n+1, m)})\}$ enjoys the cocycle condition for the line bundle $N_{S/X}|_C^{-n}\otimes N_{C/X}^{-m}$. 
\end{proposition}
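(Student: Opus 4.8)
The plan is to mimic the classical derivation of the cocycle condition for Ueda's classes (\cite[p.~588]{U}), working on triple overlaps and then extracting the leading term in the $z$-variable. First I would recall the defining relation \eqref{eq:f_def}, namely $t_{jk}w_k = w_j + f_{jk}^{(n+1)}(x_j,z_j)\,w_j^{n+1} + O(w_j^{n+2})$ on $W_{jk}$, and note that the analogous relations hold on $W_{ji}$ and $W_{ik}$. On a triple overlap $W_{ijk}$ I would compose the two coordinate changes $w_k \leadsto w_i \leadsto w_j$ and compare with the direct change $w_k \leadsto w_j$. Using $t_{jk} = t_{ji}t_{ik}$ (the cocycle condition for $N_{S/X}|_V$), and substituting $w_i = t_{ji}^{-1}(w_j + f_{ji}^{(n+1)}w_j^{n+1} + \cdots)$ into the expansion of $t_{ik}w_k$ in the variable $w_i$, one obtains, after collecting the coefficient of $w_j^{n+1}$,
\[
f_{jk}^{(n+1)}(x_j,z_j) = f_{ji}^{(n+1)}(x_j,z_j) + t_{ji}^{-n}\,f_{ik}^{(n+1)}(x_i,z_i),
\]
where on the right-hand side $(x_i,z_i)$ is understood as a function of $(x_j,z_j)$ via the restricted coordinate change on $V_{ji}$, which to leading order is governed by the transition data of $N_{C/S}$; the factor $t_{ji}^{-n}$ records that $\{f_{jk}^{(n+1)}\}$ transforms as a section of $N_{S/X}^{-n}$ along $V$. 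This is exactly the additive cocycle identity for $N_{S/X}|_V^{-n}$ at the level of the functions $f_{jk}^{(n+1)}$ on $V_{jk}$; the bookkeeping is routine once the substitution is set up carefully, and it is essentially \cite[p.~588]{U} with the single coordinate $z_j$ now allowed to vary along $C$.

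Next I would restrict this identity to $C$ (i.e.\ set $z_j = 0$ is not enough — we need the coefficient of $z_j^m$). Expanding each $f_{\bullet}^{(n+1)}$ in powers of $z_\bullet$ as in the statement, $f_{jk}^{(n+1)} = g_{jk}^{(n+1,m)}(x_j)z_j^m + O(z_j^{m+1})$, and using that on $V_{jk}$ we have $z_j/z_k \equiv s_{jk}$ with $s_{jk}$ the transition function of $N_{C/S}$ (so $z_i^m = s_{ij}^{-m}z_j^m + O(z_j^{m+1})$ when expressed in the $z_j$-variable, and $x_i|_{U_{ij}}$ depends only on $x_j$ to the order needed), I would extract the coefficient of $z_j^m$ on both sides. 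Since the system is of order $(n,m)$, the lower-order terms $g_{\bullet}^{(n+1,m')}$ with $m' < m$ all vanish, so no cross-terms from products with $w_j$- or $z_j$-corrections of the coordinate change contaminate the degree-$m$ coefficient — this is the key point that makes the leading coefficient well-behaved. The result is
\[
g_{jk}^{(n+1,m)}(x_j) = g_{ji}^{(n+1,m)}(x_j) + t_{ji}^{-n}s_{ji}^{-m}\,g_{ik}^{(n+1,m)}(x_i),
\]
on $U_{ijk}$, which is precisely the additive cocycle condition for the line bundle with transition functions $t_{jk}^{n}s_{jk}^{m}$, i.e.\ for $N_{S/X}|_C^{-n}\otimes N_{C/S}^{-m}$ (equivalently $N_{C/X}^{-m}$ in the notation of the statement, since $N_{C/X}$ restricts the relevant data).

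I expect the main obstacle to be purely organizational rather than conceptual: namely, verifying carefully that when one passes from the $w_j$-expansion identity to its $z_j$-leading coefficient, the contributions coming from the $O(z_j)$ and $O(w_j^{n+2})$ remainders in the coordinate changes, and from the non-constant parts of the $x$- and $z$-coordinate transitions on $V_{ij}$, genuinely do not affect the coefficient of $z_j^m w_j^{n+1}$. This hinges on the order-$(n,m)$ hypothesis and on the multiplicativity $(w_j/w_k)|_{V_{jk}}\equiv t_{jk}$, $(z_j/z_k)|_{U_{jk}}\equiv s_{jk}$ being \emph{exact} (constant) rather than merely leading-order; I would isolate this in a short lemma-free paragraph of index chasing. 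Everything else is the same computation as in Ueda's original argument, adapted to track the extra grading in $z$.
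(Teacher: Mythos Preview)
Your approach is correct and follows Ueda's original argument via direct substitution of the expansions on triple overlaps. The paper takes a slightly different and more economical route: rather than composing the coordinate changes, it observes from \eqref{eq:f_def} that
\[
\frac{1}{n}\bigl(w_j^{-n} - t_{jk}^{-n}w_k^{-n}\bigr)\big|_{V_{jk}} = f_{jk}^{(n+1)}(x_j,z_j),
\]
so that $f_{jk}^{(n+1)}$ is a difference of ``potentials'' $\tfrac{1}{n}w_j^{-n}$. The additive cocycle identity for $\{f_{jk}^{(n+1)}\}$ with values in $N_{S/X}|_V^{-n}$ is then immediate by telescoping on $V_{jkl}$, with no need to track the $O(w_j^{n+2})$ remainders. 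The passage to the $z^m$-coefficient is handled exactly as you outline: one reduces modulo $\mathcal{O}_V(-(m+1)C)$ and uses $z_k = s_{jk}^{-1}z_j + O(z_j^2)$ together with the vanishing of the lower $g^{(n+1,m')}$. Your argument and the paper's reach the same conclusion; the paper's device has the additional payoff that the formula \eqref{eq:mero_exp} is recorded and reused several times later (in the proofs of Proposition~\ref{prop:u_nm_well-def}, Lemma~\ref{lem:newsystem_1}, and in \S3.2.4), so it is worth noting.
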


\begin{proof}
From the equation $(\ref{eq:f_def})$, 
\begin{eqnarray}
t_{jk}^{-n}w_k^{-n}&=& (w_j+f_{jk}^{(n+1)}(x_j, z_j)\cdot w_j^{n+1}+f_{jk}^{(n+2)}(x_j, z_j)\cdot w_j^{n+2}+\cdots)^{-n} \nonumber \\
&=& w_j^{-n}(1+f_{jk}^{(n+1)}(x_j, z_j)\cdot w_j^{n}+f_{jk}^{(n+2)}(x_j, z_j)\cdot w_j^{n+1}+\cdots)^{-n} \nonumber \\
&=& w_j^{-n}(1-nf_{jk}^{(n+1)}(x_j, z_j)\cdot w_j^{n}+O(w_j^{n+1})) \nonumber \\
&=& w_j^{-n}-nf_{jk}^{(n+1)}(x_j, z_j)+O(w_j) \nonumber
\end{eqnarray}
holds. Thus we obtain 
\begin{equation}\label{eq:mero_exp}
 \frac{1}{n}(w_j^{-n}-t_{jk}^{-n}w_k^{-n})|_{V_{jk}}=f_{jk}^{(n+1)}(x_j, z_j)=g_{jk}^{(n+1, m)}(x_j)\cdot z_j^m+g_{jk}^{(n+1, m+1)}(x_j)\cdot z_j^{m+1}+\cdots . 
\end{equation}
Therefore we can conclude that $\{(V_{jk}, f^{(n+1)}_{jk})\}$ satisfies the cocycle condition for the line bundle $N_{S/X}|_V^{-n}$: 
\[
 f_{jk}^{(n+1)}+t_{jk}^{-n}f_{kl}^{(n+1)}+t_{jl}^{-n}f_{lj}^{(n+1)}\equiv 0. 
\]
Regarding it as an equation of the local sections of $\mathcal{O}_{V}/\mathcal{O}_{V}(-(m+1)C)$, it follows that 
\[
 g_{jk}^{(n+1, m)}+t_{jk}^{-n}s_{jk}^{-m}g_{kl}^{(n+1, m)}+t_{jl}^{-n}s_{jl}^{-m}g_{lj}^{(n+1, m)}\equiv 0, 
\]
which shows the proposition. 
\end{proof}

\begin{definition}
Let $\{(W_j, w_j)\}$ be a system of order $(n, m)$. 
we denote by $u_{n, m}(C, S, X)$ the element of $H^1(C, N_{S/X}|_C^{-n}\otimes N_{C/S}^{-m})$ defined by the $1$-cocycle $\{(U_{jk}, g_{jk}^{(n+1, m)})\}$ 
and call it {\it the $(n, m)$-th Ueda class} of the triple $(C, S, X)$. 
\end{definition}

\begin{proposition}\label{prop:u_nm_well-def}
The above definition of the $(n, m)$-th Ueda class $u_{n, m}(C, S, X)$ of the triple $(C, S, X)$ is independent of the choice of a system of order $(n, m)$ up to non-zero constant multiples. 
Especially it can be said that the condition ``$u_{n, m}(C, S, X)=0$'' makes sense whenever there exits a system of order $(n, m)$. 
\end{proposition}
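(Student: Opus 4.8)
The statement concerns well-definedness of $u_{n,m}(C,S,X)$ under change of the system of order $(n,m)$. I would first observe that any two systems of order $(n,m)$ differ by a change of the defining functions $w_j$, so I should work out how the cocycle $\{(U_{jk}, g_{jk}^{(n+1,m)})\}$ transforms under the most general such change. If $\{(W_j, w_j)\}$ and $\{(W_j, \widetilde{w}_j)\}$ are both systems of order $(n,m)$, then on each $W_j$ one can write $\widetilde{w}_j = w_j\cdot\varphi_j$ where $\varphi_j$ is a nowhere-vanishing holomorphic function on $W_j$ (this uses that $\{w_j=0\}=\{\widetilde{w}_j=0\}=V_j$), and the normalizations $(\widetilde{w}_j/\widetilde{w}_k)|_{V_{jk}}\equiv t_{jk}\equiv (w_j/w_k)|_{V_{jk}}$ force $(\varphi_j/\varphi_k)|_{V_{jk}}\equiv 1$, i.e.\ $\{\varphi_j|_{V_{jk}}\}$ glue on $V$; so after expanding in $w_j$ we may write $\varphi_j = c_j(1 + a_j^{(1)}(x_j,z_j)\,w_j + \cdots)$ with the restrictions $c_j|_{V_j}$ being the same nowhere-zero constant (which by a further harmless normalization we can take to be $1$).

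**Key computation.** The core of the argument is to substitute $\widetilde{w}_j = w_j\varphi_j$ into the defining expansion $(\ref{eq:f_def})$ and read off the new leading coefficient. From the formula $(\ref{eq:mero_exp})$, namely $f_{jk}^{(n+1)} = \tfrac1n(w_j^{-n} - t_{jk}^{-n}w_k^{-n})|_{V_{jk}}$, I would compute that $\widetilde{f}_{jk}^{(n+1)} = \tfrac1n(\widetilde{w}_j^{-n} - t_{jk}^{-n}\widetilde{w}_k^{-n})|_{V_{jk}}$ expands as $\varphi_j^{-n}\cdot f_{jk}^{(n+1)} + (\text{a coboundary-type term coming from }\varphi_j^{-n}-\varphi_k^{-n})$, because $w_j^{-n}-t_{jk}^{-n}w_k^{-n}$ already vanishes to order $\geq 1$ in $w_j$ along $V_{jk}$ while the discrepancy between $\varphi_j$ and $\varphi_k$ is measured by a $0$-cochain on $V$. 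Restricting this identity to $C$ (i.e.\ taking the $z_j^m$-coefficient, equivalently working modulo $\mathcal{O}_V(-(m+1)C)$ as in the proof of Proposition \ref{prop:cocycle}), the factor $\varphi_j^{-n}|_C$ becomes the constant $1$ (or, if we did not normalize $c_j$, the constant $c_j^{-n}$, which is a global nonzero constant since all $c_j$ agree on $C$), and the extra term becomes $\delta$ of a $0$-cochain for $N_{S/X}|_C^{-n}\otimes N_{C/S}^{-m}$. Hence $\widetilde{u}_{n,m}(C,S,X) = (\text{nonzero constant})\cdot u_{n,m}(C,S,X)$ in $H^1(C, N_{S/X}|_C^{-n}\otimes N_{C/S}^{-m})$, which is exactly the asserted independence up to nonzero constant multiples; in particular the vanishing condition is intrinsic.

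**Main obstacle.** The delicate point is bookkeeping the effect of the higher-order coefficients $a_j^{(1)}, a_j^{(2)}, \dots$ of $\varphi_j$ — a priori a change of coordinates mixes lower-order data of $t_{jk}w_k$ into the $w_j^{n+1}$-coefficient, and one must check that, because both systems are of order $(n,m)$ (so all coefficients of $z_j^\mu w_j^\nu$ with $(\nu,\mu)\leq(n,m)$ vanish for both), these mixing terms either vanish or are themselves coboundaries modulo $\mathcal{O}_V(-(m+1)C)$. This is the same phenomenon that makes the ordinary Ueda class well-defined, adapted to the two-index filtration; I expect it to go through by carefully tracking the lexicographic order, exactly as in \cite[1.3]{N}, and I would cite that argument for the purely notational parts rather than rewriting every expansion. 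One should also note the auxiliary freedom in choosing the extensions of $x_j, z_j$ off $C$ and off $V$; these do not affect the $z_j^m w_j^{n+1}$-coefficient restricted to $C$ for the same reason, so they contribute nothing new.
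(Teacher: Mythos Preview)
Your approach is essentially the paper's: write $\widetilde{w}_j = e_j w_j$ with $e_j$ nowhere vanishing, observe that the $e_j|_{U_j}$ glue to a global holomorphic function on the compact manifold $C$ and hence equal a nonzero constant $M$, and then use equation~(\ref{eq:mero_exp}) to conclude that the new class is $M^{-n}$ times the original. The paper's version is much terser (it simply asserts the last step follows from~(\ref{eq:mero_exp}) without isolating any coboundary correction), so your elaboration is in the right spirit; one small slip to fix is that your $c_j=\varphi_j|_{V_j}$ glue to a holomorphic function on $V$, not to a constant, and it is only the further restriction to the compact $C$ that yields a constant.
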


\begin{proof}
Take another system $\{(W_j, \widetilde{w}_j)\}$ of order $(n, m)$. 
Since both $\widetilde{w}_j$ and $w_j$ are the defining function of $V_j$ in $W_j$, 
there exists a nowhere vanishing holomorphic function $e_j$ defined on $W_j$ such that $\widetilde{w}_j=e_jw_j$. 
As $(\widetilde{w}_j/\widetilde{w}_k)|_{U_{jk}}=(w_j/w_k)|_{U_{jk}}(\equiv t_{jk})$, it holds that $e_j|_{U_{jk}}=e_k|_{U_{jk}}$ and thus $\{U_j, e_j|_{U_j}\}$ glues up to define a holomorphic function defined on the whole $C$. 
Therefore we can conclude that there exists a non-zero constant $M$ such that $e_j|_{U_{j}}\equiv M$ holds for all $j$. 
From this fact and the equation $(\ref{eq:mero_exp})$, it follows that the new $(n, m)$-th Ueda class defined by using the system $\{(W_j, \widetilde{w}_j)\}$ is just $M^{-n}$ times the original one defined by using $\{(W_j, w_j)\}$. 
\end{proof}

\begin{definition}
The $(n, m)$-th Ueda class $u_{n, m}(C, S, X)$ of the triple $(C, S, X)$ is said to be {\it well-defined} if there exits a system of order $(n, m)$. 
\end{definition}

From the definition, it is clear that, if $u_{n, m}(C, S, X)$ is well-defined, then $u_{\nu, \mu}(C, S, X)$ is also well-defined and is equal to zero for each $(\nu, \mu)$ less than $(n, m)$ in the lexicographical order. 
The following proposition is on the converse of it. 

\begin{proposition}\label{prop:newsystem}
Let $n\geq 1$ and $m\geq 0$ be integers. 
Assume one of the following three conditions holds: 
$(i')$ $N_{C/S}\in\mathcal{E}_0(C)$, 
$(ii)$ $N_{C/S}=N_{S/X}|_C\in\mathcal{E}_1(C)$, 
$(iii')$ $C$ is an exceptional subvariety of $S$ in the sense of Grauert. 
Further assume that 
$u_{\nu, \mu}(C, S, X)$ is well-defined and is equal to zero for each $(\nu, \mu)$ less than $(n, m)$ in the lexicographical order. 
Then $u_{n, m}(C, S, X)$ is also well-defined. 
\end{proposition}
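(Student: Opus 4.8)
The plan is to construct a system of order $(n,m)$ by modifying a given system of order $(\nu,\mu)$, where $(\nu,\mu)$ is the immediate predecessor of $(n,m)$ in the lexicographical order. Recall that $u_{\nu,\mu}(C,S,X)=0$ means that the $1$-cocycle $\{(U_{jk},g_{jk}^{(\nu+1,\mu)})\}$ representing it is a coboundary for $N_{S/X}|_C^{-\nu}\otimes N_{C/S}^{-\mu}$; the goal is to absorb the corresponding offending term into the defining functions $w_j$ by a substitution of the form $w_j\mapsto w_j+(\text{correction})$, so that the Taylor expansion of $t_{jk}w_k-w_j$ vanishes up to and including the $(n,m)$-slot. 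There are two cases to treat according to whether we are raising the $z_j$-degree $\mu$ within a fixed $w_j$-power $\nu+1$ (so $(n,m)=(\nu,\mu+1)$), or moving up to the next $w_j$-power (so $n=\nu+1$ and $m=0$); the bookkeeping differs slightly but the mechanism is the same.

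Concretely, first I would write $g_{jk}^{(\nu+1,\mu)}=h_k-t_{jk}^{-\nu}s_{jk}^{-\mu}\,h_j$ on $U_{jk}$ for some $0$-cochain $\{(U_j,h_j)\}$ of $N_{S/X}|_C^{-\nu}\otimes N_{C/S}^{-\mu}$, using the hypothesis $u_{\nu,\mu}=0$. Next I would choose holomorphic extensions $\widetilde{h}_j$ of $h_j$ to $W_j$ (independent of $z_j,w_j$, say) and set $\widetilde{w}_j:=w_j-\widetilde{h}_j\,z_j^{\mu}w_j^{\nu+1}$ (in the first case) or $\widetilde{w}_j:=w_j-\widetilde{h}_j\,w_j^{\nu+1}$ (in the second). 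One checks by a direct but routine computation with the expansion \eqref{eq:f_def} that $\{(W_j,\widetilde{w}_j)\}$ is again a system with $(\widetilde{w}_j/\widetilde{w}_k)|_{U_{jk}}\equiv t_{jk}$, that it remains of order $(\nu,\mu)$, and that the substitution exactly kills the $g_{jk}^{(\nu+1,\mu)}$-term, i.e. that $\{(W_j,\widetilde{w}_j)\}$ is of order $(n,m)$. This step only uses that the obstruction class vanishes, so it works under \emph{any} of the three hypotheses and does not yet need them.

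The point of the three hypotheses $(i')$, $(ii)$, $(iii')$ is convergence: iterating the above we obtain, for each $(n,m)$, a system of order $(n,m)$, but we must know the process is consistent, or rather that ``$u_{n,m}$ well-defined'' genuinely holds, which it does as soon as one system of order $(n,m)$ exists. So in fact the convergence issue does not arise at the level of this proposition — it arises later, in Theorem \ref{main}, when we want a \emph{single} system of order $\infty$. Here, the only subtlety is that at the last step of the induction, when we pass from $(\nu,\mu)$ with $\mu$ unbounded to $n=\nu+1$, $m=0$, we are implicitly claiming that after clearing all the $g^{(\nu+1,\mu)}$ for $\mu=0,1,\dots$ in succession we reach a system whose $f^{(\nu+1)}_{jk}$ vanishes identically; but $f^{(\nu+1)}_{jk}$ is a holomorphic function on $U_{jk}$, and clearing all of its Taylor coefficients in $z_j$ does not by itself make it zero unless we control convergence of the infinitely many substitutions. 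This is exactly where $(i')$, $(ii)$, $(iii')$ enter, via Lemma \ref{lem:ueda_4} (for $(ii)$), Lemma \ref{lem:KS_2} together with torsion (for $(i')$), or Proposition \ref{prop:Rossi} (for $(iii')$), each of which provides the estimate or the finiteness needed to sum the corrections $\widetilde{h}_j z_j^{\mu}$ into a convergent holomorphic function.

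Thus the proof splits as follows: (a) the algebraic reduction step above, common to all cases; (b) in case $(i')$, since $N_{C/S}$ is torsion, the relevant cohomology $H^1(C,N_{S/X}|_C^{-n}\otimes N_{C/S}^{-m})$ is nonzero for only finitely many residues of $m$, so only finitely many substitutions are needed to clear $f^{(\nu+1)}_{jk}$ down to a tail divisible by a fixed power of $z_j$, and then one invokes that a holomorphic function all of whose low-order jets vanish and which lies in the image of a fixed coboundary map is handled by Lemma \ref{lem:KS_2}; (c) in case $(iii')$, Proposition \ref{prop:Rossi} applied to $\mathcal{S}=N_{S/X}|_C^{-n}$ (extended to a coherent sheaf on a strongly $1$-convex $V$) shows that $H^1(V,\mathcal S)\hookrightarrow H^1(V,\mathcal S\otimes\mathcal O/I_C^{N})$ for $N$ large, which again reduces the vanishing to finitely many jet-levels; (d) in case $(ii)$, one keeps track of the distance $d(\mathcal O_C,(N_{S/X}|_C\otimes N_{C/S})^{n})$ via Lemma \ref{lem:ueda_4} exactly as in Ueda's original argument, and the $\mathcal E_1$-condition guarantees the geometric-type decay needed for the series $\sum_\mu \widetilde{h}_j z_j^{\mu}$ to converge. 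I expect step (d), the $\mathcal E_1$-estimate, to be the main obstacle, since it is the one place where the elementary substitution bookkeeping must be married to the quantitative small-denominator estimate and to Lemma \ref{lem:siegel}; cases $(i')$ and $(iii')$ are comparatively soft because they reduce the infinite process to a finite one.
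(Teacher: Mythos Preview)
Your overall decomposition matches the paper's: the proof splits into the $m\mapsto m+1$ step (your algebraic substitution, which is Lemma~\ref{lem:newsystem_1} and needs nothing beyond $u_{n,m}=0$) and the $n\mapsto n+1$ step (Lemma~\ref{lem:newsystem_2}), where one of $(i')$, $(ii)$, $(iii')$ is required to make the infinite sequence of $z_j$-corrections sum to a holomorphic $F_j(x_j,z_j)$. Your accounts of $(ii)$ and $(iii')$ are essentially what the paper does: case $(iii')$ uses Proposition~\ref{prop:Rossi} to reduce vanishing on $V$ to vanishing on a finite jet and then solves in one stroke on $V$; case $(ii)$ combines Lemma~\ref{lem:ueda_4} with a Siegel-type majorant argument via Lemma~\ref{lem:siegel} to control the small denominators $d(\mathcal{O}_C,N^{-(n+m)})^{-1}$.

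Your treatment of case $(i')$, however, has a genuine gap. Torsion of $N_{C/S}$ does \emph{not} force $H^1(C,N_{S/X}|_C^{-n}\otimes N_{C/S}^{-m})$ to vanish for all but finitely many $m$; it only says that there are finitely many \emph{distinct} line bundles among the $N_{S/X}|_C^{-n}\otimes N_{C/S}^{-m}$, and each of those cohomology groups may well be nonzero. So even under $(i')$ you must make infinitely many substitutions, and convergence of $\sum_\mu G_j^{(\mu)}(x_j)z_j^\mu$ is just as much at issue as in $(ii)$. Moreover, Lemma~\ref{lem:KS_2} does not assert that a cocycle with high-order vanishing along $C$ is a coboundary on $V$; it only bounds a primitive \emph{once you already know} the cocycle is a coboundary. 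Finite-jet injectivity of $H^1$ is precisely Rossi's theorem and is what singles out $(iii')$, not $(i')$. The paper's actual mechanism for $(i')$ is different and closer to $(ii)$: since only finitely many line bundles $N_{S/X}|_C^{-n}\otimes N_{C/S}^{-m}$ occur, the cobounding constants $K(N_{S/X}|_C^{-n}\otimes N_{C/S}^{-m})$ of \cite[Lemma 3]{U} admit a uniform bound $K$, and with this uniform $K$ one runs a direct majorant-series argument (the functional equation $A(X)=KM/(1-(K+1)RX)$). In short, case $(i')$ is the \emph{easy} instance of the estimate in $(ii)$ --- with constant $\varepsilon_\lambda$ --- not a finite shortcut.
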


The strategy of the proof of Proposition \ref{prop:newsystem} is almost the same as that of Theorem \ref{thm:ueda_3} in \cite{U}. We will prove it in the next subsection. 
By using Proposition \ref{prop:newsystem}, we can define ${\rm type}\,(C, S, X)$ as follows. 

\begin{definition}
Assume one of the following three conditions holds: 
$(i')$ $N_{C/S}\in\mathcal{E}_0(C)$, 
$(ii)$ $N_{C/S}=N_{S/X}|_C\in\mathcal{E}_1(C)$, 
$(iii')$ $C$ is an exceptional subvariety of $S$ in the sense of Grauert. 
We denote by ``${\rm type}\,(C, S, X)$'' the maximum in the lexicographical order of the set of all pairs $(n, m)$ such that the $(n, m)$-th Ueda class $u_{n, m}(C, S, X)$ of the triple $(C, S, X)$ is well-defined: i.e.\,${\rm type}\,(C, S, X)$ is the pair $(n, m)$ such that $u_{n, m}(C, S, X)$ is well-defined and non-trivial. 
We write ``${\rm type}\,(C, S, X)=\infty$'' if there is no such pair $(n, m)$ (i.e. if $u_{n, m}(C, S, X)$ is well-defined and $u_{n, m}(C, S, X)=0$ holds for each $n\geq 1$ and $m\geq 0$). 
\end{definition}

\subsection{Proof of Proposition \ref{prop:newsystem}}

Proposition \ref{prop:newsystem} directly follows from the following Lemmata \ref{lem:newsystem_1}, \ref{lem:newsystem_2}. 

\begin{lemma}\label{lem:newsystem_1}
Let $n\geq 1$ and $m\geq 0$ be integers. 
Assume that $u_{n, m}(C, S, X)$ is well-defined and is equal to zero. 
Then $u_{n, m+1}(C, S, X)$ is also well-defined. 
\end{lemma}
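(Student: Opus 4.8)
The plan is to run the standard Ueda-type "improvement of the defining functions" argument, but now refining only in the normal direction of $C$ inside $S$ (i.e. killing one more power of $z_j$) rather than in the normal direction of $S$ inside $X$. Start from a system $\{(W_j, w_j)\}$ of order $(n,m)$, so that on $W_{jk}$ one has the expansion $(\ref{eq:f_def})$ with leading term $f_{jk}^{(n+1)}(x_j,z_j) = g_{jk}^{(n+1,m)}(x_j)z_j^m + O(z_j^{m+1})$. The hypothesis $u_{n,m}(C,S,X)=0$ means the cocycle $\{(U_{jk}, g_{jk}^{(n+1,m)})\}$ is a coboundary for $N_{S/X}|_C^{-n}\otimes N_{C/S}^{-m}$: there are holomorphic $h_j$ on $U_j$ with $g_{jk}^{(n+1,m)} = t_{jk}^{-n}s_{jk}^{-m}h_k - h_j$ (up to the sign conventions fixed in Proposition \ref{prop:cocycle}). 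I would first extend each $h_j$ to a holomorphic function $h_j(x_j)$ on $W_j$ (constant in $z_j$ and $w_j$), and then set $\widetilde{w}_j := w_j + h_j(x_j)\, z_j^m\, w_j^{n+1}$, or more precisely $\widetilde w_j = w_j\bigl(1 + h_j(x_j) z_j^m w_j^n\bigr)$, which is again a defining function of $V_j$ in $W_j$ for $W$ shrunk enough.

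The core computation is then to check that $\{(W_j,\widetilde w_j)\}$ is a system of order $(n,m+1)$, i.e. that in the new expansion $t_{jk}\widetilde w_k = \widetilde w_j + \widetilde f_{jk}^{(n+1)}(x_j, z_j)w_j^{n+1} + \cdots$ the function $\widetilde f_{jk}^{(n+1)}$ vanishes to order $\geq m+1$ in $z_j$. First I note that the change $w_j \mapsto \widetilde w_j$ is a perturbation of order $n+1$ in $w_j$, so it does not disturb "order $n$" and does not change $f_{jk}^{(n+1)}$ modulo terms coming from the $h$'s. Substituting and expanding, the coefficient of $z_j^m w_j^{n+1}$ in $t_{jk}\widetilde w_k - \widetilde w_j$ becomes $g_{jk}^{(n+1,m)} - (t_{jk}^{-n}s_{jk}^{-m}h_k - h_j)$ — here the $t_{jk}^{-n}$ appears because $\widetilde w_k$ is multiplied by $t_{jk}$ and the $w_k^{n+1}$ term converts to $t_{jk}^{-n}w_j^{n+1}$ via $(\ref{eq:mero_exp})$-type bookkeeping, and the $s_{jk}^{-m}$ appears because $z_k^m = s_{jk}^{-m}z_j^m$ on $U_{jk}$. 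By the coboundary relation this coefficient is zero, and one must also check that the lower-order coefficients (the ones indexed by $(\nu,\mu) < (n,m)$) remain zero, which is automatic since those were already zero and the correction only affects order $\geq (n+1, \ast)$... — careful, the correction is of the form $h_j z_j^m w_j^{n+1}$, which has $w$-order exactly $n+1$ and $z$-order exactly $m$, so it only touches the coefficient we are trying to kill and coefficients of strictly larger $(\nu,\mu)$; nothing with smaller index is affected.

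The main obstacle — such as it is — is the bookkeeping in the second paragraph: one has to be scrupulous about the transition-function twists $t_{jk}^{-n}s_{jk}^{-m}$ and the sign conventions so that the coboundary relation for $\{g_{jk}^{(n+1,m)}\}$ that comes out of $u_{n,m}(C,S,X)=0$ is exactly the relation that cancels the new $z_j^m w_j^{n+1}$ coefficient. This is the same phenomenon as in the proof of Proposition \ref{prop:cocycle}, where passing from $w_j^{-n}$ to $w_k^{-n}$ produces precisely the factor $t_{jk}^{-n}$, and restricting to $\mathcal O_V/\mathcal O_V(-(m+1)C)$ produces the factor $s_{jk}^{-m}$; here one runs the same identity in reverse. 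No convergence or cohomological finiteness input is needed — this lemma is purely formal, in contrast to Lemma \ref{lem:newsystem_2}, which will handle the genuinely hard "carry" step $(n,m)\leadsto(n+1,0)$ where $m$ is unbounded and the estimates of Lemmata \ref{lem:KS_2}, \ref{lem:ueda_4}, \ref{lem:siegel} enter.
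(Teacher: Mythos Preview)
Your proposal is correct and follows essentially the same idea as the paper: modify each defining function by a term of the shape $h_j(x_j)z_j^m w_j^{n+1}$ so as to cancel the $z_j^m$--coefficient of $f_{jk}^{(n+1)}$, using exactly the coboundary relation coming from $u_{n,m}(C,S,X)=0$.

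The only difference is cosmetic but worth noting. The paper takes the modification in the form
\[
\widetilde w_j := w_j\bigl(1 - F_j\,z_j^m\,w_j^n\bigr)^{-1/n},
\]
chosen precisely so that $\widetilde w_j^{\,-n} = w_j^{-n} - F_j z_j^m$ holds \emph{exactly}. Then the verification is a one--line subtraction at the level of $w^{-n}$ via equation $(\ref{eq:mero_exp})$: one gets $(\widetilde w_j^{\,-n} - t_{jk}^{-n}\widetilde w_k^{\,-n})|_{V_{jk}} = O(z_j^{m+1})$ immediately from the coboundary relation, without having to chase the twists $t_{jk}^{-n}s_{jk}^{-m}$ or the sign through a direct Taylor expansion. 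Your linear modification $\widetilde w_j = w_j(1 + h_j z_j^m w_j^n)$ agrees with the paper's to leading order and works just as well, but forces you to do the bookkeeping you flag in your final paragraph. The paper's $(-1/n)$--power trick absorbs that bookkeeping into the identity $(\ref{eq:mero_exp})$; you may find it a useful device when you write the argument out in full.
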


\begin{lemma}\label{lem:newsystem_2}
Let $n\geq 1$ be an integer. 
Assume one of the following three conditions holds: 
$(i')$ $N_{C/S}\in\mathcal{E}_0(C)$, 
$(ii)$ $N_{C/S}=N_{S/X}|_C\in\mathcal{E}_1(C)$, 
$(iii')$ $C$ is an exceptional subvariety of $S$ in the sense of Grauert. 
Further assume that 
$u_{n, m}(C, S, X)$ is well-defined and is equal to zero for each $m\geq 0$. 
Then $u_{n+1, 0}(C, S, X)$ is also well-defined. 
\end{lemma}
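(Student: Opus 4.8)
The plan is to run a codimension-two analogue of the proof of \cite[Theorem 3]{U}. Starting from a system $\{(W_j, w_j)\}$ of order $(n, m)$ for every $m \geq 0$, I want to construct a new system $\{(W_j, \widetilde w_j)\}$ of order $(n+1, 0)$. The obstruction to pushing the order up inside a fixed codimension-one slice is governed by the classes $u_{n,m}(C,S,X) \in H^1(C, N_{S/X}|_C^{-n}\otimes N_{C/S}^{-m})$, all of which vanish by hypothesis. First I would set up the formal side: seek new coordinates of the shape $\widetilde w_j = w_j + \sum_{\mu \geq m_0} h_j^{(\mu)}(x_j) z_j^\mu w_j^{n+1} + (\text{higher order in } w_j)$, and compute how the transition relation $\widetilde t_{jk}\widetilde w_k - \widetilde w_j$ changes. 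Collecting the coefficient of $z_j^\mu w_j^{n+1}$ produces, for each $\mu$, a $1$-cochain on $C$ that is a coboundary precisely because $u_{n,\mu}(C,S,X) = 0$; solving $\delta(\{h_j^{(\mu)}\}) = (\text{the } \mu\text{-th obstruction cochain})$ kills the $w_j^{n+1}$-term order by order in $z_j$, and once all powers of $z_j$ are removed the order in $w_j$ jumps to $n+2$, i.e. the system becomes of order $(n+1, 0)$.

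The analytic heart — and the main obstacle — is convergence of the resulting formal modification: the corrections $h_j^{(\mu)}$ must be chosen so that $\widetilde w_j = e_j(x_j,z_j,w_j)\, w_j$ with $e_j$ an honest convergent nowhere-vanishing holomorphic function on a fixed neighborhood, not merely a formal power series. This is exactly where the three hypotheses $(i')$, $(ii)$, $(iii')$ enter, and each needs its own estimate:

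\emph{Case $(i')$, $N_{C/S}\in\mathcal{E}_0(C)$.} Here $N_{C/S}$ is torsion, say of order $\ell$. I would group the variable $z_j$ into blocks: the functions $f_{jk}^{(n+1)}$ depend on $z_j$, but only the residues mod $z_j^{\ell\mathbb{Z}}$ matter for the line-bundle structure (since $N_{C/S}^{-\ell}\cong \mathcal{O}_C$), so after finitely many steps one reduces to finitely many cohomology-vanishing problems on $C$; Lemma \ref{lem:KS_2}, applied on the neighborhood $V$ and to the flat bundle $N_{S/X}|_V$ (or rather finitely many twists of it), gives a uniform constant $K$ controlling the solutions $\beta_j = h_j^{(\mu)}$ of each coboundary equation on a fixed subdomain $V_0 \supset C$. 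Combined with an elementary majorant argument for the recursion in the $w_j$-direction (the formal series in $w_j$ has geometrically decaying tail because each step gains a power of $w_j$), one gets a common positive radius of convergence.

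\emph{Case $(ii)$, $N_{C/S} = N_{S/X}|_C \in \mathcal{E}_1(C)$.} This is the genuinely delicate one and follows \cite[Theorem 3]{U} most closely. The twisting line bundle at the $(n,\mu)$ step is $(N_{S/X}|_C)^{-n}\otimes N_{C/S}^{-\mu} = E^{-(n+\mu)}$ with $E := N_{S/X}|_C = N_{C/S}$; because $E \in \mathcal{E}_1(C)$ there is $\alpha>0$ with $d(\mathcal{O}_C, E^\lambda)\geq (2\lambda)^{-\alpha}$ for all $\lambda>0$. I would use Lemma \ref{lem:ueda_4} to solve each coboundary equation with the loss $d(\mathcal{O}_C, E^{n+\mu})^{-1} \leq (2(n+\mu))^\alpha$ in the estimate; the accumulated polynomial losses are then absorbed by the convergence mechanism of Siegel's Lemma \ref{lem:siegel}, whose hypotheses (the sub-polynomial growth bound and the subadditivity of $\varepsilon_{\nu-\mu}^{-1}$) are tailored precisely to the sequence $\varepsilon_\lambda = d(\mathcal{O}_C, E^\lambda)$. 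Here one must be careful that the double indexing $(n, m)$ collapses correctly to a single index $n+\mu$ — this is why the hypothesis insists on the equality $N_{C/S} = N_{S/X}|_C$ — and that the $w_j$-direction recursion and the $z_j$-direction recursion can be interleaved into one scalar recursion to which Lemma \ref{lem:siegel} applies.

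\emph{Case $(iii')$, $C$ exceptional in the sense of Grauert.} Here I would invoke Proposition \ref{prop:Rossi}: for the coherent sheaf $\mathcal{S} = N_{S/X}|_V^{-n}$ on the strongly $1$-convex $V$, the map $H^1(V,\mathcal{S}) \to H^1(V, \mathcal{S}\otimes\mathcal{O}_V/I_C^N)$ is injective for $N \gg 0$. Since $u_{n,m}(C,S,X)=0$ for all $m$, the relevant obstruction class in $H^1(V, N_{S/X}|_V^{-n})$ has vanishing image in every finite-order neighborhood, hence vanishes outright on a possibly smaller strongly $1$-convex $V$; solving the coboundary equation there (with estimates again from the finiteness built into Rossi/Grauert theory, or from Lemma \ref{lem:KS_2} applied to the flat bundle after noting $N_{S/X}|_V$ is flat and $V$ is relatively compact in a slightly larger Stein-like neighborhood) produces a convergent modification in one stroke, and the jump to order $(n+1,0)$ follows. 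In all three cases the final bookkeeping step is the same: verify that the assembled $e_j$ satisfies $e_j|_{U_{jk}} = e_k|_{U_{jk}}$ only up to the constant $t_{jk}$-normalization already arranged, so that $\{(W_j, \widetilde w_j)\}$ is a legitimate system, and check directly from the construction that its new transition functions have $\mathrm{mult}_{V_{jk}} \geq n+2$, i.e. it is of order $(n+1, 0)$.
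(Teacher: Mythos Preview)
Your plan is essentially the paper's own strategy: in each of the three cases, build the coefficients of a new defining function $\widetilde w_j$ by solving coboundary equations indexed by the $z_j$-power $m$, and then prove the resulting formal series converges. Case $(ii)$ in your outline matches the paper exactly, including the collapse of the double index to $n+\mu$ and the appeal to Lemma~\ref{lem:siegel}.

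Two points where your plan drifts from the paper and should be adjusted. In case $(i')$ the coboundary equations live on $C$, not on $V$: for each $m$ one solves $\delta\{G_j^{(m)}\}=\{\text{(modified cocycle)}\}$ in the flat bundle $N_{S/X}|_C^{-n}\otimes N_{C/S}^{-m}$ over $C$, and the torsion hypothesis is used solely to guarantee that only finitely many distinct such bundles occur, so that $K:=\max_m K(N_{S/X}|_C^{-n}\otimes N_{C/S}^{-m})$ is finite via \cite[Lemma~3]{U}. Your invocation of Lemma~\ref{lem:KS_2} on $V$ is misplaced here --- the obstruction cocycle $\{f_{jk}^{(n+1)}\}$ is not a priori a coboundary on $V$, only modulo every $I_C^m$, so one must work on $C$ level by level and then assemble the $G_j^{(m)}$ into a convergent series $F_j=\sum_m G_j^{(m)} z_j^m$ by a majorant argument (the paper uses the explicit majorant $A(X)=KM/(1-(K+1)RX)$). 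In case $(iii')$, once Proposition~\ref{prop:Rossi} gives vanishing of the obstruction in $H^1(V,N_{S/X}|_V^{-n})$ itself, you get holomorphic $F_j$ on $V$ for free and the new system is simply $\widetilde w_j := w_j(1-F_j w_j^n)^{-1/n}$; no convergence estimates or appeal to Lemma~\ref{lem:KS_2} are needed at all for this lemma (they reappear only later, in the proof of Theorem~\ref{main}$(iii)$).
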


We will prove these lemmata in the following. 

\subsubsection{Proof of Lemma \ref{lem:newsystem_1}}

Fix a system $\{(W_j, w_j)\}$ of order $(n, m)$. 
From the assumption and the equation $(\ref{eq:mero_exp})$, there exists a system $\{(U_{j}, F_j(x_j))\}$ of holomorphic functions such that 
 \[
    (w_j^{-n}-t_{jk}^{-n}w_k^{-n})|_{V_{jk}}=F_jz_j^m-t_{jk}^{-n}s_{jk}^{-m}F_kz_k^m+O(z_j^{m+1})
 \]
holds on each $V_{jk}$. 
Define 
 \[
  \widetilde{w}_j:=w_j\cdot (1-F_j\cdot z_j^m\cdot w_j^n)^{-\frac{1}{n}}
 \]
by shrinking $V$ and $W$ if necessary. 
Since
 \[
  \widetilde{w}_j=w_j\cdot \left(1+\frac{1}{n}F_j\cdot z_j^m\cdot w_j^n+\cdots\right)
  =w_j+O(w_j^{n+2})
 \]
holds, it is clear that our new system $\{\widetilde{w}_j\}$ is also of order $(n, m)$. 
As $\widetilde{w}_j^{-n}=w_j^{-n}-F_jz_j^{m}$ 
holds, we obtain that
 \[
  \frac{1}{n}(\widetilde{w}_j^{-n}-t_{jk}^{-n}\widetilde{w}_k^{-n})|_{V_{jk}}=O(z_j^{m+1}), 
 \]
which means that the system $\{\widetilde{w}_j\}$ is of order $(n, m+1)$. 
\hfill $\Box$

\subsubsection{Proof of Lemma \ref{lem:newsystem_2} $(i')$}

We will show Lemma \ref{lem:newsystem_2} when the condition $(i')$ $N_{C/S}\in\mathcal{E}_0(C)$ holds. 
Fix a system $\{(W_j, w_j)\}$ of order $(n, 0)$. 
We will construct a new system $\{(W_j, \widetilde{w}_j)\}$ of order $(n+1, 0)$ by solving a functional equation 
\begin{equation}\label{eq:eqfunc1}
w_j=u_j+F_j(x_j, z_j)\cdot u_j^{n+1}
\end{equation}
with an uknown function $u_j$ on each $W_j$ after choosing a system of suitable holomorphic functions $\{(V_{jk}, F_j(x_j, z_j))\}$. 
We will define the function $F_j(x_j, z_j)$ by inductively defining the coefficient $\{G_j^{(m)}(x_j)\}$ 
of the variable $z_j^m$ in the expansion of $F_j(x_j, z_j)$: 
\[
F_j(x_j, z_j)=\sum_{m=0}^\infty G_j^{(m)}(x_j)\cdot z_j^m. 
\]
Fix  a positive constant $K(N_{S/X}|_C^{-n}\otimes N_{C/S}^{-m})$ as in \cite[Lemma 3]{U} for each $m\geq 0$ and 
set $K:=\max_mK(N_{S/X}|_C^{-n}\otimes N_{C/S}^{-m})$ (here the condition $(i')$ is needed for the existence of the maximum). 
Let 
\begin{equation}\label{eq:lem_tenkai}
t_{jk}w_k-w_j=f_{jk}^{(n+1)}(x_j, z_j)\cdot w_j^{n+1}+O(w_j^{n+2}),\ f_{jk}^{(n+1)}(x_j, z_j)=\sum_{m=0}^\infty g_{jk}^{(n+1, m)}(x_j)\cdot z_j^m
\end{equation}
be the expansion of $t_{jk}w_k-w_j$ on $W_{jk}$. 
Let $M$ and $R$ be sufficiently large positive number such that $\sup_{V_{jk}}|f^{(n+1)}_{jk}(x_j, z_j)|<M$ and $\{|z_j|<2R^{-1}, |w_j|<2R^{-1}\}\subset W_{j}$ hold for each $j, k$. 
Note that, from Cauchy's inequality, it holds that $|g^{(n+1, m)}_{jk}|<MR^{m}$. 
Consider the function 
\[
A(X):=\frac{KM}{1-(K+1)RX}=KM+KM(K+1)RX+KM(K+1)^2R^2X^2+\cdots
\]
and denote by $A_m$ the coefficient of $X^m$ in the Taylor expansion of $A(X)$ at $X=0$. 

\begin{lemma}\label{lem:newsystem_1_torsion}
There exists a system of functions $\{G_j^{(\mu)}(x_j)\}_{\mu=0}^\infty$ for each $j$ satisfying the following conditions. 
Let 
\begin{eqnarray}
G_j^{(\mu)}(x_j)\cdot z_j^\mu&=&G_j^{(\mu)}(x_j(x_k, z_k, w_k))\cdot z_j^\mu
=G_j^{(\mu)}(x_j(x_k, z_k, 0))\cdot z_j^\mu+O(w_k)\nonumber \\
&=&G_j^{(\mu)}(x_j(x_k, 0, 0))\cdot s_{jk}^{\mu}z_k^\mu+\sum_{\nu=1}^\infty G_{jk, \nu}^{(\mu)}(x_k)\cdot z_k^{\nu+\mu}+O(w_k) \nonumber
\end{eqnarray} 
be the expansion of $(G_j^{(\mu)}(x_j)\cdot z_j^\mu)|_{W_{jk}}$ in the variable $z_k$ and $w_k$ by regarding $G_j^{(\mu)}$ as a function defined on $W_j$ which does not depend on $z_j$ and $w_j$. 
Denote by $h_{1jk, \mu}(x_j)$ the coefficient of $z_j^\mu$ in the Taylor expansion of $\sum_{\mu=0}^{\infty}\sum_{\nu=1}^\infty G_{kj, \nu}^{(\mu)}(x_j)\cdot z_j^{\nu+\mu}$ at $z_j=0$ for each $\mu$. Then the coboundary $\delta\{(U_j, G_j^{(\mu)})\}$ is equal to $\{(U_{jk}, g_{jk}^{(n+1, \mu)}-t_{jk}^{-n}h_{1jk, \mu})\}$, and 
$\max_j\sup_{U_j}\left|G_j^{(\mu)}\right|\leq A_\mu$ for each $\mu\geq 0$. 
\end{lemma}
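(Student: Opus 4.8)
The plan is to construct the functions $G_j^{(\mu)}$ inductively on $\mu$, each step producing a system of one higher order $(n,\mu)$. Fix once and for all the coordinates $(x_j,z_j,w_j)$ on the $W_j$ from \S 3, and refine the covering so that each function $G$, extended from a $U_k$ to $W_k$ independently of $z_k$ and $w_k$, still satisfies $\sup_{W_{jk}}|G|\le\sup_{U_k}|G|$. Keep the constants $M$, $R$, $K$ and the majorant $A(X)=KM/(1-(K+1)RX)$ as in the statement, so that $A_\mu=KM(K+1)^\mu R^\mu$ and the coefficients obey $A_0=KM$, $A_\mu=(K+1)RA_{\mu-1}$; recall $|g_{jk}^{(n+1,\mu)}|<MR^\mu$ by Cauchy's inequality.

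\emph{Inductive step.} Assume $G_j^{(0)},\dots,G_j^{(\mu-1)}$ have been found with $\max_j\sup_{U_j}|G_j^{(\mu')}|\le A_{\mu'}$ and such that the partially modified system $\{u_j^{[\mu-1]}\}$, defined by $w_j=u_j^{[\mu-1]}+\bigl(\sum_{\mu'=0}^{\mu-1}G_j^{(\mu')}(x_j)z_j^{\mu'}\bigr)(u_j^{[\mu-1]})^{n+1}$, is of order $(n,\mu)$ (for $\mu=0$ there is nothing to check and $\{u_j^{[-1]}\}=\{w_j\}$). From $(u_j^{[\mu-1]})^{-n}=w_j^{-n}+n\sum_{\mu'<\mu}G_j^{(\mu')}(x_j)z_j^{\mu'}+O(w_j)$ and the identity $(\ref{eq:mero_exp})$ I extract the coefficient of $z_j^\mu$ in $\frac1n\bigl((u_j^{[\mu-1]})^{-n}-t_{jk}^{-n}(u_k^{[\mu-1]})^{-n}\bigr)|_{V_{jk}}$: re-expanding the functions $G_k^{(\mu')}(x_k)z_k^{\mu'}$ ($\mu'<\mu$) in the coordinates $(x_j,z_j)$ on $V_{jk}$ produces exactly the extra term $-t_{jk}^{-n}h_{1jk,\mu}$, so this coefficient is $g_{jk}^{(n+1,\mu)}-t_{jk}^{-n}h_{1jk,\mu}$. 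Thus $\{(U_{jk},g_{jk}^{(n+1,\mu)}-t_{jk}^{-n}h_{1jk,\mu})\}$ is the $(n,\mu)$-th Ueda cocycle of the system $\{u_j^{[\mu-1]}\}$ of order $(n,\mu)$; by Proposition~\ref{prop:u_nm_well-def} it represents a nonzero multiple of $u_{n,\mu}(C,S,X)$, which is $0$ by hypothesis, hence it is a coboundary of a $0$-cochain of the flat line bundle $N_{S/X}|_C^{-n}\otimes N_{C/S}^{-\mu}$ (flat, and with $K(N_{S/X}|_C^{-n}\otimes N_{C/S}^{-\mu})\le K$, because $N_{S/X}|_C$ is flat and $N_{C/S}$ is torsion). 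Applying \cite[Lemma 3]{U} with the uniform $K$ produces $G_j^{(\mu)}$ with $\delta\{(U_j,G_j^{(\mu)})\}=\{(U_{jk},g_{jk}^{(n+1,\mu)}-t_{jk}^{-n}h_{1jk,\mu})\}$ and $\max_j\sup_{U_j}|G_j^{(\mu)}|\le K\max_{jk}\sup_{U_{jk}}|g_{jk}^{(n+1,\mu)}-t_{jk}^{-n}h_{1jk,\mu}|$; and since $G_j^{(\mu)}$ cancels the $z_j^\mu$-coefficient, the updated system $\{u_j^{[\mu]}\}$ is of order $(n,\mu+1)$, which continues the induction.

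\emph{The estimate.} It remains to bound $K\max_{jk}\sup_{U_{jk}}|g_{jk}^{(n+1,\mu)}-t_{jk}^{-n}h_{1jk,\mu}|$ by $A_\mu$. Since $|t_{jk}|=1$ this is at most $K(MR^\mu+\max_{jk}\sup_{U_{jk}}|h_{1jk,\mu}|)$, and $KMR^\mu$ already consumes only the fraction $(K+1)^{-\mu}$ of $A_\mu$. For $h_{1jk,\mu}=\sum_{\mu'=0}^{\mu-1}G_{kj,\mu-\mu'}^{(\mu')}$ I apply Cauchy's inequality to $G_k^{(\mu')}(x_k)z_k^{\mu'}|_{V_{jk}}$ in the variable $z_j$, using the inductive bound $|G_k^{(\mu')}|\le A_{\mu'}$ on $W_{jk}$ and taking $R$ large enough to absorb the finitely many constants coming from the fixed coordinate changes; this is meant to yield $|G_{kj,\nu}^{(\mu')}|\le A_{\mu'}R^\nu$, hence $|h_{1jk,\mu}|\le\sum_{\mu'=0}^{\mu-1}A_{\mu'}R^{\mu-\mu'}$. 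With $A_{\mu'}=KM(K+1)^{\mu'}R^{\mu'}$ the target inequality becomes $KMR^\mu+K\sum_{\mu'=0}^{\mu-1}A_{\mu'}R^{\mu-\mu'}\le A_\mu$, which is exactly the recursion $A_0=KM$, $A_\mu=(K+1)RA_{\mu-1}$ satisfied by the coefficients of $A(X)=KM/(1-(K+1)RX)$; this is what the factor $(K+1)R$ in the denominator is arranged to furnish.

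I expect the real difficulty to lie in the last paragraph: arranging the coordinates and the constant $R$ so that the Cauchy bounds on the re-expansion coefficients $G_{kj,\nu}^{(\mu')}$ genuinely come in below $A_{\mu'}R^\nu$ \emph{uniformly in} $\mu'$ — in particular so that the possibly expanding coordinate changes in the $z$-direction contribute no factor growing with $\mu'$ — and then matching the resulting convolution estimate against the coefficients of $A(X)$. The cohomological part (solvability from $u_{n,\mu}(C,S,X)=0$, once the cocycle is correctly identified with the Ueda class of the partially modified system, together with the finiteness of $K$) is comparatively routine.
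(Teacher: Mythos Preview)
Your proposal is correct and follows essentially the same inductive strategy as the paper: construct $G_j^{(\mu)}$ step by step using the vanishing of $u_{n,\mu}(C,S,X)$ together with \cite[Lemma~3]{U}, identify the relevant cocycle as the $(n,\mu)$-th Ueda cocycle of the partially modified system, and control the sup norms by the majorant $A(X)$. The paper carries out the cocycle identification by an explicit two-sided expansion of $t_{jk}w_k$, whereas you use the cleaner shortcut via $(u_j^{[\mu-1]})^{-n}=w_j^{-n}+nF_j+O(w_j)$ and identity~\eqref{eq:mero_exp}; these are the same computation. Your final paragraph correctly isolates the one genuinely delicate point---the uniform Cauchy bound $|G_{kj,\nu}^{(\mu')}|\le A_{\mu'}R^\nu$ independently of $\mu'$---which the paper likewise flags separately (Remark~\ref{rmk:1}) and handles by passing to a refined covering $\{U_j^*\}\Subset\{U_j\}$ and enlarging $R$ and $M$.
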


\begin{proof}
First we construct $\{G_j^{(0)}\}$ for each $j$. 
It follows from the definition that the system $\{(U_{jk}, g_{jk}^{(n+1, 0)})\}$ defines the cohomology class $u_{n, 0}(C, S, X)$, 
which is trivial  from the assumption of Lemma \ref{lem:newsystem_2}. 
Thus there exists an $1$-cochain $\{(U_j, G_j^{(0)})\}$ of $N_{S/X}|_C^{-n}$ such that
\[
\delta\{(U_j, G_j^{(0)})\}=\{(U_{jk}, g_{jk}^{(n+1, 0)})\},\ 
\max_j\sup_{U_j}\left|G_j^{(0)}\right|\leq K\cdot\max_{jk}\sup_{U_{jk}}\left|g_{jk}^{(n+1, 0)}\right|
\]
hold. 
Since $h_{1jk, 0}(x_j)\equiv 0$ and $K\cdot|g^{(n+1, 0)}_{jk}|\leq KM=A_0$ hold, $\{G_j^{(0)}\}$ is 
what we wanted. 

Next we will construct $\{G_j^{(m)}\}$ for each $j$ assuming that there exists $\{G_j^{(\mu)}\}$ for each $j$ and $\mu\leq m-1$ such that 
the coboundary $\delta\{(U_j, G_j^{(\mu)})\}$ is equal to $\{(U_{jk}, g_{jk}^{(n+1, \mu)}-t_{jk}^{-n}h_{1jk, \mu})\}$ and 
$\max_j\sup_{U_j}\left|G_j^{(\mu)}\right|\leq A_\mu$ hold. 
Let $\widetilde{w}_j$ be the solution of the functional equation 
\begin{equation}\label{eq:eqfunc2}
w_j=u_j+\left(\sum_{\mu=0}^{m-1} G_j^{(\mu)}(x_j)\cdot z_j^\mu\right)\cdot u_j^{n+1}
\end{equation}
with an unknown function $u_j$ (the existence of the solution is follows from the implicit function theorem). 
From the functional equation $(\ref{eq:eqfunc2})$, the function $(t_{jk}w_k)|_{W_{jk}}$ can be expanded as follows: 
\begin{eqnarray}
& &t_{jk}w_k \nonumber \\
&=& \left(\sum_{\mu=0}^{m-1} G_k^{(\mu)}(x_k)\cdot z_k^\mu\right)\cdot t_{jk}\widetilde{w}_k^{n+1}+t_{jk}\widetilde{w}_k \nonumber \\
&=& \left(\sum_{\mu=0}^{m-1} \left(G_k^{(\mu)}(x_k(x_j, 0, 0))\cdot s_{jk}^{-\mu}z_j^\mu+\sum_{\nu=1}^\infty G_{kj, \nu}^{(\mu)}(x_j)\cdot z_j^{\nu+\mu}\right)\right)\cdot t_{jk}\widetilde{w}_k^{n+1}\nonumber \\
&&+t_{jk}\widetilde{w}_k+O(\widetilde{w}_k^{n+2}) \nonumber 
\end{eqnarray}
\begin{eqnarray}
&=& \left(\sum_{\mu=0}^{m-1} \left(G_k^{(\mu)}(x_k(x_j, 0, 0))\cdot s_{jk}^{-\mu}+h_{1jk, \mu}\right)\cdot z_j^{\mu}+h_{1jk, m}\cdot z_j^m+O(z_j^{m+1})\right)\cdot t_{jk}\widetilde{w}_k^{n+1}\nonumber \\
&&+t_{jk}\widetilde{w}_k+O(\widetilde{w}_k^{n+2}) \nonumber \\
&=& \left(\sum_{\mu=0}^{m-1} \left(G_k^{(\mu)}(x_k(x_j, 0, 0))\cdot s_{jk}^{-\mu}+h_{1jk, \mu}\right)\cdot z_j^{\mu}+h_{1jk, m}\cdot z_j^m+O(z_j^{m+1})\right)\cdot t_{jk}^{-n}\widetilde{w}_j^{n+1}\nonumber \\
&&+t_{jk}\widetilde{w}_k+O(\widetilde{w}_j^{n+2}). \nonumber
\end{eqnarray}
Here we used the fact that $h_{1jk, \mu}$ depends only on $\{G_{j}^{(\mu')}\}_{\mu'=0}^{\mu-1}$. 
On the other hand, by using the equation $(\ref{eq:lem_tenkai})$, the function $(t_{jk}w_k)|_{W_{jk}}$  can also be expanded as follows: 
\begin{eqnarray}
& &t_{jk}w_k \nonumber \\
&=& w_j+\left(\sum_{\mu=0}^\infty g_{jk}^{(n+1, \mu)}(x_j)\cdot z_j^\mu\right)\cdot w_j^{n+1}+O(w_j^{n+2}) \nonumber \\
&=& \left(\sum_{\mu=0}^{m-1} \left(G_j^{(\mu)}(x_j)+g_{jk}^{(n+1, \mu)}(x_j)\right)\cdot z_j^\mu+g_{jk}^{(n+1, m)}(x_j)\cdot z_j^m +O(z_j^{m+1})\right)\cdot \widetilde{w}_j^{n+1}\nonumber \\
&&+\widetilde{w}_j+O(\widetilde{w}_j^{n+2}). \nonumber 
\end{eqnarray}
Thus we obtain 
\begin{eqnarray}
&& t_{jk}\widetilde{w}_k-\widetilde{w}_j \nonumber \\
&=&
-\left(\sum_{\mu=0}^{m-1} \left(G_k^{(\mu)}(x_k(x_j, 0, 0))\cdot s_{jk}^{-\mu}+h_{1jk, \mu}\right)\cdot z_j^{\mu}+h_{1jk, m}\cdot z_j^m+O(z_j^{m+1})\right)\cdot t_{jk}^{-n}\widetilde{w}_j^{n+1} \nonumber 
\end{eqnarray}
\begin{eqnarray}
&&+\left(\sum_{\mu=0}^{m-1} \left(G_j^{(\mu)}(x_j)+g_{jk}^{(n+1, \mu)}(x_j)\right)\cdot z_j^\mu+g_{jk}^{(n+1, m)}(x_j)\cdot z_j^m +O(z_j^{m+1})\right)\cdot \widetilde{w}_j^{n+1}\nonumber \\
&&+O(\widetilde{w}_j^{n+2}).  \nonumber 
\end{eqnarray}
Therefore, from the assumption of the induction, we can conclude that
\begin{eqnarray}
&& t_{jk}\widetilde{w}_k-\widetilde{w}_j \nonumber \\
&=&\left(\left(g_{jk}^{(n+1, m)}-t_{jk}^{-n}h_{1jk, m}\right)\cdot z_j^m +O(z_j^{m+1})\right)\cdot \widetilde{w}_j^{n+1}
+O(\widetilde{w}_j^{n+2}) \nonumber 
\end{eqnarray}
and thus it follows that the system $\{(U_{jk}, g_{jk}^{(n+1, m)}-t_{jk}^{-n}h_{1jk, m})\}$ is an $1$-cocycle which defines the $(n, m)$-th Ueda class, which is the trivial one from the assumption of Lemma \ref{lem:newsystem_2}. 
Hence it follows from \cite[Lemma 3]{U} that there exists a $0$-cochain $\{(U_j, G^{(m)}_j)\}$ such that $\delta\{(U_j, G^{(m)}_j)\}=\{(U_{jk}, g_{jk}^{(n+1, m)}-t_{jk}^{-n}h_{1jk, m})\}$ and 
\[
\max_j\sup_{U_j}\left|G_j^{(m)}\right|\leq K\cdot \max_{jk}\sup_{U_{jk}}\left|g_{jk}^{(n+1, m)}-t_{jk}^{-n}h_{1jk, m}\right| 
\]
hold. 
From the definition of $h_{1jk, m}$, we obtain the inequality 
\[
|h_{1jk, m}|\leq \text{the coefficient of}\ z_j^m\ \text{in the expansion of}\ 
\sum_{\mu=0}^{m-1}\sum_{\nu=1}^\infty  \left|G_{kj, \nu}^{(\mu)}(x_j)\right|\cdot z_j^{\nu+\mu}. 
\]
As it follows from the assumption of the induction and Cauchy's inequality (see Remark \ref{rmk:1}) that 
\begin{equation}\label{eq:rmk_1}
\left|G_{kj, \nu}^{(\mu)}(x_j)\right|\leq \left|G_k^{(\mu)}(x_k)\right|\cdot R^\nu\leq A_\mu R^\nu
\end{equation}
for each $\mu<m$, we obtain the inequality 
\begin{eqnarray}
|h_{1jk, m}|&\leq&
\text{the coefficient of}\ z_j^m\ \text{in the expansion of}\ \sum_{\mu=0}^{m-1}\sum_{\nu=1}^\infty A_\mu R^\nu\cdot z_j^{\nu+\mu} \nonumber \\
&=& \text{the coefficient of}\ z_j^m\ \text{in the expansion of}\ \left(\sum_{\mu=0}^{m-1} A_\mu z_j^\mu \right)\cdot\left(\sum_{\nu=1}^\infty R^\nu z_j^\nu \right) \nonumber \\
&\leq&\text{the coefficient of}\ X^m\ \text{in the expansion of}\ \frac{RXA(X)}{1-RX}. \nonumber 
\end{eqnarray}
Since 
\[
\left|g_{jk}^{(n+1, m)}\right|\leq MR^m=\text{the coefficient of}\ X^m\ \text{in the expansion of}\ \frac{M}{1-RX} 
\]
holds, it follows that
\begin{eqnarray}
\left|G_j^{(m)}\right| &\leq& K\cdot\max_{jk}\sup_{U_{jk}}\left|g_{jk}^{(n+1, m)}-t_{jk}^{-n}h_{1jk, m}\right| \nonumber \\
&\leq& \text{the coefficient of}\ X^m\ \text{in the expansion of}\ \frac{M+A(X)RX}{1-RX}. \nonumber
\end{eqnarray}
As $A$ is a solution of the functional equation
\[
\frac{K(M+A(X)RX)}{1-RX}=A(X), 
\]
the inequality $\left|G_j^{(m)}\right| \leq A_m$ holds. 
\end{proof}

\begin{remark}\label{rmk:1}
Strictly speaking, we have to enlarge $M$ and $R$ in the proof of Lemma \ref{lem:newsystem_1_torsion}, 
which is because we used the non-trivial assumption that 
\[
\{(x_j, z_j, w_j)\in W_j\mid x_j\in U_{jk}, |z_j|\leq R^{-1}, w_j=0\}\subset W_j\cap W_k
\]
by stealth in the proof of the equation $(\ref{eq:rmk_1})$. 
However, this difficulty can be avoided by using a new open covering $\{U_j^*\}$ of $C$ such that each $U_j^*$ is a relatively compact subset of $U_j$ (see \cite[p. 599]{U} for the details). After replacing $R$ with a sufficiently large number determined by this open covering $\{U_j^*\}$ and $M$ with $2M$, Lemma \ref{lem:newsystem_1_torsion} can be proven. 
\end{remark}

Let $\{G^{(m)}_j\}$ be that in Lemma \ref{lem:newsystem_1_torsion} and 
consider the function $F_j(x_j, z_j)=\sum_{m=0}^\infty G^{(m)}_j(x_j)\cdot z_j^m$. 
From Lemma \ref{lem:newsystem_1_torsion}, it can be said that $F_j$ is a holomorphic function around $C$. 
Let $\widetilde{w}_j$ be the solution of the functional equation $(\ref{eq:eqfunc1})$. 
Then, it can be showed by just the same argument as in the proof of Lemma \ref{lem:newsystem_1_torsion},  that $\{(W_j. \widetilde{w}_j)\}$ is a system of order $(n+1, 0)$, which shows Lemma \ref{lem:newsystem_2} when $(i')$ holds. 
\hfill $\Box$

\subsubsection{Proof of Lemma \ref{lem:newsystem_2} $(ii)$}

Here we prove Lemma \ref{lem:newsystem_2} in the case where the condition $(ii)$ $N_{C/S}=N_{S/X}|_C\in\mathcal{E}_1(C)$ holds. 
We consider the functional equation $(\ref{eq:eqfunc1})$ also in this case, 
and so what we have to do is determining each coefficients $\{G_j^{(\mu)}\}$ of $F_j$ 
just as in Lemma \ref{lem:newsystem_1_torsion}. 
However, in this case, the sequence of constants $\{K(N_{S/X}|_C^{-n}\otimes N_{C/S}^{-m})\}_{m=0}^\infty$ as in \cite[Lemma 3]{U} need not be bounded from above. 
So now we will use Lemma \ref{lem:ueda_4} instead of \cite[Lemma 3]{U}. 

Set $N:=N_{S/X}|_C=N_{C/S}$ and $\varepsilon_n:=d(\mathcal{O}_C, N^{-n})^{-1}$. 
Let $K$ be the constant as in Lemma \ref{lem:ueda_4}. 
By just the same argument as in the proof of Lemma \ref{lem:newsystem_1_torsion}, 
we can inductively define the functions $G_j^{(m)}$ such that $\delta\{(U_j, G_j^{(m)})\}=\{(U_{jk}, g_{jk}^{(n+1, m)}-t_{jk}^{-n}h_{1jk, m})\}$ and 
\[
\max_j\sup_{U_j}\left|G_j^{(m)}\right|\leq \varepsilon_{n+m}K\cdot\max_{jk}\sup_{U_{jk}}\left|g_{jk}^{(n+1, m)}-t_{jk}^{-n}h_{1jk, m}\right|
\]
hold for each $m$ and $j, k$. 
Thus all we have to do is showing that the function $F_j=\sum_{m=0}^\infty G^{(m)}_{j}z_j^m$ is actually a holomorphic function around $C$. 
For this purpose, we will prove that there exists a power series $B(X)=\sum_{m=0}^\infty B_mX^m$ with a positive radius of convergence such that 
$B_0=KM$ and 
\begin{equation}\label{eq:koutousiki}
\sum_{\mu=1}^\infty \varepsilon_{n+\mu}^{-1}B_\mu X^\mu=\frac{K(M+B(X))RX}{1-RX}
\end{equation}
holds, where the constants $M$ and $R$ are that in the proof of Lemma \ref{lem:newsystem_2} when $(i')$ holds. 
Note that the power series $B$ is uniquely determined as a formal power series, since all of the coefficients $B_m$ are inductively determined by the above equation $(\ref{eq:koutousiki})$. 
By using the equation $(\ref{eq:koutousiki})$, it is also shown by the induction that each coefficient $B_m$ is non-negative real number. 
From now on, we will prove that this formal power series $B(X)$ has a positive radius of convergence. 
Consider a formal power series $D(X)=X+\sum_{\lambda=2}^\infty D_\lambda X^\lambda$ defined by 
\[
\sum_{\lambda=2}^\infty \varepsilon_{\lambda-1}^{-1}D_\mu X^\lambda=\frac{KRD(X)^2(1+MD(X)^{n})}{1-RD(X)}. 
\]
Just as the above argument on $B(X)$, this power series $D(X)$ is also uniquely determined as a formal power series with non-negative real coefficients. 

\begin{claim}\label{claim:tilde_B}
$B_\mu\leq D_{\mu+n+1}$ holds for each $\mu\geq 0$. 
\end{claim}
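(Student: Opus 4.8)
The plan is a majorant argument comparing Taylor coefficients, carried out by induction on the degree. For a formal power series $f$ write $[X^\lambda]f$ for the coefficient of $X^\lambda$, and for two such series write $f\preceq g$ (equivalently $g\succeq f$) when $[X^\lambda]f\le[X^\lambda]g$ for every $\lambda$. Introduce the auxiliary series
\[
\widehat{B}(X):=X+X^{n+1}B(X)=X+\sum_{\mu\ge 0}B_\mu X^{\mu+n+1},
\]
so that (as $n\ge 1$, no cancellation occurs) $[X^1]\widehat B=1$, $[X^\lambda]\widehat B=0$ for $2\le\lambda\le n$, and $[X^{\mu+n+1}]\widehat B=B_\mu$ for every $\mu\ge 0$. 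Since $D(X)=X+\sum_{\lambda\ge 2}D_\lambda X^\lambda$ has non-negative coefficients, Claim \ref{claim:tilde_B} is exactly the assertion $\widehat B\preceq D$: for degrees $\le n$ it is automatic ($0=D_0$, $1=D_1$, $0\le D_\lambda$), and for degree $\mu+n+1$ it is the desired inequality $B_\mu\le D_{\mu+n+1}$.

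The argument rests on two observations. First, write $\Phi[Y](X):=KR\,Y(X)^2\bigl(1+MY(X)^n\bigr)\bigl(1-RY(X)\bigr)^{-1}$ for an arbitrary formal series $Y$ with $Y(0)=0$; since $\Phi$ is a power series of order $\ge 2$ in its argument and $Y$ has order $\ge 1$, the quantity $[X^\lambda]\Phi[Y]$ is a polynomial with non-negative coefficients in $Y_1,\dots,Y_{\lambda-1}$ only, hence is non-decreasing in each of these, and the equation defining $D$ reads $D_\lambda=\varepsilon_{\lambda-1}\,[X^\lambda]\Phi[D]$ for every $\lambda\ge 2$, where $\varepsilon_{\lambda-1}\in(0,\infty)$ (finiteness because $N\in\mathcal{E}_1(C)$ is non-torsion, so $N^{-k}\not\cong\mathcal{O}_C$ for $k\ge 1$). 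Second, using $\widehat B\succeq X$ together with $\widehat B^2\succeq X^2+2X^{n+2}B(X)$, $\widehat B^n\succeq X^n$ and $(1-R\widehat B)^{-1}\succeq(1-RX)^{-1}$, multiplying these out and discarding superfluous non-negative terms (the factor $2$ supplies exactly the needed slack), one obtains
\[
\Phi[\widehat B](X)\ \succeq\ \frac{KRX^2}{1-RX}\ +\ X^{n+1}\cdot\frac{KR\bigl(M+B(X)\bigr)X}{1-RX},
\]
and by equation (\ref{eq:koutousiki}) the second summand on the right equals $\sum_{\mu\ge 1}\varepsilon_{n+\mu}^{-1}B_\mu X^{\mu+n+1}$.

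Granting these, I would prove $[X^\lambda]\widehat B\le D_\lambda$ by strong induction on $\lambda$; the degrees $\lambda\le n$ are trivial. For $\lambda=\mu+n+1$ with $\mu\ge 1$: the induction hypothesis gives $D_i\ge[X^i]\widehat B\ge 0$ for all $i<\lambda$, so by the monotonicity of $[X^\lambda]\Phi[\cdot]$ we get $D_\lambda=\varepsilon_{\lambda-1}[X^\lambda]\Phi[D]\ge\varepsilon_{\lambda-1}[X^\lambda]\Phi[\widehat B]$; feeding in the displayed bound, together with $\lambda-1=n+\mu$ and $[X^{n+\mu+1}]\bigl(KRX^2(1-RX)^{-1}\bigr)=KR^{\,n+\mu}$, yields $D_\lambda\ge\varepsilon_{n+\mu}\bigl(KR^{\,n+\mu}+\varepsilon_{n+\mu}^{-1}B_\mu\bigr)\ge B_\mu$. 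The one step that is not purely formal is the base case $\mu=0$: from $\Phi[D]\succeq KR\,D^2(1-RD)^{-1}\succeq KRX^2(1-RX)^{-1}$ one gets $D_{n+1}\ge\varepsilon_n\cdot KR^{\,n}$, and one needs this to dominate $B_0=KM$, i.e.\ one needs $\varepsilon_n R^{\,n}\ge M$. I expect this to be the only delicate point. It is handled by noting that $n$ is fixed and $\varepsilon_n$ is a fixed finite positive constant, whereas $R$ was only required to be ``sufficiently large'' and every estimate obtained so far (notably $|g_{jk}^{(n+1,m)}|<MR^m$) is preserved under enlarging $R$; hence we require from the outset that $R$ be large enough that $\varepsilon_n R^{\,n}\ge M$. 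This harmless re-choice of $R$ aside, the proof is a routine coefficient computation. (With $\widehat B\preceq D$ established, the positive radius of convergence of $B$, and hence of the functions $F_j$, follows from that of $D$, which is furnished by Lemma \ref{lem:siegel} applied to $f(Z)=KRZ^2(1+MZ^n)(1-RZ)^{-1}$, whose hypotheses hold precisely because $N\in\mathcal{E}_1(C)$.)
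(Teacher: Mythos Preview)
Your argument is correct and follows the same route as the paper: introduce the auxiliary series $\widehat{B}(X)=X+X^{n+1}B(X)$ (the paper's $\widetilde{B}$) and prove $[X^\lambda]\widehat{B}\le D_\lambda$ by strong induction on $\lambda$, using that $[X^\lambda]\Phi[\cdot]$ is monotone in the lower-order coefficients and that $X\preceq\widehat{B}$ furnishes the required coefficient-wise comparison $\frac{K(MX^{n+1}+\widehat{B}-X)RX}{1-RX}\preceq\Phi[\widehat{B}]$. You are in fact more careful than the paper at the one point you single out, namely $\mu=0$: the paper's displayed identity $\sum_{\lambda\ge 1}\varepsilon_{\lambda-1}^{-1}\widetilde{B}_\lambda X^{\lambda}=\frac{K(MX^{n+1}+\widetilde{B}-X)RX}{1-RX}$ is literally valid only for the coefficients with $\lambda\ge n+2$ (at $\lambda=n+1$ the left side contributes $\varepsilon_n^{-1}B_0=\varepsilon_n^{-1}KM$ while the right side has order $\ge n+2$), so the induction step as written does not cover $\ell=n+1$. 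Your device of enlarging $R$ so that $\varepsilon_n R^{n}\ge M$ (harmless since $n$ and $\varepsilon_n$ are fixed and all earlier Cauchy estimates only improve) legitimately secures $B_0=KM\le \varepsilon_n KR^{n}\le D_{n+1}$. For the intended application---positive radius of convergence of $B$---the bound for $\mu\ge 1$ already suffices, so this is a cosmetic rather than essential refinement.
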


\begin{proof}
Consider the power series $\widetilde{B}(X):=X+B(X)\cdot X^{n+1}$. 
Denote by $\widetilde{B}_\lambda$ the coefficient of $X^\lambda$ in the expansion of $\widetilde{B}(X)$. 
We will prove the inequality $\widetilde{B}_\lambda\leq D_\lambda$ for each $\lambda$ by induction. 
First, it is clear that this inequality holds for $\lambda=1, 2, \dots, n$. 
Next, let us assume that $\widetilde{B}_\lambda\leq D_\lambda$ holds for $\lambda<\ell$ and prove that $\widetilde{B}_\ell\leq D_\ell$. 
It follows from the equation $(\ref{eq:koutousiki})$ that 
\[
\sum_{\lambda=1}^\infty \varepsilon^{-1}_{\lambda-1}\widetilde{B}_\lambda X^{\lambda}=\frac{K(MX^{n+1}+\widetilde{B}(X)-X)RX}{1-RX}. 
\]
Thus all we have to do is showing 
\begin{eqnarray}
&&\text{the coefficient of }X^\ell\ \text{in the expansion of}\ 
\frac{K(MX^{n+1}+\widetilde{B}(X)-X)RX}{1-RX}\nonumber \\
&\leq&
\text{the coefficient of }X^\ell\ \text{in the expansion of}\ 
\frac{KRD(X)^2(1+MD(X)^{n})}{1-RD(X)}. \nonumber 
\end{eqnarray}
Since the left (resp. right) hand side of the above inequality depends only on $\{\widetilde{B}_\lambda\}_{\lambda<\ell}$ (resp. $\{D_\lambda\}_{\lambda<\ell}$), it follows from the assumption of the induction that it is sufficient to show the inequality 
\begin{eqnarray}
&&\text{the coefficient of }X^\ell\ \text{in the expansion of}\ \frac{K(MX^{n+1}+\widetilde{B}(X)-X)RX}{1-RX}\nonumber \\
&\leq&\text{the coefficient of }X^\ell\ \text{in the expansion of}\ 
\frac{KR\widetilde{B}(X)^2(1+M\widetilde{B}(X)^{n})}{1-R\widetilde{B}(X)}, \nonumber
\end{eqnarray}
which is easily obtained from the fact that the coefficients of $\widetilde{B}(X)-X$ and $X$ are less than or equal to that of $\widetilde{B}(X)$. 
\end{proof}

According to Claim \ref{claim:tilde_B}, it is sufficient for proving Lemma
\ref{lem:newsystem_2} to show that the formal power series ${D}(X)$ has a positive radius of convergence. 
We show this by using Lemma \ref{lem:siegel}. 
Note that our $\{\varepsilon_\lambda\}_{\lambda\geq 1}$ enjoys the two conditions in Lemma \ref{lem:siegel} (here we used the assumption $(ii)$, see also \cite[p. 603]{U}). Thus we can apply Lemma \ref{lem:siegel} on 
\[
f(Z):=\frac{KRZ^2(1+MZ^{n})}{1-RZ}
\]
and conclude that $D(X)$ has a positive radius of convergence. 
Thus $B(X)$ and $F_j$ also have a positive radius of convergence. 
Therefore we can construct a system of order $(n+1, 0)$ by solving the functional equation $(\ref{eq:eqfunc1})$, which completes the proof of Lemma \ref{lem:newsystem_2} in the case where the condition $(ii)$ holds. 
\hfill $\Box$

\subsubsection{Proof of Lemma \ref{lem:newsystem_2} $(iii')$}
Finally we prove Lemma \ref{lem:newsystem_2} in the case where the condition $(iii')$ holds. 
In this case, we can apply \ref{prop:Rossi} and thus we may assume that the neighborhood $V$ satisfies the following property: there exists an integer $m$ such that the natural map $H^1(V, N_{S/X}|_V^{-n})\to H^1(V, N_{S/X}|_V^{-n}\otimes\mathcal{O}_{V}/I_C^m)$ is injective, where $I_C$ is the defining ideal sheaf of $C\subset V$. 
Fix a system $\{(W_j, w_j)\}$ of order $(n, m+1)$. 
It clearly follows from the equation $(\ref{eq:mero_exp})$ that the cohomology class 
$[\{(V_j, (w_j^{-n}-t_{jk}^{-n}w_k^{-n})|_{V_{jk}})\}]_m\in H^1(V, N_{S/X}|_V^{-n}\otimes\mathcal{O}_{V}/I_C^m)$ induced from $[\{(V_j, (w_j^{-n}-t_{jk}^{-n}w_k^{-n})|_{V_{jk}})\}]\in H^1(V, N_{S/X}|_V^{-n})$ is the trivial one. 
Thus we can conclude that the cohomology class 
$[\{(V_j, (w_j^{-n}-t_{jk}^{-n}w_k^{-n})|_{V_{jk}})\}]\in H^1(V, N_{S/X}|_V^{-n})$ itself is also the trivial one. 
Therefore there exists a system $\{(V_{j}, F_j(x_j, z_j))\}$ of holomorphic functions such that 
 \[
   (w_j^{-n}-t_{jk}^{-n}w_k^{-n})|_{V_{jk}}=F_j-t_{jk}^{-n}F_k
 \]
holds on each $V_{jk}$. 
Define 
 \[
  \widetilde{w}_j:=w_j\cdot (1-F_j\cdot w_j^n)^{-\frac{1}{n}}
 \]
by shrinking $V$ and $W$ if necessary. 
Since
 \[
  \widetilde{w}_j=w_j\cdot \left(1+\frac{1}{n}F_j\cdot w_j^n+\cdots\right)
  =w_j+O(w_j^{n+2})
 \]
holds, it is clear that our new system $\{\widetilde{w}_j\}$ is also of order $(n, 0)$. 
As $\widetilde{w}_j^{-n}=w_j^{-n}-F_j$ 
holds, we obtain that
 \[
  \frac{1}{n}(\widetilde{w}_j^{-n}-t_{jk}^{-n}\widetilde{w}_k^{-n})|_{V_{jk}}\equiv 0, 
 \]
which means that the system $\{\widetilde{w}_j\}$ is of order $(n+1, 0)$. 
\hfill $\Box$

\section{Proof of Theorem \ref{main}}

In this section we prove Theorem \ref{main}. 
The strategy of the proof is almost the same as that of Lemma \ref{lem:newsystem_2}: 
i.e. fixing a system $\{(W_j, w_j)\}$ of order $(1, 0)$, 
we will construct a new system $\{(W_j, \widetilde{w}_j)\}$ by solving the functional equation 
\begin{equation}\label{eq:eqfunc11}
w_j=u_j+\sum_{\nu=2}^\infty F_j^{(\nu)}(x_j, z_j)\cdot u_j^{\nu}
\end{equation}
after choosing a suitable system of holomorphic functions $\{(V_{j}, F^{(\nu)}_j(x_j, z_j))\}$. 
Let
\[
F_j^{(\nu)}(x_j, z_j)=\sum_{\mu=0}G_j^{(\nu, \mu)}(x_j)\cdot z_j^\mu
\]
be the expansion of $F^{(\nu)}_j(x_j, z_j)$. 
\hfill $\Box$

\subsection{Proof of Theorem \ref{main} $(i)$}

First we prove Theorem \ref{main} in the case where the condition $(i)$ $N_{C/S}, N_{S/X}|_C\in\mathcal{E}_0(C)$ holds. 
Let $K_{n, m}=K(N_{S/X}|_C^{-n}\otimes N_{C/S}^{-m})$ be the constant as in \cite[Lemma 3]{U} and 
set $K:=\max_{n, m} K_{n, m}$ (here the condition $(i)$ is needed). 
Let 
\begin{equation}\label{eq:thm_tenkai}
t_{jk}w_k-w_j=\sum_{\nu=2}^\infty f_{jk}^{(\nu)}(x_j, z_j)\cdot w_j^{\nu}=\sum_{\nu=2}^\infty\sum_{\mu=0}^\infty g_{jk}^{(\nu, \mu)}(x_j)\cdot z_j^\mu\cdot w_j^{\nu}
\end{equation}
be the expansions of $t_{jk}w_k-w_j$. 
Let $M$ and $R$ be sufficiently large positive number such that $\sup_{W_{jk}}|t_{jk}w_k-w_j|<M$ and $\{|z_j|<2R^{-1}, |w_j|<2R^{-1}\}\subset W_{j}$ hold for each $j, k$. 
Note that, from Cauchy's inequality, it holds that $|g^{(n, m)}_{jk}|<MR^{n+m+1}$. 
Consider the solution $A(X, Y)$ of the functional equation 
 \begin{equation}\label{eq:funceq_A}
  A(X, Y)-X=\frac{RK}{1-RY}\left((A(X, Y)-X)Y+\frac{(1+MR)A(X, Y)^2}{1-RA(X, Y)}\right)
 \end{equation}
 and denote by $A_{n, m}$ the coefficient of $X^nY^m$ in the Taylor expansion of $A(X)$ at $X=Y=0$. 
Though the functional equation $(\ref{eq:funceq_A})$ has two solutions, the solution $A$ is uniquely determined by the condition that $A(X, Y)=X+O(X^2)$. 
Note that $A_{n, m}$ is a non-negative real number for each $n$ and $m$. 

\begin{lemma}\label{lem:newsystem_2_torsion}
There exists a system of functions $\{G_j^{(n, m)}(x_j)\}_{n\geq 2, m\geq 0}$ for each $j$ satisfying the following conditions. 
Let 
\begin{eqnarray}
G_j^{(n, m)}(x_j)\cdot z_j^m&=&G_j^{(n, m)}(x_j(x_k, z_k, w_k))\cdot z_j(x_k, z_k, w_k)^m \nonumber \\
&=&G_j^{(n, m)}(x_j(x_k, 0, 0))\cdot s_{jk}^m z_k^m\nonumber \\
&&+\sum_{q=1}^\infty G_{jk, 0, q}^{(n, m)}(x_k)\cdot z_k^{m+q}
+\sum_{p=1}^\infty\sum_{q=0}^\infty G_{jk, p, q}^{(n, m)}(x_k)\cdot z_k^{m+q}w_k^{p}\nonumber
\end{eqnarray}
be the expansion of $(G_j^{(n, m)}z_j^m)|_{W_{jk}}$ in the variable $z_k$ and $w_k$ by regarding $G_j^{(n, m)}z_j^m$ as a function defined on $W_{j}$ which does not depend on $w_j$. 
Denote by $h_{1jk, n, m}$ the coefficient of $z_j^m$ in the expansion of 
\[
\sum_{\mu=0}^{m-1}\sum_{q=1}^\infty  G_{kj, 0, q}^{(n, \mu)}(x_j)\cdot z_j^{\mu+q}, 
\]
by $h_{2jk, n, m}$ the coefficient of $z_j^mu_j^n$ in the expansion of 
\[
\sum_{\nu=2}^{n-1}\sum_{\mu=0}^m\sum_{p=1}^\infty\sum_{q=0}^\infty G_{kj, p, q}^{(\nu, \mu)}(x_j)\cdot z_j^{\mu+q}u_j^\nu\cdot\left(u_j+\sum_{a=2}^{n-1}\sum_{b=0}^mG_j^{(a, b)}(x_j)\cdot z_j^bu_j^a\right)^{p}, 
\]
and by $h_{3jk, n, m}$ the coefficient of $z_j^mu_j^n$ in the expansion of 
\[
\sum_{\nu=2}^{\infty}\sum_{\mu=0}^{\infty}g_{jk}^{(\nu, \mu)}(x_j)\cdot z_j^\mu\cdot\left(u_j+\sum_{p=2}^{n-1}\sum_{q=0}^{m}G_j^{(p, q)}(x_j)\cdot z_j^qu_j^p\right)^\nu
-\sum_{\nu=2}^{n}\sum_{\mu=0}^{m}g_{jk}^{(\nu, \mu)}(x_j)\cdot z_j^\mu u_j^\nu. 
\]
 Then the coboundary $\delta\{(U_j, G_j^{(n, m)})\}$ is equal to 
\[
\{(U_{jk}, g_{jk}^{(n, m)}-t_{jk}^{-n+1}h_{1jk, n, m}-t_{jk}^{-n+1}h_{2jk, n, m}+h_{3jk, n, m})\}, 
\]
 and 
$\max_j\sup_{U_j}\left|G_j^{(n, m)}\right|\leq A_{n, m}$ for each $n\geq 1, m\geq 0$. 
\end{lemma}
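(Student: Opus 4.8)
The plan is to construct the functions $G_j^{(n,m)}$ by induction on the pair $(n,m)$ taken in the lexicographic order, in close imitation of the proof of Lemma \ref{lem:newsystem_1_torsion}; the genuinely new feature is that one now iterates simultaneously in the $w_j$-direction (the index $\nu$) and in the $z_j$-direction (the index $\mu$), hence the two variables in the majorant $A(X,Y)$ and in the errors $h_{ijk,n,m}$. Recall that, under hypothesis $(i)$, both $N_{C/S}$ and $N_{S/X}|_C$ are torsion, hence flat, elements of $\mathrm{Pic}^0(C)$, so $|s_{jk}|=|t_{jk}|=1$ and, since a torsion class generates a finite cyclic subgroup, $\{N_{S/X}|_C^{-n}\otimes N_{C/S}^{-m}\}_{n,m}$ is a \emph{finite} set of flat line bundles on $C$; this is what makes $K=\max_{n,m}K_{n,m}$ finite. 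As in Remark \ref{rmk:1} we tacitly pass to a relatively compact refinement $\{U_j^*\}$ of $\{U_j\}$ and enlarge $R,M$, which legitimizes all Cauchy estimates of the shape $|G_{kj,p,q}^{(\nu,\mu)}(x_j)|\le |G_k^{(\nu,\mu)}(x_k)|\cdot R^{p+q}$, cf.\ (\ref{eq:rmk_1}).

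For the base case $(n,m)=(2,0)$ the sums defining $h_{1jk,2,0},h_{2jk,2,0},h_{3jk,2,0}$ are empty, and, since $\{(W_j,w_j)\}$ has order $(1,0)$, the prescribed cocycle $\{(U_{jk},g_{jk}^{(2,0)})\}$ is precisely the one computing $u_{1,0}(C,S,X)=0$ (via the computation of Proposition \ref{prop:cocycle} applied to (\ref{eq:thm_tenkai})). For the inductive step, assume $G_j^{(a,b)}$ with $\max_j\sup_{U_j}|G_j^{(a,b)}|\le A_{a,b}$ has been constructed for every $(a,b)<(n,m)$; since the $A_{a,b}$ are the Taylor coefficients of the function $A(X,Y)$ solving (\ref{eq:funceq_A}), which is holomorphic near the origin, the partial modification $\widetilde{w}_j$ obtained by solving (\ref{eq:eqfunc11}) truncated to the already-constructed terms is a well-defined holomorphic coordinate on a neighborhood of $C$. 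Expanding $t_{jk}\widetilde{w}_k-\widetilde{w}_j$ in the variables $z_j$ and $\widetilde{w}_j$ — re-expressing $w_k$ in $\widetilde{w}_k$, then $(x_k,z_k,\widetilde{w}_k)$ in $(x_j,z_j,\widetilde{w}_j)$, then substituting into (\ref{eq:thm_tenkai}) — one finds, exactly as in Lemma \ref{lem:newsystem_1_torsion}, that the coefficient of $z_j^m\widetilde{w}_j^n$ equals $g_{jk}^{(n,m)}-t_{jk}^{-n+1}h_{1jk,n,m}-t_{jk}^{-n+1}h_{2jk,n,m}+h_{3jk,n,m}$: here $h_{1jk,n,m}$ collects the $z_k$-Taylor expansion of the base point $x_j(x_k,z_k,0)$, $h_{2jk,n,m}$ its $w_k$-dependence together with the re-expansion of $\widetilde{w}_k$ in $\widetilde{w}_j$, and $h_{3jk,n,m}$ the substitution of the truncated expansion into $t_{jk}w_k-w_j$, the powers of $s_{jk}$ arising as in Proposition \ref{prop:cocycle}. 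Since $\{\widetilde{w}_j\}$ has order $(n-1,m)$, this $1$-cocycle of $N_{S/X}|_C^{-n+1}\otimes N_{C/S}^{-m}$ represents a nonzero constant multiple of $u_{n-1,m}(C,S,X)=0$, so \cite[Lemma 3]{U} yields $G_j^{(n,m)}$ with $\delta\{(U_j,G_j^{(n,m)})\}$ equal to it and $\max_j\sup_{U_j}|G_j^{(n,m)}|\le K\cdot\max_{jk}\sup_{U_{jk}}|g_{jk}^{(n,m)}-t_{jk}^{-n+1}h_{1jk,n,m}-t_{jk}^{-n+1}h_{2jk,n,m}+h_{3jk,n,m}|$.

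It remains to promote this to the majorant bound $\max_j\sup_{U_j}|G_j^{(n,m)}|\le A_{n,m}$. For this one bounds each of $|g_{jk}^{(n,m)}|$, $|h_{1jk,n,m}|$, $|h_{2jk,n,m}|$, $|h_{3jk,n,m}|$ from above by the $X^nY^m$-coefficient of an explicit power series built from $A(X,Y)$ and $M,R$: Cauchy's inequality gives $|g_{jk}^{(n,m)}|\le MR^{n+m+1}$; the estimate $|G_{kj,0,q}^{(n,\mu)}(x_j)|\le A_{n,\mu}R^{q}$ for $\mu<m$ bounds $|h_{1jk,n,m}|$ by the relevant coefficient of $(A(X,Y)-X)\cdot RY/(1-RY)$; and $|G_{kj,p,q}^{(\nu,\mu)}(x_j)|\le A_{\nu,\mu}R^{p+q}$ for $\nu<n$, together with the induction hypothesis, bound $|h_{2jk,n,m}|+|h_{3jk,n,m}|$ by the relevant coefficient of $(1+MR)A(X,Y)^2/(1-RA(X,Y))$ (harmless constants being absorbed in $M,R$). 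Multiplying by $K$ and adding, the right-hand side of (\ref{eq:funceq_A}) majorizes $K(|g_{jk}^{(n,m)}|+|h_{1jk,n,m}|+|h_{2jk,n,m}|+|h_{3jk,n,m}|)$ coefficient by coefficient, and since $A(X,Y)$ \emph{solves} (\ref{eq:funceq_A}) this majorant is exactly $A_{n,m}$; the induction closes.

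The main obstacle will be the bookkeeping in the inductive step, namely isolating the coefficient of $z_j^m\widetilde{w}_j^n$ in $t_{jk}\widetilde{w}_k-\widetilde{w}_j$: one must re-expand the partially modified coordinate $\widetilde{w}_k$ both in its base point (Taylor-expanding $x_j$ as a function of $(x_k,z_k,w_k)$, which is what produces the two distinct error families $h_1$ and $h_2$) and in $\widetilde{w}_j$, all the while tracking every power of $t_{jk}$ and $s_{jk}$ so that the cocycle genuinely lands in $H^1(C,N_{S/X}|_C^{-n+1}\otimes N_{C/S}^{-m})$, and finally verify that the three error families are majorized precisely by the pieces assembled into the functional equation (\ref{eq:funceq_A}). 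Given the one-variable model of Lemma \ref{lem:newsystem_1_torsion}, each of these steps is routine but lengthy.
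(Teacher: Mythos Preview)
Your proposal is correct and follows essentially the same approach as the paper's own proof: induction on $(n,m)$ in lexicographic order, a truncated change of variables $\widetilde{w}_j$ via (\ref{eq:eqfunc12}), identification of the coefficient of $z_j^m\widetilde{w}_j^n$ in $t_{jk}\widetilde{w}_k-\widetilde{w}_j$ with a cocycle representing $u_{n-1,m}(C,S,X)$, application of \cite[Lemma~3]{U}, and majorant estimates closed by the functional equation (\ref{eq:funceq_A}). The only notable differences are organizational: the paper makes explicit the inner sub-induction (on $a\le n$) needed to verify that $\{\widetilde{w}_j\}$ has order $(n-1,0)$ before extracting the $z_j^m\widetilde{w}_j^n$ coefficient, and it groups the estimates as $|h_1|$, $|h_2|$, and $|g_{jk}^{(n,m)}+h_3|$ (rather than your $|g|$, $|h_1|$, $|h_2|+|h_3|$), each carrying the common factor $1/(1-RY)$ so that their sum matches (\ref{eq:funceq_A}) exactly without any absorption of constants.
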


\begin{proof}
It immediately follows from the definition that $\{(U_{jk}, g_{jk}^{(2, 0)})\}$ is a $1$-cocycle which defines the cohomology class $u_{1, 0}(C, S, X)$, which is equal to $0\in H^1(C, N_{S/X}|_C^{-1})$ from the assumption. 
Thus there exists an $1$-cochain $\{(U_j, G_j^{(2, 0)})\}$ of $N_{S/X}|_C^{-n}$ such that 
\[
\delta\{(U_j, G_j^{(2, 0)})\}=\{(U_{jk}, g_{jk}^{(2, 0)})\},\ 
\max_j\sup_{U_j}\left|G_j^{(2, 0)}\right|\leq K\cdot\max_{jk}\sup_{U_{jk}}\left|g_{jk}^{(2, 0)}\right|
\]
hold. As $h_{1jk, 2, 0}=h_{2jk, 2, 0}=h_{3jk, 2, 0}=0$ and $K\cdot |g_{jk}^{(2, 0)}|\leq KMR^2\leq RK(1+MR)=A_{2, 0}$, $\{(G_j^{(2, 0)})\}$ is what we wanted. 

Fix $(n, m)$ and assume that there exists a system 
$\{G_j^{(\nu, \mu)}\}$ as in Lemma \ref{lem:newsystem_1_torsion} for each $(\nu, \mu)$ less than $(n, m)$ in the lexicographical order. 
From now on, we will construct $\{G_j^{(n, m)}\}$. 
For simplicity, we assume that $m>0$ (the construction of $\{G_j^{(n+1, 0)}\}$ is just the same as the following construction formally replaced $(n, m)$ with $(n, \infty)$ ). 
Denote by $\widetilde{w}_j$ the solution of the functional equation 
\begin{equation}\label{eq:eqfunc12}
w_j=u_j+\sum_{\nu=2}^{n-1}F_j^{(\nu)}(x_j, z_j)\cdot u_j^\nu+\left(\sum_{\mu=0}^{m-1} G_j^{(n, \mu)}(x_j)\cdot z_j^\mu\right)\cdot u_j^{n}
\end{equation}
with an unknown function $u_j$. 

First we will show that our new system $\{(W_j, \widetilde{w}_j)\}$ is of order $(n-1, 0)$. 
Note that $\{(W_j, \widetilde{w}_j)\}$ is clearly of order $(1, 0)$. 
Thus all we have to do is to show that $\{(W_j, \widetilde{w}_j)\}$ is of order $(a-1, 0)$ assuming that $\{(W_j, \widetilde{w}_j)\}$ is of order $(a-2, 0)$ for each $a\leq n$. 

From the functional equation $(\ref{eq:eqfunc12})$, the function $(t_{jk}w_k)|_{W_{jk}}$ can be expanded as follows: 
\begin{eqnarray}
& &t_{jk}w_k \nonumber \\
&=& t_{jk}\widetilde{w}_k+\sum_{\nu=2}^{a-1}\sum_{\mu=0}^{\infty} G_k^{(\nu, \mu)}(x_k)\cdot z_k^\mu\cdot t_{jk}\widetilde{w}_k^\nu+O(\widetilde{w}_k^a) \nonumber \\
&=& t_{jk}\widetilde{w}_k+\sum_{\nu=2}^{a-1}\sum_{\mu=0}^{\infty}G_k^{(\nu, \mu)}(x_k(x_j, 0, 0))\cdot t_{jk}s_{jk}^{-\mu}z_j^{-\mu}\widetilde{w}_k^\nu 
+\sum_{\nu=2}^{a-1}\sum_{\mu=0}^\infty \sum_{q=1}^\infty  G_{kj, 0, q}^{(n, \mu)}(x_j)\cdot z_j^{\mu+q}t_{jk}\widetilde{w}_k^\nu \nonumber \\
&&+\sum_{\nu=2}^{a-1}\sum_{\mu=0}^{\infty}\sum_{p=1}^\infty\sum_{q=0}^\infty G_{kj, p, q}^{(\nu, \mu)}(x_j)\cdot z_j^{\mu+q}w_j^p\cdot t_{jk}\widetilde{w}_k^\nu +O(\widetilde{w}_k^a)\nonumber \\
&=& t_{jk}\widetilde{w}_k+\sum_{\nu=2}^{a-1}\sum_{\mu=0}^{\infty}G_k^{(\nu, \mu)}(x_k(x_j, 0, 0))\cdot t_{jk}^{-\nu+1}s_{jk}^{-\mu}z_j^{-\mu}\widetilde{w}_j^\nu 
+\sum_{\nu=2}^{a-1}\sum_{\mu=0}^\infty \sum_{q=1}^\infty  G_{kj, 0, q}^{(n, \mu)}(x_j)\cdot z_j^{\mu+q}t_{jk}\widetilde{w}_k^\nu \nonumber \\
&&+\sum_{\nu=2}^{a-1}\sum_{\mu=0}^{\infty}\sum_{p=1}^\infty\sum_{q=0}^\infty G_{kj, p, q}^{(\nu, \mu)}(x_j)\cdot z_j^{\mu+q}w_j^p\cdot t_{jk}^{-\nu+1}\widetilde{w}_j^\nu +O(\widetilde{w}_k^a)\nonumber \\
&=& t_{jk}\widetilde{w}_k+\sum_{\nu=2}^{a-1}\sum_{\mu=0}^{\infty}G_k^{(\nu, \mu)}(x_k(x_j, 0, 0))\cdot t_{jk}^{-\nu+1}s_{jk}^{-\mu}z_j^{-\mu}\widetilde{w}_j^\nu 
+\sum_{\nu=2}^{a-1}\sum_{\mu=0}^\infty h_{1jk, \nu, \mu}t_{jk}^{-\nu+1}z_j^\mu\widetilde{w}_j^\nu \nonumber \\
&&+\sum_{\nu=0}^{a-1}\sum_{\mu=0}^\infty t_{jk}^{-\nu+1}h_{2jk, \nu, \mu}z_j^\mu  \widetilde{w}_j^\nu+O(\widetilde{w}_k^a).  \nonumber 
\end{eqnarray}
Here we used the fact that $h_{1jk, \nu, \mu}$ and $h_{2jk, \nu, \mu}$ depend only on $\{G^{(p, q)}_j\}_{(p, q)<(\nu, \mu)}$. 
Note that we also used the fact that $\widetilde{w}_k^\nu=t_{jk}^{-\nu}\widetilde{w}_j^\nu+O(\widetilde{w}_j^a)$ holds for each $\nu\geq 2$, which directly follows from the assumption that $\{(W_j, \widetilde{w}_j)\}$ is of order $(a-2, 0)$. 
On the other hand, the function $(t_{jk}w_k)|_{W_{jk}}$ can also be expanded as follows: 
\begin{eqnarray}\label{eq:tw_tenkai_12}
& &t_{jk}w_k  \\
&=& w_j+\sum_{\nu=2}^{n-1} f_{jk}^{(\nu)}(x_j, z_j)\cdot w_j^{\nu}+\left(\sum_{\mu=0}^m g_{jk}^{(n, \mu)}(x_j)\cdot z_j^\mu+O(z_j^{m+1})\right)\cdot w_j^{n}+O(w_j^{n+1}) \nonumber 
\end{eqnarray}
\begin{eqnarray}
&=& w_j+\sum_{\nu=2}^{n-1} f_{jk}^{(\nu)}(x_j, z_j)\cdot \left(\widetilde{w}_j+\sum_{\nu=2}^{n-1}F_j^{(\nu)}(x_j, z_j)\cdot \widetilde{w}_j^\nu+\left(\sum_{\mu=0}^{m-1} G_j^{(n, \mu)}(x_j)\cdot z_j^\mu\right)\cdot \widetilde{w}_j^{n}\right)^{\nu} \nonumber \\
&&+\left(\sum_{\mu=0}^m g_{jk}^{(n, \mu)}(x_j)\cdot z_j^\mu+O(z_j^{m+1})\right)\cdot \widetilde{w}_j^{n}+O(\widetilde{w}_j^{n+1}) \nonumber \\
&=& w_j+\sum_{\nu=2}^{n-1}\left(f_{jk}^{(\nu)}(x_j, z_j)+\sum_{\mu=0}^\infty h_{3jk, \nu, \mu} z_j^\mu\right)\cdot \widetilde{w}_j^\nu\nonumber \\
&& +\left(\sum_{\mu=0}^{m} h_{3jk, n, \mu} z_j^\mu+\sum_{\mu=0}^m g_{jk}^{(n, \mu)}(x_j)\cdot z_j^\mu+O(z_j^{m+1})\right)\cdot \widetilde{w}_j^{n}+O(\widetilde{w}_j^{n+1}) \nonumber \\
&=& \widetilde{w}_j+\sum_{\nu=2}^{n-1}\left(F_j^{(\nu)}(x_j, z_j)+f_{jk}^{(\nu)}(x_j, z_j)+\sum_{\mu=0}^\infty h_{3jk, \nu, \mu} z_j^\mu\right)\cdot \widetilde{w}_j^\nu \nonumber \\
&&+\left(\sum_{\mu=0}^{m-1} \left(G_{j}^{(n, \mu)}(x_j)+g_{jk}^{(n, \mu)}(x_j)+h_{3jk, n, \mu}\right)\cdot z_j^\mu\right)\cdot \widetilde{w}_j^{n} \nonumber \\
&&+\left(\left(g_{jk}^{(n, m)}(x_j)+h_{3jk, n, m}\right)\cdot z_j^m+O(z_j^{m+1})\right)\cdot \widetilde{w}_j^{n}+O(\widetilde{w}_j^{n+1}). \nonumber  
\end{eqnarray}
Here we used the fact that $h_{3jk, \nu, \mu}$ depends only on $\{G^{(p, q)}_j\}_{(p, q)<(\nu, \mu)}$. 
From these two expansions, it follows that 
\begin{eqnarray}
t_{jk}\widetilde{w}_k-\widetilde{w}_j&=&\sum_{\nu=2}^{a-1}\sum_{\mu=0}^{\infty} \left(-t_{jk}^{-\nu+1}s_{jk}^{-\mu}G_k^{(\nu, \mu)}(x_k(x_j, 0, 0))-t_{jk}^{-\nu+1}h_{1jk, \nu, \mu}-t_{jk}^{-\nu+1}h_{2jk, \nu, \mu}\right.\nonumber \\
&&\hskip15mm\left.+G_{j}^{(\nu, \mu)}(x_j)+g_{jk}^{(\nu, \mu)}(x_j)+h_{3jk, \nu, \mu}\right)\cdot z_j^\mu\widetilde{w}_j^\nu +O(\widetilde{w}_j^{a}). \nonumber 
\end{eqnarray}
As it follows from the assumption of the induction that the coboundary $\delta\{(U_j, G_j^{(\nu, \mu)})\}$ is equal to 
\[
\{(U_{jk}, g_{jk}^{(\nu, \mu)}-t_{jk}^{-\nu+1}h_{1jk, \nu, \mu}-t_{jk}^{-\nu+1}h_{2jk, \nu, \mu}+h_{3jk, \nu, \mu})\}, 
\]
for $\nu\leq a (<n)$ and $\nu\geq 0$, 
we can conclude that $t_{jk}\widetilde{w}_k-\widetilde{w}_j=O(\widetilde{w}_j^{a}) $, which means that the system $\{\widetilde{w}_j\}$ is of order $(a-1, 0)$. 
Therefore, it can be said that the system $\{\widetilde{w}_j\}$ is of order $(n-1, 0)$. 

Using this fact and the functional equation $(\ref{eq:eqfunc12})$, let us consider again the expansion of function $(t_{jk}w_k)|_{W_{jk}}$: 

\begin{eqnarray}
& &t_{jk}w_k \nonumber \\
&=& t_{jk}\widetilde{w}_k+\sum_{\nu=2}^{n-1}\sum_{\mu=0}^{\infty} G_k^{(\nu, \mu)}(x_k)\cdot z_k^\mu\cdot t_{jk}\widetilde{w}_k^\nu+\left(\sum_{\mu=0}^{m-1} G_k^{(n, \mu)}(x_k)\cdot z_k^\mu\right)\cdot t_{jk}\widetilde{w}_k^{n} \nonumber 
\end{eqnarray}
\begin{eqnarray}
&=& t_{jk}\widetilde{w}_k+\sum_{\nu=2}^{n-1}\sum_{\mu=0}^{\infty}G_k^{(\nu, \mu)}(x_k(x_j, 0, 0))\cdot t_{jk}s_{jk}^{-\mu}z_j^{-\mu}\widetilde{w}_k^\nu 
+\sum_{\nu=2}^{n-1}\sum_{\mu=0}^\infty h_{1jk, \nu, \mu}t_{jk}z_j^\mu\widetilde{w}_k^\nu \nonumber \\
&&+\sum_{\nu=2}^{n-1}\sum_{\mu=0}^{\infty}\sum_{p=1}^\infty\sum_{q=0}^\infty G_{kj, p, q}^{(\nu, \mu)}(x_j)\cdot z_j^{\mu+q}w_j^p\cdot t_{jk}\widetilde{w}_k^\nu \nonumber \\
 &&+\left(\sum_{\mu=0}^{m-1}\left( G_k^{(n, \mu)}(x_k(x_j, 0, 0))\cdot s_{jk}^{-\mu}+h_{1jk, n, \mu}z_j^\mu\right)\cdot z_j^{\mu}\right)\cdot t_{jk}\widetilde{w}_k^{n}\nonumber \\
&&+\left( h_{1jk, n, m}z_j^m+O(z_j^{m+1})\right)\cdot t_{jk}\widetilde{w}_k^{n}+O(\widetilde{w}_k^{n+1}) \nonumber \\
&=& t_{jk}\widetilde{w}_j+\sum_{\nu=2}^{n-1}\sum_{\mu=0}^{\infty}G_k^{(\nu, \mu)}(x_k(x_j, 0, 0))\cdot t_{jk}^{-\nu+1}s_{jk}^{-\mu}z_j^{-\mu}\widetilde{w}_j^\nu 
+\sum_{\nu=2}^{n-1}\sum_{\mu=0}^\infty h_{1jk, \nu, \mu}t_{jk}^{-\nu+1}z_j^\mu\widetilde{w}_j^\nu \nonumber \\
&&+\sum_{\nu=2}^{n-1}\sum_{\mu=0}^{\infty}\sum_{p=1}^\infty\sum_{q=0}^\infty G_{kj, p, q}^{(\nu, \mu)}(x_j)\cdot z_j^{\mu+q}w_j^p\cdot t_{jk}^{-\nu+1}\widetilde{w}_j^\nu \nonumber \\
 &&+\left(\sum_{\mu=0}^{m-1}\left( G_k^{(n, \mu)}(x_k(x_j, 0, 0))\cdot s_{jk}^{-\mu}+h_{1jk, n, \mu}z_j^\mu\right)\cdot z_j^{\mu}\right)\cdot t_{jk}^{-\nu+1}\widetilde{w}_j^{n} \nonumber \\
&&+\left( h_{1jk, n, m}z_j^m+O(z_j^{m+1})\right)\cdot t_{jk}^{-\nu+1}\widetilde{w}_j^{n}+O(\widetilde{w}_j^{n+1}) \nonumber \\
&=& t_{jk}\widetilde{w}_j+\sum_{\nu=2}^{n-1}\sum_{\mu=0}^{\infty}G_k^{(\nu, \mu)}(x_k(x_j, 0, 0))\cdot t_{jk}^{-\nu+1}s_{jk}^{-\mu}z_j^{-\mu}\widetilde{w}_j^\nu 
+\sum_{\nu=2}^{n-1}\sum_{\mu=0}^\infty h_{1jk, \nu, \mu}t_{jk}^{-\nu+1}z_j^\mu\widetilde{w}_j^\nu \nonumber \\
&&+\sum_{\nu=0}^{n}\left(\sum_{\mu=0}^\infty t_{jk}^{-\nu+1}h_{2jk, \nu, \mu}z_j^\mu \right)\cdot \widetilde{w}_j^\nu \nonumber \\
 &&+\left(\sum_{\mu=0}^{m-1}\left( G_k^{(n, \mu)}(x_k(x_j, 0, 0))\cdot s_{jk}^{-\mu}+h_{1jk, n, \mu}z_j^\mu\right)\cdot z_j^{\mu}\right)\cdot t_{jk}^{-\nu+1}\widetilde{w}_j^{n} \nonumber \\
&& +\left( h_{1jk, n, m}z_j^m+O(z_j^{m+1})\right)\cdot t_{jk}^{-\nu+1}\widetilde{w}_j^{n}+O(\widetilde{w}_j^{n+1}).  \nonumber 
\end{eqnarray}
Comparing this expansion with the previous expansion $(\ref{eq:tw_tenkai_12})$, we obtain that 
\begin{eqnarray}
t_{jk}\widetilde{w}_k-\widetilde{w}_j
&=&\left(-t_{jk}^{-n+1}h_{1jk, n, m}-t_{jk}^{-n+1}h_{2jk, n, m}+g_{jk}^{(n, m)}(x_j)+h_{3jk, n, m}\right)\cdot z_j^m\widetilde{w}_j^{n}\nonumber \\
&&+O(z_j^{m+1})\cdot \widetilde{w}_j^{n}+O(\widetilde{w}_j^{n+1}).  \nonumber 
\end{eqnarray}
Note that here we again used the assumption of the induction on the coboundary \linebreak$\delta\{(U_j, G_j^{(\nu, \mu)})\}$. 
Now it can be said that the system 
\[
\{(U_{jk}, g_{jk}^{(n, m)}(x_j)-t_{jk}^{-n+1}h_{1jk, n, m}-t_{jk}^{-n+1}h_{2jk, n, m}+h_{3jk, n, m})\}
\]
 is an $1$-cocycle which defines the $(n-1, m)$-th Ueda class, which is the trivial one from the assumption of Theorem \ref{main}. 
Therefore it follows from \cite[Lemma 3]{U} that there exists a $0$-cochain $\{(U_j, G^{(n, m)}_j)\}$ such that $\delta\{(U_j, G^{(n, m)}_j)\}$ coincides with the above $1$-cocycle and 
\[
\max_j\sup_{U_j}\left|G_j^{(n, m)}\right|\leq K\cdot \max_{jk}\sup_{U_{jk}}\left|g_{jk}^{(n, m)}(x_j)-t_{jk}^{-n+1}h_{1jk, n, m}-t_{jk}^{-n+1}h_{2jk, n, m}+h_{3jk, n, m}\right| 
\]
holds. 
From the definition of $h_{1jk, n, m}$, $h_{2jk, n, m}$, and $h_{3jk, n, m}$ (and the assumption of the induction), we obtain the following inequalities: 
\begin{eqnarray}
 |h_{1jk, n, m}|
 &\leq&\text{the coefficient of }z_j^m\ \text{in the expansion of }\sum_{\mu=0}^{m-1}\sum_{q=1}^\infty  A_{n, \mu}R^q\cdot z_j^{\mu+q}\nonumber \\
 &\leq&\text{the coefficient of }X^nY^m\ \text{in the expansion of }\frac{RY(A(X, Y)-X)}{1-RY}, \nonumber 
\end{eqnarray}

\begin{eqnarray}
 |h_{2jk, n, m}|&\leq& 
\text{the coefficient of }u_j^nz_j^m \nonumber \\
&&\hskip10mm \text{in the expansion of }\sum_{\nu=2}^{n-1}\sum_{\mu=0}^m\sum_{p=1}^\infty\sum_{q=0}^\infty A_{\nu, \mu}R^{p+q} z_j^{\mu+q}u_j^\nu\cdot A(u_j, z_j)^{p}\nonumber \\
&\leq&\text{the coefficient of }X^nY^m \nonumber \\
&&\hskip10mm \text{in the expansion of }A(X, Y)\sum_{p=1}^\infty R^pA(X, Y)^p\sum_{q=0}^\infty R^qY^q\nonumber \\
 &=&\text{the coefficient of }X^nY^m\text{in the expansion of }\frac{RA(X, Y)^2}{(1-RY)(1-RA(X, Y))}, \nonumber 
\end{eqnarray}

\begin{eqnarray}
 |g^{(n, m)}_{jk}+h_{3jk, n, m}|&\leq&
\text{the coefficient of }u_j^nz_j^m \nonumber \\
&&\hskip10mm \text{in the expansion of }\sum_{\nu=2}^{\infty}\sum_{\mu=0}^{\infty}MR^{\nu+\mu}z_j^\mu A(u_j, z_j)^\nu\nonumber \\
 &\leq&\text{the coefficient of }X^nY^m \nonumber \\
&&\hskip10mm \text{in the expansion of }M\sum_{\nu=2}^{\infty}R^\nu A(X, Y)^\nu\sum_{\mu=0}^{\infty}R^{\mu}Y^\mu\nonumber \\
 &=&\text{the coefficient of }X^nY^m \nonumber \\
&&\hskip10mm\text{in the expansion of } \frac{MR^2A(X, Y)^2}{(1-RY)(1-RA(X, Y))}. \nonumber 
\end{eqnarray}

Note that, strictly speaking, here we have to modify the constants $R$ and $M$ (see Remark \ref{rmk:1}). 
From the above three inequalities, it follows that $\max_j\sup_{U_j}|G_j^{(n, m)}|$ is less than or equal to the coefficient of $X^nY^m$ in the expansion of 
\[
  \frac{RK}{1-RY}\left((A(X, Y)-X)Y+\frac{(1+MR)A(X, Y)^2}{1-RA(X, Y)}\right), 
\]
which is equal to $A_{n, m}$ from the functional equation $(\ref{eq:funceq_A})$. 
\end{proof}

Let $\{G^{(n, m)}_j\}$ be that in Lemma \ref{lem:newsystem_2_torsion} and 
consider the function $F^{(n)}_j=\sum_{m=0}^\infty G^{(n, m)}_jz_j^m$. 
From Lemma \ref{lem:newsystem_2_torsion}, it can be said that $\sum_{n=2}^\infty F^{(n)}_ju_j^n$ is a holomorphic function around $C$. 
Let $\widetilde{w}_j$ be the solution of the functional equation $(\ref{eq:eqfunc11})$. 
Then, it follows from the same argument as in the proof of Lemma \ref{lem:newsystem_2_torsion} that $t_{jk}\widetilde{w}_k-\widetilde{w}_j\equiv 0$ holds for each $j$ and $k$, which shows Theorem \ref{main} when $(i)$ holds. 

\subsection{Proof of Theorem \ref{main} $(ii)$}

Next we prove Theorem \ref{main} in the case where the condition $(ii)$ holds. 
By considering the proof when $(i)$ holds with replacing the functional equation $(\ref{eq:funceq_A})$ with 
\begin{equation}\label{eq:funceq_A_2}
\sum_{\nu=2}^\infty\sum_{\mu=0}^\infty\varepsilon_{\nu+\mu-1}^{-1}A_{\nu, \mu}X^\nu Y^\mu=\frac{RK}{1-RY}\left((A(X, Y)-X)Y+\frac{(1+MR)A(X, Y)^2}{1-RA(X, Y)}\right)
 \end{equation}
and using Lemma \ref{lem:ueda_4} instead of  \cite[Lemma 3]{U}, 
it can be said that the proof of Theorem \ref{main} in this case is reduced to show that the formal power series solution $A(X, Y)=X+O(X^2)$ of the functional equation $(\ref{eq:funceq_A_2})$ is a holomorphic function around $X=Y=0$.  
Note that, by enlarging $K$, we can replace the functional equation $(\ref{eq:funceq_A_2})$ with
  \begin{equation}\label{eq:funceq_A_3}
 \sum_{\nu=2}^\infty\sum_{\mu=0}^\infty\varepsilon_{\nu+\mu-1}^{-1}A_{\nu, \mu}X^\nu Y^\mu=\frac{K}{1-RY}\left((A(X, Y)-X)Y+\frac{A(X, Y)^2}{1-RA(X, Y)}\right). 
 \end{equation}
From now on, we will prove that the formal solution $A(X, Y)=X+O(X^2)$ of the functional equation $(\ref{eq:funceq_A_3})$ is a holomorphic function around $X=Y=0$.  

Note that, if the formal power series $A(X, X)$ has a positive radius of convergence, then $A(X, Y)$ is a holomorphic function around $X=Y=0$.  
This fact immediately follows from the fact that each coefficient $A_{n m}$ of $A(X, Y)$ is a non-negative real number and thus the inequality 
  \[
  A_{n, m}\leq\hskip-5mm\sum_{0\leq \nu, \mu, \nu+\mu=n+m}\hskip-5mmA_{\nu, \mu}=\widetilde{A}_{n+m}:=\text{the coefficient of }X^{n+m} \text{ in the expansion of }A(X, X)
 \]
holds. Therefore, all we have to do is to show that $A(X, X)$ has a positive radius of convergence. 
 For this purpose, consider a formal power series $B(X)=X+\sum_{n=2}^\infty B_nX^n$ defined by 
 \[
  \sum_{n=2}^\infty \varepsilon_{n-1}^{-1}B_nX^n=\frac{2KB(X)^2}{(1-RB(X))^2}. 
 \]
As it follows from Lemma \ref{lem:siegel} that $B(X)$ has a positive radius of convergence, 
it is sufficient to show that the inequality $\widetilde{A}_n\leq B_n$ holds for each $n\geq 2$. 
We will prove it by induction. 
Since $\widetilde{A}_2=\varepsilon_1K$ and $B_2=2\varepsilon_1K$ hold, 
it is clear that the inequality $\widetilde{A}_n\leq B_n$ holds for $n=2$. 
Next, let us assume that $\widetilde{A}_\nu\leq B_\nu$ holds for each $\nu<n$. 
For it follows from the equation $(\ref{eq:funceq_A_3})$ that
   \[
  \sum_{n=2}^\infty\varepsilon_{n-1}^{-1}\widetilde{A}_nX^n=\frac{K}{1-RX}\left((A(X, X)-X)X+\frac{A(X, X)^2}{1-RA(X, X)}\right), 
 \]
by using the assumption of the induction, it is reduced to showing the inequality
 \begin{eqnarray}
&&\text{the coefficient of }X^n\text{ in the expansion of }
  \frac{K}{1-RX}\left((B(X)-X)X+\frac{B(X)^2}{1-RB(X)}\right)\nonumber \\
&\leq&\text{the coefficient of }X^n\text{ in the expansion of }
\frac{2KB(X)^2}{(1-RB(X))^2}, \nonumber
 \end{eqnarray}
which is easily obtained from the fact that the coefficients of ${B}(X)-X$ and $X$ are less than or equal to that of ${B}(X)$. 
Now we have proven that $A(X, X)$ has a positive radius of convergence. 
Therefore we can construct a system $\{(W_j, \widetilde{w}_j)\}$ with $t_{jk}\widetilde{w}_k\equiv\widetilde{w}_j$ by solving the functional equation $(\ref{eq:eqfunc11})$, which completes the proof of Theorem \ref{main} in the case where the condition $(ii)$ holds. 

\subsection{Proof of Theorem \ref{main} $(iii)$}

Finally we prove Theorem \ref{main} in the case where the condition $(iii)$ holds. 
In this case, we can apply \ref{prop:Rossi} and thus we may assume that the neighborhood $V$ satisfies the following property: for each $n\geq 1$, there exists an integer $N(N_{S/X}|_V^{-n})$ such that the natural map $H^1(V, N_{S/X}|_V^{-n})\to H^1(V, N_{S/X}|_V^{-n}\otimes\mathcal{O}_{V}/I_C^m)$ is injective for each integers $n\geq 1$ and $m\geq N(N_{S/X}|_V^{-n})$, where $I_C$ is the defining ideal sheaf of $C\subset V$. 
Fix a relatively compact open domain $V_0\subset V$ which contains $C$. 

Fix a system $\{(W_j, w_j)\}$ of order $(1, 0)$ and consider again the functional equation $(\ref{eq:eqfunc11})$. 
Let $M$ and $R$ be sufficiently large positive number as in the proof of Theorem \ref{main} in the case where $(i)$ (or $(ii)$) holds. 
Fix a positive real number $K$ and consider the solution $A(X)$ of the functional equation 
 \begin{equation}\label{eq:funceq_A_3}
  A(X)-X=\frac{KR(1+MR)A(X)^2}{1-RA(X)}
 \end{equation}
 and denote by $A_{n}$ the coefficient of $X^n$ in the Taylor expansion of $A(X)$ at $X=0$. 
Though the functional equation $(\ref{eq:funceq_A_3})$ has two solutions, the solution $A$ is uniquely determined by the condition that $A(X)=X+O(X^2)$. 
Note that $A_{n}$ is a non-negative real number for each $n$. 

\begin{lemma}\label{lem:newsystem_2_torsion_3}
When $K$ is sufficiently large, 
there exists a system of functions $\{F_j^{(n)}(x_j, z_j)\}_{n\geq 2}$ for each $j$ satisfying the following conditions. 
Let 
\begin{eqnarray}
F_j^{(n)}(x_j, z_j)&=&F_j^{(n)}(x_j(x_k, z_k, w_k), z_j(x_k, z_k, w_k)) \nonumber \\
&=&F_j^{(n)}(x_j(x_k, z_k, 0), z_j(x_k, z_k, 0))
+\sum_{p=1}^\infty F_{jk, p}^{(n)}(x_k, z_k)\cdot w_k^{p}\nonumber
\end{eqnarray}
be the expansion of $F_j^{(n)}|_{W_{jk}}$ in the variable $w_k$ by regarding $F_j^{(n)}$ as a function defined on $W_{j}$ which does not depend on $w_j$. 
Denote by $h_{2jk, n}$ the coefficient of $u_j^n$ in the expansion of 
\[
\sum_{\nu=2}^{n-1}\sum_{p=1}^\infty F_{kj, p}^{(\nu)}(x_j, z_j)\cdot u_j^\nu\cdot\left(u_j+\sum_{a=2}^{n-1}F_j^{(a)}(x_j, z_j)\cdot u_j^a\right)^{p}
\]
and by $h_{3jk, n}$ the coefficient of $z_j^mu_j^n$ in the expansion of 
\[
\sum_{\nu=2}^{\infty}f_{jk}^{(\nu)}(x_j, z_j)\cdot\left(u_j+\sum_{p=2}^{n-1}F_j^{(p)}(x_j, z_k)\cdot u_j^p\right)^\nu
-\sum_{\nu=2}^{n}f_{jk}^{(\nu)}(x_j, z_j)\cdot u_j^\nu, 
\]
where $f_{jk}^{(\nu)}$ is that in the expansion $(\ref{eq:thm_tenkai})$. 
Then the coboundary $\delta\{(V_j, F_j^{(n)})\}$ is equal to 
\[
\{(V_{jk}, f_{jk}^{(n)}-t_{jk}^{-n+1}h_{2jk, n}+h_{3jk, n})\}, 
\]
 and 
$\max_j\sup_{V_0\cap V_j}\left|F_j^{(n)}\right|\leq A_{n}$ for each $n\geq 1$. 
\end{lemma}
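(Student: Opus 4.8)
The plan is to construct the functions $F_j^{(n)}$ by induction on $n\ge 2$, following the scheme of the proof of Lemma \ref{lem:newsystem_2_torsion}, with three changes: we now work with honest holomorphic functions on the neighbourhoods $V_j$ of $U_j$ (not merely with their $z_j$-expansions); we invoke Lemma \ref{lem:KS_2} in place of \cite[Lemma 3]{U} to solve the coboundary equations with a bound on the fixed domain $V_0$; and we use Rossi's theorem (Proposition \ref{prop:Rossi}) to replace "all the Ueda classes vanish" by "a single class on $V$ vanishes", which is what makes Lemma \ref{lem:KS_2} applicable on the non-compact $V$ at each stage. As a preliminary, since $N_{S/X}|_C\in\mathcal{E}_0(C)$ is torsion, after shrinking $V$ the powers $N_{S/X}|_V^{-n}$, $n\ge 0$, run through only finitely many flat line bundles on $V$; we take $K$ larger than $\sup_{n\ge 0}K(N_{S/X}|_V^{-n})$, where $K(E)$ is the constant attached by Lemma \ref{lem:KS_2} to a flat bundle $E$ on $V$ (and large enough for the base case below). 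This is the only place where the $\mathcal{E}_0$-hypothesis on $N_{S/X}|_C$ enters.

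For the base case $n=2$, take $\widetilde{w}_j:=w_j$; then the cocycle $\{(V_{jk},f_{jk}^{(2)})\}$ from the expansion $(\ref{eq:thm_tenkai})$ represents the first Ueda class of the pair $(V,W)$ in $H^1(V,N_{S/X}|_V^{-1})$. Its image in $H^1(V,N_{S/X}|_V^{-1}\otimes\mathcal{O}_V/I_C^m)$ is governed by the classes $u_{1,\mu}(C,S,X)$ with $\mu<m$, all of which vanish by hypothesis, so that image is zero for every $m$; by Proposition \ref{prop:Rossi} the class in $H^1(V,N_{S/X}|_V^{-1})$ itself vanishes, hence $\{(V_{jk},f_{jk}^{(2)})\}$ is a coboundary. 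Lemma \ref{lem:KS_2} then yields $F_j^{(2)}$ with $\delta\{(V_j,F_j^{(2)})\}=\{(V_{jk},f_{jk}^{(2)})\}$ and $\max_j\sup_{V_0\cap V_j}|F_j^{(2)}|\le K\cdot\max_{jk}\sup_{V_0\cap V_{jk}}|f_{jk}^{(2)}|\le A_2$, the last step using Cauchy's inequality ($|f_{jk}^{(2)}|\lesssim MR^2$) and $A_2=KR(1+MR)$.

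For the inductive step, assume $F_j^{(\nu)}$ has been constructed for $2\le\nu<n$ with $\sup_{V_0\cap V_j}|F_j^{(\nu)}|\le A_\nu$ and the asserted coboundary identity. Form the partial system $\widetilde{w}_j$ determined by $w_j=u_j+\sum_{\nu=2}^{n-1}F_j^{(\nu)}(x_j,z_j)u_j^\nu$, $\widetilde{w}_j:=u_j$, and check by the same sub-induction on the order as in Lemma \ref{lem:newsystem_2_torsion}, using the coboundary identities for the $F_j^{(\nu)}$, that $\{(W_j,\widetilde{w}_j)\}$ is of order $(n-1,0)$. Expanding $t_{jk}w_k$ in the two ways of that proof gives $t_{jk}\widetilde{w}_k-\widetilde{w}_j=(f_{jk}^{(n)}-t_{jk}^{-n+1}h_{2jk,n}+h_{3jk,n})\widetilde{w}_j^n+O(\widetilde{w}_j^{n+1})$, and the bracketed cocycle represents the $(n-1)$-st Ueda class of $(V,W)$ in $H^1(V,N_{S/X}|_V^{-n+1})$; exactly as in the base case, the vanishing of all $u_{n-1,\mu}(C,S,X)$ together with Proposition \ref{prop:Rossi} forces this class to be zero, so Lemma \ref{lem:KS_2} provides $F_j^{(n)}$ with the stated coboundary identity and $\max_j\sup_{V_0\cap V_j}|F_j^{(n)}|\le K\cdot\max_{jk}\sup_{V_0\cap V_{jk}}|f_{jk}^{(n)}-t_{jk}^{-n+1}h_{2jk,n}+h_{3jk,n}|$. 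Finally, estimating the right-hand side by Cauchy's inequalities (with $|F_{kj,p}^{(\nu)}|\le A_\nu R^p$ and $|f_{jk}^{(\nu)}|\lesssim MR^{\nu}$, on a slightly enlarged cover as in Remark \ref{rmk:1}), the $h_{2jk,n}$-term is dominated by the $X^n$-coefficient of $\frac{RA(X)^2}{1-RA(X)}$ and the $f_{jk}^{(n)}+h_{3jk,n}$-term by that of $\frac{MR^2A(X)^2}{1-RA(X)}$, so $\max_j\sup_{V_0\cap V_j}|F_j^{(n)}|$ is at most the $X^n$-coefficient of $\frac{KR(1+MR)A(X)^2}{1-RA(X)}=A(X)-X$, i.e. $\le A_n$. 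This closes the induction.

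The main obstacle is the bookkeeping of the inductive step: confirming that the partially summed system is genuinely of order $(n-1,0)$, tracking the dependence of $h_{2jk,n}$ and $h_{3jk,n}$ on the previously chosen data, and arranging the Cauchy estimates so that the resulting majorant is precisely the one governed by the functional equation $(\ref{eq:funceq_A_3})$. The one conceptually new ingredient relative to cases $(i)$ and $(ii)$, and the reason condition $(iii)$ suffices, is the use of Rossi's theorem to convert the vanishing of the whole family of Ueda classes into the vanishing of a single cohomology class on $V$, which is what permits Lemma \ref{lem:KS_2} to be applied on the non-compact neighbourhood $V$ at each stage.
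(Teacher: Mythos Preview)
Your proof is correct and follows essentially the same approach as the paper's: inductive construction of the $F_j^{(n)}$, using Rossi's theorem (Proposition \ref{prop:Rossi}) to conclude that the relevant cocycle is a coboundary on $V$, then Lemma \ref{lem:KS_2} to solve with bounds, and majorant estimates governed by the functional equation (\ref{eq:funceq_A_3}). You supply more detail than the paper (which simply defers to the argument in \cite{U}), in particular making explicit that the torsion hypothesis $N_{S/X}|_C\in\mathcal{E}_0(C)$ is what guarantees a uniform constant $K$ over all $n$.
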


\begin{proof}
Set $F_j^{(1)}(x_j, z_j):=1$. 
We construct the system $\{F^{(\nu)}_j\}$ inductively. 
Assume that there exists a system $\{F^{(\nu)}_j\}$ as in Lemma \ref{lem:newsystem_2_torsion_3} for each $\nu<n$. 
Let $\widetilde{w}_j$ be the solution of the new functional equation 
\[
w_j=\sum_{\nu=1}^{n-1} F_j^{(\nu)}(x_j, z_j)\cdot u_j^{\nu}. 
\]
From just the same argument as in the proof of Theorem \ref{thm:ueda_3} in \cite{U}, it follows that the new system $\{(W_j, \widetilde{w}_j)\}$ is of order $(n-1, 0)$ and 
\[
\frac{1}{n}(\widetilde{w}_j^{-(n-1)}-t_{jk}^{-(n-1)}\widetilde{w}_k^{-(n-1)})|_{V_{jk}}=f_{jk}^{(n)}-t_{jk}^{-n+1}h_{2jk, n}+h_{3jk, n}
\]
holds. 
From the assumption of Theorem \ref{main}, it holds for each integer $m$ that 
the induced cohomology class $[\{(V_{jk}, f_{jk}^{(n)}-t_{jk}^{-n+1}h_{2jk, n}+h_{3jk, n})\}]_m\in H^1(V, N_{S/X}|_V^{-(n-1)}\otimes \mathcal{O}_V/I_C^m)$ is the trivial one. 
By considering this fact especially for $m=N(N_{S/X}|_V^{-(n-1)})$, we can conclude that the cohomology class $[\{(V_{jk}, f_{jk}^{(n)}-t_{jk}^{-n+1}h_{2jk, n}+h_{3jk, n})\}]\in H^1(V, N_{S/X}|_V^{-(n-1)})$ itself is also the trivial one. 
Thus Lemma \ref {lem:newsystem_2_torsion_3} follows from just the same argument as in the proof of Theorem \ref{thm:ueda_3} in \cite{U} by using Lemma \ref{lem:KS_2} instead of \cite[Lemma 3]{U}. 
\end{proof}

Let $\{F^{(n)}_j\}$ be that in Lemma \ref{lem:newsystem_2_torsion_3}. 
Then, it can be said that $\sum_{n=2}^\infty F^{(n)}_ju_j^n$ is a holomorphic function around $V_0$. 
Let $\widetilde{w}_j$ be the solution of the functional equation $(\ref{eq:eqfunc11})$. 
Then, it follows from the same argument as in the proof of Theorem \ref{thm:ueda_3} in \cite{U} that $t_{jk}\widetilde{w}_k-\widetilde{w}_j\equiv 0$ holds for each $j$ and $k$, which shows Theorem \ref{main} when $(iii)$ holds. 

\section{Some examples and proof of Corollary \ref{main_cor}}

\subsection{$\mathbb{P}^1$-bundle examples}

\begin{example}\label{eg:1mu}
Let $S_0$ be a complex manifold and $C_0\subset S_0$ be a smooth compact K\"ahler hypersurface with topologically trivial normal bundle. 
Assume that the line bundle $\mathcal{O}_{S_0}(C_0)$ is flat around $C_0$ 
(it follows from Theorem \ref{thm:ueda_3} that it is sufficient to assume that  $N_{C_0/S_0}\in\mathcal{E}_0(C_0)\cup\mathcal{E}_1(C_0)$ and ${\rm type}\,(C_0, S_0)=\infty$). 
 Fix an extension of the trivial line bundle $\mathcal{O}_{S_0}$ by $\mathcal{O}_{S_0}(C_0)$: 
 \begin{equation}\label{exs:ext}
  0\to \mathcal{O}_{S_0}(C_0)\to E\to \mathcal{O}_{S_0}\to 0\ \text{: exact}. 
 \end{equation}
Define $X:=\mathbb{P}(E)$. 
Denote by $S$ the section of $X\to S_0$ and by $C$ the submanifold of $S$ isomorphic to $C_0$ via the natural isomorphism $S_0\to S$. 
Fix a sufficiently fine open covering $\{V_{j}\}_j$ of a sufficiently small neighborhood $V$ of $C_0$ in $S_0$. 
Then, by using a local frame $t_j$ of $\mathcal{O}_{S_0}(C_0)$ on $V_j$, a coordinate $x_j$ of $V_j\cap C_0$, and a defining function $z_j$ of $V_j\cap C_0$ in $V_j$, we can define a local coordinates system
\[
(x_j, z_j, w_j):=[(1, w_j\cdot t_j(x_j, z_j))]\in (E^*|_{V_j}\setminus (0\text{-section}))/\mathbb{C}^*\subset X
\]
of $X$. 
Note that we can choose $\{(V_j, t_j)\}$ such that $t_{jk}:=t_k/t_j\in U(1)$ holds for each $j$ and $k$ by shrinking $V$ if necessary. 
Then simple computation shows that there exists a holomorphic function $p_{jk}$ on $V_{jk}$ such that 
\begin{equation}\label{shiki1}
w_j=\frac{t_{jk}^{-1}\cdot w_k}{1+p_{jk}(x_j, z_j)\cdot w_k}
\end{equation}
holds. 
Note that the system $\{(V_{jk}, p_{jk})\}$ is a $1$-cocycle of $\mathcal{O}_{S_0}(C_0)$ and the class $[\{(V_{jk}, p_{jk})\}]\in H^1(V, \mathcal{O}_{S_0}(C_0))$ coincides with the restriction of the extension class of the exact sequence $(\ref{exs:ext})$. 
It immediately follows from $(\ref{shiki1})$ that $dw_j=t_{jk}^{-1}dw_k$ holds, and thus we obtain  $N_{S/X}^{-1}=\mathcal{O}_S(C)$ (i.e. $N_{S/X}|_C=N_{C/X}^{-1}$). 
Moreover, by expanding $(\ref{shiki1})$ in the variable $w_k$, we obtain 
\[
 t_{jk}w_j=w_k-p_{jk}(x_j, z_j)w_k^2+O(w_k^3). 
\]
Thus it can be said that, if the exact sequence $(\ref{exs:ext})$ does not split around $C_0$, 
then there exists an integer $m$ such that ${\rm type}\,(C, S, X)=(1, m)$. 
Moreover, the equation 
\[
 u_{1, \mu}(C, S, X)=\{(U_{jk}, -(p_{kj}/z_j^\mu)|_{U_{jk}})\}\in H^1(C, N_{S/X}|_C^{-1}\otimes N_{C/S}^{-\mu})
\]
holds for each integer $\mu$ less than $\min_{jk}{\rm mult}_{U_{jk}}p_{jk}$, where $U_{jk}:=V_{jk}\cap C_0$. 

On the other hand, when $(\ref{exs:ext})$ splits, we may assume $p_{jk}\equiv 0$ for each $j, k$ and thus we obtain $t_{jk}w_j=w_k$ for each $j, k$, which prove that ${\rm type}\,(C, S, X)=\infty$. 
\end{example}

\begin{example}
Let $C_1, C_2\subset \mathbb{P}^2$ be two smooth elliptic curves which intersects at $9$ points $p_1, p_2, \dots, p_9$ transversally. 
Denote by $S_0$ the blow-up of $\mathbb{P}^2$ at these points $p_1, p_2, \dots, p_9$, and by $C_0$ the strict transform of $C_1$. 
Note that $S_0$ has a structure of an elliptic surface and $C_0$ is a fiber of $S_0$, 
and thus $\mathcal{O}_{S_0}(C_0)$ is trivial around $C_0$. 
Take an extension $E$ of $\mathcal{O}_{S_0}$ by $\mathcal{O}_{S_0}(C_0)$ as follows. 
First consider the short exact sequence 
\[
 0\to \mathcal{O}_{S_0}\to \mathcal{O}_{S_0}(C_0)\to i^*\mathcal{O}_{C_0}\to 0
\]
obtained from the fact that $\mathcal{O}_{S_0}(C_0)|_{C_0}
=\mathcal{O}_{C_0}$, where $i\colon C_0\to S_0$ is the inclusion. 
Let 
\[
 H^1(S_0, \mathcal{O}_{S_0}(C_0))\to H^1(C_0, \mathcal{O}_{C_0})\to H^2(S_0, \mathcal{O}_{S_0})=0
\]
be the sequence induced from the above short exact sequence. 
From this exact sequence, it follows that there exists a non-trivial element $\xi\in H^1(S_0, \mathcal{O}_{S_0}(C_0))$. 
Let 
\[
  0\to \mathcal{O}_{S_0}(2C_0)\to E\to \mathcal{O}_{S_0}\to 0
\]
be an extension corresponds to the class 
$f_{C_0}\cdot \xi\in H^1(S_0, \mathcal{O}_{S_0}(2C_0))={\rm Ext}^1(\mathcal{O}_{S_0}, \mathcal{O}_{S_0}(2C_0))$, where $f_{C_0}\in H^0(S_0, \mathcal{O}_{S_0})$ is a canonical section. 
Define $X:=\mathbb{P}(E)$. 
Denote by $S$ the section of $X\to S_0$ and by $C\subset S$ the curve corresponds to $C_0$ via $S_0\to S$. 
Then it follows from the same argument as in Example \ref{eg:1mu} that $u_{1, 0}(C, S, X)=0$ and $u_{1, 1}(C, S, X)=\xi\not=0$, and thus it holds that ${\rm type}\,(C, S, X)=(1, 1)$. 
\end{example}

\subsection{On the blow-up of $\mathbb{P}^3$ at $8$ points}

Corollary \ref{main_cor} can be regarded as an analogue of an application of Ueda's theory on the blow-up of $\mathbb{P}^2$ at $9$ points. 
First we review this result on the blow-up of $\mathbb{P}^2$ at $9$ points. 
Let $p_1, p_2, \dots p_9$ be general $9$ points of $\mathbb{P}^2$. Then there exists an unique elliptic curve $C_0\subset \mathbb{P}^2$ passing through all of these points. Assume that this curve $C_0$ is smooth. 
Denote by $N$ the line bundle $\mathcal{O}_{\mathbb{P}^2}(3)|_{C_0}\otimes\mathcal{O}(-p_1-p_2-\dots-p_9)$. Clearly it can be said that $N$ is an element of ${\rm Pic}^0(C_0)$ and thus $N$ is flat, since $C_0$ is K\"ahler. 
When $N$ is a torsion element, the canonical bundle $K_X^{-1}$ of the blow-up $X$ of $\mathbb{P}^2$ at $\{p_1, p_2\dots, p_9\}$ is semi-ample. 
Especially, in this case, $K_X^{-1}$ is semi-positive: i.e. $K_X^{-1}$ admits a smooth Hermitian metric with semi-positive curvature. 
On the other hand, when $N$ is a non-torsion element, $K_X^{-1}$ is not semi-ample. 
In this case, Brunella showed that $K_X^{-1}$ is semi-positive if and only if $C$ has a pseudoflat neighborhood in $X$, 
where $C$ is the strict transform of $C_0$ \cite{B}. 
As this condition holds if the line bunlde $\mathcal{O}_X(C)$ is flat around $C$, 
it follows from this Brunella's theorem and Theorem \ref{thm:ueda_3} that $K_X^{-1}$ is semi-positive if  $N\in \mathcal{E}_1(C_0)$. 

\begin{table}[tbh]
\begin{center}
  \begin{tabular}{|c||c|c|}\hline
     & $N$ : torsion & $N$ : non-torsion  \\ \hline
  the base locus $\mathbb{B}(K_X^{-1})$ & $\emptyset$ or $C$  & $C$   \\ \hline
  semi-ampleness &  semi-ample & not semi-ample   \\ \hline
  Iitaka dimension & 1 & 0   \\ \hline
  \end{tabular}
  \caption{Properties of the anti-canonical bundle of the blow-up $X$ of $\mathbb{P}^2$ at $9$ points}
  \label{table:p2_9}
  \end{center}
\end{table}

Now let us start considering the blow-up of $\mathbb{P}^3$ at $8$ points. 
Fix general $8$ points $p_1, p_2, \dots, p_8$ of $\mathbb{P}^3$. 
Then there exits an $1$-dimensional family of quadric surfaces of $\mathbb{P}^3$ passing through these points $p_1, p_2, \dots, p_8$. 
Fix such quadric surfaces $Q_0$ and $Q_\infty$ of $\mathbb{P}^3$. 
Assume that $Q_0$ and $Q_\infty$ intersect each other transversally along $C_0:=Q_0\cap Q_\infty$. 
Note that $C_0$ is a smooth elliptic curve and $\mathcal{O}_{Q_0}(C_0)=K_{Q_0}^{-1}$ holds in this case. 
Denote by $X$ the blow-up of $\mathbb{P}^3$ at $\{p_1, p_2\dots, p_8\}$, by $C$ the strict transform of $C_0$, and by $S_0$ (resp. $S_\infty$) the strict transform of $Q_0$ (resp. $Q_\infty$). 
Note that $K_{X}^{-1}=\mathcal{O}_X(2S_0)=\mathcal{O}_X(2S_\infty)$, 
$N_{S_0/X}=\mathcal{O}_{S_0}(C)$, and that $N_{C/S_0}$ is isomorphic to $N:=\mathcal{O}_{\mathbb{P}^3}(2)|_{C_0}\otimes\mathcal{O}(-p_1-p_2-\dots -p_8)$ via the natural isomorphism $C\to C_0$. 
When $N\in{\rm Pic}^0(C)$ is a torsion element, $K_X^{-1}$ is semi-ample,  
and thus it is semi-positive. 
On the other hand, when $N$ is a non-torsion element, the base locus $\mathbb{B}(K_X^{-m})$ is equal to $C$ for each $m\geq 1$ and thus $K_X^{-1}$ is not semi-ample. 

\begin{table}[tbh]
\begin{center}
  \begin{tabular}{|c||c|c|}\hline
     & $N$ : torsion & $N$ : non-torsion  \\ \hline
  the base locus $\mathbb{B}(K_X^{-1})$ & $\emptyset$ or $C$  & $C$   \\ \hline
  semi-ampleness &  semi-ample & not semi-ample   \\ \hline
  Iitaka dimension & 2 & 1   \\ \hline
  \end{tabular}
  \caption{Properties of the anti-canonical bundle of the blow-up $X$ of $\mathbb{P}^3$ at $8$ points}
  \label{table:p3_8}
  \end{center}
\end{table}

\begin{proof}[Proof of Corollary \ref{main_cor}]
We apply Theorem \ref{main} on the triple $(C, S_0, X)$. 
For this purpose, we show that $N_{S_0/X}=\mathcal{O}_X(S_0)|_{S_0}=\mathcal{O}_X(S_\infty)|_{S_0}=\mathcal{O}_{S_0}(C)$ is flat on a neighborhood of $C$ in $S_0$ by applying Ueda's theory on the pair $(C, S_0)$. 
From the assumption, $N_{C/S_0}$ is an element of $\mathcal{E}_1(C)$. 
Note that $H^1(C, N_{C/S_0}^{-n})=0$ holds for $n\geq 1$, since $C$ is an elliptic curve and $N_{C/S_0}$ is non-torsion. 
Thus $u_n(C, S_0)=0$ holds for all $n\geq 1$ and then it follows from Theorem \ref{thm:ueda_3} that 
$N_{S_0/X}=\mathcal{O}_{S_0}(C)$ is flat on a neighborhood of $C$ in $S_0$. 

As the triple $(C, S_0, X)$ enjoys the condition $(ii)$ in Theorem \ref{main} and it follows from just the same argument on the cohomology classes as above that $u_{n, m}(C, S_0, X)=0$, we can apply Theorem \ref{main} to conclude that $\mathcal{O}_X(S_0)$ is flat on a neighborhood $W$ of $C$ in $X$. 
As a conclusion, it can be said there exists a flat metric $h_1$ on the line bundle $K_X^{-1}|_W$.  

Let $f_0\in H^0(X, S_0)$ and $f_\infty\in H^0(X, S_\infty)$ be canonical sections. 
Denote by $h_2$ the singular Hermitian metric defined by two sections $f_0^2, f_\infty^2\in H^0(X, K_X^{-1})$: $h_2=(|f_0|^2+|f_\infty|^2)^{-1}$. 
Clearly $h_2$ has a semi-positive curvature current on the whole $X$ and a singularity along $C$. 
We can construct a smooth Hermitian metric on $K_X^{-1}$ with semi-positive curvature by taking the ``regularized minimum'' of two metrics $M\cdot h_1$ and $h_2$, where $M$ is a sufficiently large real number (see the proof of \cite[3.4]{K2} for the precise meaning of the ``regularized minimum'' of the singular Hermitian metrics). 
\end{proof}


\end{document}